\definecolor{darkspringgreen}{rgb}{0., 0.55, 0.3}
\definecolor{dartmouthgreen}{rgb}{0.05, 0.5, 0.06}
\definecolor{etonblue}{rgb}{0.59, 0.78, 0.64}
\definecolor{airforceblue}{rgb}{0., 0.4, 0.66}
\definecolor{arylideyellow}{rgb}{0.91, 0.84, 0.42}
\definecolor{emerald}{rgb}{0.31, 0.78, 0.47}
\definecolor{uclagold}{rgb}{1.0, 0.7, 0.0}
\definecolor{cadmiumorange}{rgb}{0.93, 0.53, 0.18}
\theoremstyle{thmstyleone}
\newtheorem{theorem}{Theorem}
\theoremstyle{thmstyletwo}
\newtheorem{remark}{Remark}
\theoremstyle{thmstylethree}
\newcommand{\lopd}[0]{\mathcal{L}_\Delta}
\newcommand{\lopdt}[0]{\mathcal{L}_{\Delta}}
\newcommand{\usol}[0]{\underline{\uvec{u}}_\Delta}
\newcommand{\uvec}[2][3]{\boldsymbol{#2\mkern-#1mu}\mkern#1mu}
\newcommand\norm[1]{\left\lVert#1\right\rVert}
\newcommand\abs[1]{\left\lvert#1\right\rvert}
\newcommand{\R}{\mathbb{R}}
\newcommand{\bu}{\uvec{u}}
\newcommand{\bbu}{\underline{\uvec{u}}}
\newcommand{\undu}{\underline{\uvec{u}}}
\newcommand{\undr}{\underline{\uvec{r}}}
\newcommand{\uex}[0]{\underline{\uvec{u}}_{ex}}
\newcommand{\Temp}{T}
\newcommand{\E}{\boldsymbol{E}}
\newcommand{\lSSP}{\ell \text{SSP}}
\newcommand{\bF}{\uvec{f}}
\newcommand{\diff}[1]{{\mathrm{d}{#1}}}
\newcommand{\ubar}{\overline{\uvec{u}}}
\newcommand{\xip}{x_{i+\frac{1}{2}}}
\newcommand{\xin}{x_{i-\frac{1}{2}}}
\newcommand{\yjp}{y_{j+\frac{1}{2}}}
\newcommand{\yjn}{y_{j-\frac{1}{2}}}
\newcommand{\iip}{i+\frac{1}{2}}
\newcommand{\iin}{i-\frac{1}{2}}
\newcommand{\jjp}{j+\frac{1}{2}}
\newcommand{\jjn}{j-\frac{1}{2}}
\newcommand{\jph}{{j+\frac{1}{2}}}
\newcommand{\jmh}{{j-\frac{1}{2}}}
\newcommand{\iph}{{i+\frac{1}{2}}}
\newcommand{\imh}{{i-\frac{1}{2}}}
\newcommand{\qbar}{\overline{q}}
\begin{document}
\author{Lorenzo Micalizzi\footnote{Affiliation: Department of Mathematics, North Carolina State University, SAS Hall, 2108, 2311 Stinson Dr, Raleigh, NC 27607, United States. Email: lmicali@ncsu.edu}, and Eleuterio F. Toro\footnote{Affiliation: Laboratory of Applied Mathematics, DICAM, University of Trento, Via Mesiano 77, 38123 Trento, Italy. Email: eleuterio.toro@unitn.it}}
\title{Impact of Numerical Fluxes on High Order Semidiscrete WENO--DeC Finite Volume Schemes}

\maketitle

\abstract{
	
	The numerical flux determines the performance of numerical methods for solving hyperbolic partial differential equations (PDEs). 
	In this work, we compare a selection of 8 numerical fluxes in the framework of nonlinear semidiscrete finite volume (FV) schemes, based on Weighted Essentially Non--Oscillatory (WENO) spatial reconstruction and Deferred Correction (DeC) time discretization. 
	The methodology is implemented and systematically assessed for order of accuracy in space and time up to seven. 
	The numerical fluxes selected in the present study represent the two existing classes of fluxes, namely centred and upwind. Centred fluxes do not explicitly use wave propagation information, while, upwind fluxes do so from the solution of the Riemann problem via a wave model containing $A$ waves. Upwind fluxes include two subclasses: complete and incomplete fluxes. For complete upwind fluxes,  $A = E$, where $E$ is the number of characteristic fields in the exact problem.  For incomplete upwind ones,  $A < E$.
	Our study is conducted for the one-- and two--dimensional Euler equations, for which we consider the following numerical fluxes: Lax--Friedrichs (LxF), First--Order Centred (FORCE), Rusanov (Rus), Harten--Lax--van Leer (HLL), Central--Upwind (CU), Low--Dissipation Central--Upwind (LDCU), HLLC, and the flux computed through the exact Riemann solver (Ex.RS).
	
	We find that the numerical flux has an effect on the performance of the methods. The magnitude of the effect depends on the type of numerical flux and on the order of accuracy of the scheme. It also depends on the type of problem; that is, whether the solution is smooth or discontinuous, whether discontinuities are linear or nonlinear, whether linear discontinuities are fast-- or slowly--moving, and whether the solution is evolved for short or long time. 
	For the special case of smooth solutions, the expected convergence rates are attained for all fluxes and all orders. However, errors are still larger for the simpler fluxes, though differences diminish as the order of accuracy increases. 
	For all selected cases involving discontinuities, differences among fluxes arise for all orders of accuracy considered. Moreover, there are flow situations for which the differences are huge, independently of the order of accuracy of the scheme. 
	The best fluxes are the complete upwind ones. The difference between the best centred flux, FORCE, and incomplete upwind ones is not dramatic, which constitutes and advantage for good centred methods due to their simplicity and generality.
	
}

\section{Introduction}

The vast majority of numerical schemes for solving hyperbolic partial differential equations (PDEs) is based on a discrete representation of their underlying principle: the rate of change in time of some quantities inside a given spatial region is given by what crosses the surface of the same region, expressed by the (normal) flux function, plus what is generated/dissipated within the region, expressed by the source function (if present).
Therefore,  a numerical method will require corresponding expressions for the numerical flux and the numerical source.
Historically, Godunov~\cite{Godunov} is credited for having proposed a numerical flux as an integral average of the physical flux evaluated at the solution of the Riemann problem at the interface between the elements of a tessellation of the spatial domain. The resulting Godunov upwind method is a conservative generalization of  the CIR scheme, first presented in~\cite{courant1952solution} by Courant, Isaacson and Rees. 
Even after six decades, the design of numerical fluxes remains a fundamental task in the construction of finite volume  (FV)~\cite{hirsch2007numerical,ToroBook,leveque2002finite,godlewski2021numerical,toro2024computational}, finite difference~\cite{leveque2007finite} and Discontinuous Galerkin (DG) finite element methods~\cite{reed1973triangular,cockburn2000development,cockburn2001runge}.
This is so, both in the frameworks of semidiscrete and fully--discrete schemes.
Apart from the basic properties of consistency and Lipschitz--continuity, the scheme designer aims for generous stability properties, monotonicity (for the scalar case),  minimal numerical diffusion and efficiency~\cite{ToroBook}.  It is known that for the scalar case the Godunov upwind method is the scheme with the smallest local truncation error, within the class of monotone schemes.  

The largely pending challenge is to design numerical fluxes with desirable properties for solving nonlinear systems in multiple space dimensions. 
Over the last few decades, many numerical fluxes with different properties have been put forward,  which then prompts a relevant and natural question: 
among the available fluxes in the current literature,  which ones achieve the optimal balance between accuracy and computational cost?  
At the first order level the current literature provides useful, even if not exhaustive, information on the performance of various numerical fluxes~\cite{hirsch2007numerical,ToroBook,leveque2002finite,godlewski2021numerical,toro2024computational}. 
Much less is known in the setting of higher order (in space and time) numerical methods. 
A reasonable expectation is that the adoption of a high order discretization could compensate for the deficiency of less accurate numerical fluxes.

Despite being the numerical flux a crucial element, the design of numerical schemes for hyperbolic PDEs requires the definition of other important components which depend on the specific discretization framework adopted.
Broadly speaking, there are two major discretization frameworks, namely the 
semidiscrete (or method of lines) and the fully--discrete approaches. 
In the former setting, the discretization in space is separated from the one in time.  Given a discretization in space, the problem remains continuous in time as a system of ordinary differential equations (ODEs). In principle, any ODEs solver can then be deployed to complete the scheme. 
Instead, in the fully--discrete setting, the discretizations in space and time are intertwined, fully coupled and simultaneously designed in a single step.
In the high order semidiscrete FV framework, one needs a suitable spatial reconstruction and a suitable time--stepping strategy.
The spatial reconstruction must be nonlinear, so as to prevent or reduce spurious oscillations in the vicinity of  discontinuities or large gradients (even in smooth problems). 
The need for the nonlinear character of the spatial reconstruction emerges from Godunov's theorem~\cite{Godunov} as a necessary condition for a monotone scheme; see~\cite{ToroBook} for statement and proof.

In this paper, we study the performance of 8 existing numerical fluxes in the setting of an arbitrary high order semidiscrete FV framework, comprising Weighted Essentially Non--Oscillatory (WENO)~\cite{liu1994weighted,shu1998essentially,shu1989efficient} spatial reconstruction and Deferred Correction (DeC)~\cite{micalizzi2023new,ciallella2022arbitrary,Decremi,minion2003semi,Decoriginal} time discretization.
%
More in detail, we carry out a systematic comparison of the following numerical fluxes: Lax--Friedrichs (LxF)~\cite{lax1954weak}, First--Order Centred (FORCE)~\cite{Toro1996,toro2000centred,chen2003centred}, Rusanov~\cite{Rusanov1961}~(Rus), Harten--Lax--van Leer (HLL)~\cite{harten1983upstream}, Central--Upwind (CU)~\cite{kurganov2001semidiscrete,kurganov2000new}, Low--Dissipation Central--Upwind (LDCU)~\cite{kurganov2023new}, HLLC~\cite{toro1992restoration,toro1994restoration},  the Godunov flux from the exact Riemann solver~\cite{Godunov}~(Ex.RS). Most of these are thoroughly described in~\cite{ToroBook}.

Both the spatial and time discretizations adopted here are well established.  As a matter of fact, the WENO--DeC approach has already been investigated up to order 5 in space and time; the performance of such a framework has been shown to be very satisfactory in tackling challenging and realistic problems~\cite{ciallella2022arbitrary,ciallella2023arbitrary,ciallella2024high}.
In the present work, we investigate the performance  of WENO--DeC schemes up to order of accuracy 7 in space and time.  Moreover, in addition to the very high order extension, we compare the performance of many numerical fluxes available in the literature within such a framework. 
The assessment of the resulting methods is through the time--dependent, one-- and two--dimensional compressible Euler equations.  A judicious choice of suitable problems is performed with the aim of understanding the strengths and limitations of the numerical fluxes under investigation in combination with the aforementioned semidiscrete approach. 
Key features of the chosen problems include: robustness in the presence of very strong shocks; accuracy in resolving waves associated with intermediate, linear, characteristic fields; and long--time evolution.

From the systematic assessment of the methods following the above criteria, we anticipate the following conclusions:
\begin{itemize}
	\item There are several physical situations in which significant differences are seen in the performance of the numerical fluxes under investigation, with HLLC and Ex.RS outperforming by far the other competitors.
	\item There are  tests for which the choice of the numerical flux has less impact on the methods performance.  But even in such cases, it can consistently be observed that  the performance of LxF,  FORCE and Rus is inferior to that of the remaining  numerical fluxes,  which give similar results amongst themselves.  In particular,  the performance of LxF is always the worst, while the relative performance of FORCE and Rus depends on the specific problem.  
	\item For all investigated numerical fluxes, there is an advantage in increasing the space--time order of accuracy of the discretization. The advantage is much more evident in the more diffusive numerical fluxes,  that is the centred fluxes LxF and FORCE,  and the incomplete upwind numerical flux Rus.  The benefits of the enhanced higher order accuracy are less evident as the sophistication of the numerical flux increases.  As a matter of fact, to a certain extent and depending on the test problem, increasing the space--time order of accuracy compensates for the deficiencies of a more diffusive numerical flux.  However,  enhanced higher order accuracy per se, within the range of considered orders, is not sufficient to attain the accuracy delivered by sophisticated upwind fluxes derived from  complete Riemann solvers, namely HLLC and Ex.RS.  Conversely,  depending on the physical situation, a suitable numerical flux, even for a first order method, may be equivalent to implementing higher order space--time discretizations.  This is typically the case for slowly--moving linear waves associated with intermediate characteristic fields and for very long--time evolutions of traveling waves.
	
	\item A surprising outcome has emerged from the implementation of the centred  (essentially one-dimensional) numerical fluxes LxF and FORCE in a two--dimensional setting via a simultaneous updating formula.  A von Neumann stability analysis of these numerical fluxes in a first order setting shows them to be linearly unstable in two and three space dimensions~\cite{toro2000centred,ToroBook}.  Curiously,  we found that increasing the order of accuracy has a stabilizing effect,  as shown in some of  our numerical experiments, though an explanation remains illusive.
	
\end{itemize}

There are other works in the literature concerned with the influence of numerical fluxes on high order methods.  Investigations in the semidiscrete framework include~\cite{leidi2024performance,qiu2007numerical,qiu2008development,qiu2006numerical,
	hongxia2020numerical,san2015evaluation}.  
In~\cite{leidi2024performance}, the performances of Rus, HLL and a low--dissipation version of HLLC (referred to as ``LHCLL'' in the reference) are compared for  low Mach number flows, in a FV setting with various spatial reconstructions up to order 7.
DG schemes with 8 numerical fluxes and  spatial accuracy up to order 3 are studied in~\cite{qiu2007numerical,qiu2008development}, and with 9 numerical fluxes and spatial accuracy up to order 4
in~\cite{qiu2006numerical}. 
In~\cite{hongxia2020numerical},  in a FV setting with fifth order HWENO space reconstruction the performance of 8 numerical fluxes is assessed.
In~\cite{san2015evaluation},  results  from 6  numerical fluxes in a FV framework with WENO space reconstruction up to order 7 are compared on the Kelvin--Helmholtz instability problem.   In all previously alluded comparative analyses, a semidiscrete approach with third order time integration was adopted.
Two further related works~\cite{titarev2005weno,toro2005tvd} are worth mentioning,  in which the basic monotone flux utilized in the high order methods is replaced by a total variation diminishing (TVD) flux: in \cite{titarev2005weno} the approach is implemented in a semidiscrete framework, while in  \cite{toro2005tvd} this is done in a fully--discrete framework.

In the context of the present work concerned with semidiscrete methods, it is important to remark that a broadly adopted practice consists in employing high order space discretizations along with lower order time discretizations. See for example~\cite{evstigneev2016construction,Evstigneev2016OnTC,gerolymos2009very,
	balsara2000monotonicity,shi2003resolution,hermes2012linear,gao2020seventh},  in which very high order spatial reconstructions are considered but the order of accuracy of the selected time discretizations never exceeds 4.
Many published works consider very high order space discretizations in combination with strong stability preserving (SSP)~\cite{shu1988total,shu1988efficient} or linearly strong stability preserving ($\lSSP$)~\cite{gottlieb2001strong} Runge--Kutta (RK) in time. The accuracy barrier for such ODEs solver, if non-negative RK coefficients are to be preserved~\cite{shu1988efficient},  is order 4~\cite{ruuth2002two} on nonlinear problems. 
Actually, $\lSSP$ RK methods can be arbitrarily high order accurate but only on linear problems.
With the main goal of preventing loss in accuracy due to the mismatch between temporal and spatial order, in some works, a well--tuned reduction of the time step is performed. The main benefit of this strategy is to formally make the accuracy of the scheme equal to the one of the space discretization, however, the severely reduced time step causes excessive numerical diffusion and a huge increase in computational cost, making the scheme unsuitable for practical applications.
In many other works, no adaptation of the time step is considered and the formal order of the scheme is limited by the one of the lower order time discretization.
Such a practice is based on the questionable assumption that the spatial error always dominates the time error. Preliminary investigations performed by the authors seem to contradict this expectation, though a thorough study of this issue is left for future works.
In fact, as already stated,  the WENO--DeC framework adopted in this paper allows for the construction of arbitrarily high order schemes, which are distinguished by the fact that the temporal order of accuracy matches the one of the WENO spatial reconstruction, that is to say $2r-1$, where $r-1$ is the degree of component ENO polynomials making up the WENO polynomial.
Related works on arbitrarily high order frameworks  are available in the literature; see for example~\cite{veiga2024improving,velasco2023spectral,abgrall2023extensions,Decremi,micalizzi2024novel,abgrall2019high,bacigaluppi2023posteriori,abgrall2020high}. Some of them~\cite{veiga2024improving,velasco2023spectral,abgrall2023extensions} are obtained through a simple method of lines approach, adopting a spatial discretization of the PDE and solving in time the resulting ODEs system with a sufficiently accurate time integration method.
In the other mentioned references, more involved space--time discretizations are considered, such as the continuous Galerkin--DeC framework described in~\cite{Decremi,micalizzi2024novel,abgrall2019high,bacigaluppi2023posteriori,abgrall2020high}.
An alternative approach to construct schemes of arbitrary space--time accuracy is the fully--discrete ADER methodology, first communicated in the early works \cite{toro2001towards,grptoro,titarev2002ader,schwartzkopff2002ader}.  
In the fully--discrete ADER approach, the discretizations in space and time are inextricably coupled via the solution of the  Generalized Riemann problem~\cite{grptoro}, $GRP_{m}$, at cell interfaces, in which the initial conditions consist of nonlinear reconstructed polynomials of arbitrary degree $m$, leading to a method of order of accuracy equal to $m+1$ in space and time. 
The ADER method is a one--step scheme in which the nonlinear reconstruction is performed only once per time step.  
The ADER methodology has been developed in both the FV and DG frameworks.  
Further developments of the ADER methodology can be found for example in~\cite{dumbser2005ader,dumbser2006arbitrary,dumbser2006building,dumbser2008unified,
	ADERNSE,dumbser2009very,boscheri2019high,popov2024space,toro2024ader,micalizzi2023efficient}. 
Elementary introductions to ADER can be found in~\cite[Chapters 19 and 20]{ToroBook} and in~\cite[Chapter 14]{toro2024computational}. 

The rest of the paper is structured as follows.
In Section~\ref{sec:basic_notions} we recall the governing equations and the FV method
in the semidiscrete setting.
In Section~\ref{sec:space} we describe the space discretization, namely, the WENO reconstruction in Section~\ref{sec:WENO} and the numerical fluxes under investigation in Section~\ref{sec:fluxes}, 
while, an outline of the DeC time discretization is given in Section~\ref{sec:DeC}.
Numerical results are reported in Section~\ref{sec:numerical_results}.
Conclusions and future perspectives are found in Section~\ref{sec:conclusions}.

\section{Governing equations and Finite Volume method}\label{sec:basic_notions}
In this section, we briefly recall the governing equations and the FV method.
The former topic is described in Section~\ref{sec:governing_equations}, while, the latter one is described in Section~\ref{sec:FV}.
In particular, since in the numerical results we investigate one- and two--dimensional examples, we stick to a description considering two space dimensions.
Generalization to a higher number of space dimensions and restriction to one space dimension are relatively straightforward.

\subsection{Hyperbolic systems of conservation laws}\label{sec:governing_equations}
The governing equations considered herein are hyperbolic systems of conservation laws in the form
\begin{equation}\label{eq:sys}
	\frac{\partial}{\partial t} \uvec{u}(x,y,t) + \frac{\partial}{\partial x}\uvec{f}(\uvec{u}(x,y,t))+\frac{\partial}{\partial y}\uvec{g}(\uvec{u}(x,y,t)) = \uvec{0}, \quad (x,y,t)\in\Omega\times\mathbb{R}^+_0,
\end{equation}
where $\uvec{u}:\Omega\times\mathbb{R}^+_0\longrightarrow \mathbb{R}^{N_c}$ is the unknown vector of conserved variables, with $N_c\in \mathbb{N}^+$ number of scalar components in the system, $\uvec{f},\uvec{g}:\mathbb{R}^{N_c}\longrightarrow\mathbb{R}^{N_c}$ are the fluxes in the $x$- and $y$-direction respectively, and $\Omega\subseteq \mathbb{R}^2$ is the two--dimensional space domain, here assumed to be a rectangle $[x_L,x_R]\times[y_D,y_U]$.
Other shapes of $\Omega$ can be considered, however, in the numerical experiments, we adopt a Cartesian setting, for which a rectangular shape of the space domain is required.
In order for system~\eqref{eq:sys} to be hyperbolic, we must have that any linear combination through real coefficients $\omega_1,\omega_2\in \mathbb{R}$ of the Jacobians of the fluxes is real--diagonalizable, i.e., we require the matrix $\omega_1 \frac{\partial\uvec{f}}{\partial \uvec{u}}(\uvec{u})+\omega_2 \frac{\partial\uvec{g}}{\partial \uvec{u}}(\uvec{u})$ to have $N_c$ real eigenvalues and a corresponding set of linearly independent eigenvectors.
Given a unit vector $\uvec{n}:=(n_1,n_2)^T\in \mathbb{R}^2$, the eigenvalues of the normal Jacobian matrix in direction $\uvec{n}$, 
\begin{equation}
J_{\uvec{n}}(\uvec{u}):=n_1 \frac{\partial\uvec{f}}{\partial \uvec{u}}(\uvec{u})+n_2 \frac{\partial\uvec{g}}{\partial \uvec{u}}(\uvec{u}),
\end{equation}
represent the wave (or characteristic) speeds of the system along the $\uvec{n}$ direction.

More in detail, we focus on the Euler equations of compressible fluid dynamics obtained by
\begin{equation}
	\uvec{u}:=\begin{pmatrix}\rho\\ \rho u\\ \rho v\\E\end{pmatrix},\quad\uvec{f}(\uvec{u}):=\begin{pmatrix}\rho u\\\rho u^2+p\\\rho uv \\(E+p)u\end{pmatrix},\quad\uvec{g}(\uvec{u}):=\begin{pmatrix}\rho v\\\rho uv\\\rho v^2+p \\(E+p)v\end{pmatrix},
	\label{eq:conservative_variables_2D}
\end{equation}
where $\rho$ is the density of the fluid, $u$ and $v$ are the components of the velocity in the $x$- and $y$-direction respectively, $p$ is the pressure, and $E$ the total energy.
The system is closed by an equation of state, given in terms of the specific internal energy $e$
\begin{align}
	e&:=e(\rho,p),
\end{align}
and the total energy is given by
\begin{align}
	E&=\rho \left[e + \frac{1}{2}(u^2+v^2) \right].
\end{align}
Here, we consider the case of ideal fluid for which
\begin{align}
	e(\rho,p):=\frac{p}{\rho(\gamma-1)}
\end{align}
with $\gamma$ adiabatic coefficient given by the ratio between specific heats at constant pressure and volume. Here, we assume $\gamma:=1.4$.
For this system, the wave speeds along the $\uvec{n}$ direction are $v_{\uvec{n}}+c$, $v_{\uvec{n}}-c$ and $v_{\uvec{n}}$, with algebraic multiplicity 1, 1 and 2 respectively, where $v_{\uvec{n}}:=u n_1+v n_2$ is the velocity projection along the $\uvec{n}$ direction, and the sound speed $c$ is given by
\begin{align}
	c:=\sqrt{\frac{\frac{p}{\rho^2}-\frac{\partial}{\partial \rho}e}{\frac{\partial}{\partial p}e}},
\end{align}
which in the considered ideal case reduces to $c:=\sqrt{\gamma \frac{p}{\rho}}$.

\subsection{Finite Volume method}\label{sec:FV}
The FV method was introduced in 1959 through the famous work of Godunov~\cite{Godunov}, generalizing the CIR scheme~\cite{courant1952solution} introduced from Courant, Isaacson and Rees, and it quickly became one of the most popular techniques for numerically solving hyperbolic PDEs. Key contributions on this topic include~\cite{hirsch2007numerical,godlewski2021numerical,toro2024computational,shu1988efficient,shu1989efficient,shu1998essentially,leveque2002finite,ToroBook}. The interested reader can find more information in~\cite{ciallella2022arbitrary,ciallella2024high,ciallella2023arbitrary,abgrall1994essentially,titarev2004finite,toro2006musta,dumbser2007quadrature}.
The method relies on an integral formulation of the governing equations over some spatial control volumes.
Let us therefore consider a Cartesian tessellation of $\Omega$, i.e., a collection of non--overlapping rectangles $C_{i,j}:=[x_{\imh},x_{\iph}]\times [y_{\jmh},y_{\jph}]$ covering $\Omega$ exactly. We assume the tessellation to be uniform, i.e., we consider $\xip-\xin=\Delta x$ and $\yjp-\yjn=\Delta y$ for all $i$ and $j$.
Thus, we proceed by integrating system~\eqref{eq:sys} over each control volume $C_{i,j}$, obtaining the semidiscrete evolution formula
\begin{equation}\label{eq:FV_semidiscretization}
	\frac{d}{dt}\ubar_{i,j}(t) + \frac{1}{\Delta x}(\uvec{f}_{\iip,j}-\uvec{f}_{\iin,j}) + \frac{1}{\Delta y}(\uvec{g}_{i,\jjp}-\uvec{g}_{i,\jjn}) = \uvec{0},
\end{equation}
for the cell average $\ubar_{i,j}$ of the solution over $C_{i,j}$.
The quantities $\uvec{f}_{\iip,j}$ and $\uvec{g}_{i,\jjp}$ are the averages of the fluxes over cell boundaries at time $t$
\begin{align}
	\uvec{f}_{\iip,j} &:= \frac{1}{\Delta y}\int_{\yjn}^{\yjp}\bF(\bu(\xip,y,t))\;dy,   \label{eq:flux F} \\
	\uvec{g}_{i,\jjp} &:= \frac{1}{\Delta x}\int_{\xin}^{\xip}\uvec{g}(\bu(x,\yjp,t))\;dx  .  \label{eq:flux G}
\end{align}
In order to obtain a numerical scheme usable in practice, we need to approximate $\uvec{f}_{\iip,j}$ and $\uvec{g}_{i,\jjp}$ through adoption of suitable quadrature formulas for the surface integrals and suitable reconstruction, in the quadrature points, of the functions under the integrals.

Typically, for this purpose, a space reconstruction of the solution is considered locally in each control volume using the cell averages, and this is used for computing the needed flux approximations in the surface quadrature points via numerical fluxes.
Let us focus on $\uvec{f}_{\iip,j}$, as $\uvec{g}_{i,\jjp}$ is obtained similarly. For the sake of clarity and in order to lighten the notation, we omit the time dependency for the rest of the section, being however clear that all quantities are associated to a generic time $t$.
The reconstruction of the solution is local to each control volume, therefore, in each quadrature point $(\xip,y_q)$ of the face $\left\lbrace \xip \right\rbrace\times [y_{\jmh},y_{\jph}]$, we have two approximations for $\uvec{u}$ from the two neighboring cells $C_{i,j}$ and $C_{i+1,j}$, respectively given by
\begin{align}
	\bu^L_{\iip}(y_q) \approx \bu(\xip^-,y_q)\;,\;\;\;\bu^R_{\iip}(y_q) \approx \bu(\xip^+,y_q).
\end{align}
The flux in the quadrature point is thus computed using a numerical flux $\widehat{\bF}(\bu^L_{\iip}(y_q),\bu^R_{\iip}(y_q)) \approx \bF(\bu(\xip,y_q))$ taking in input the reconstructed values in the quadrature point.
Thus, the average flux in the $x$-direction is computed as 
\begin{equation}\label{eq:flux with GP}
	\uvec{f}_{\iip,j} = \frac{1}{\Delta y} \sum_{q=1}^{N_w} w_q \widehat\bF(\bu_{\iip}^L(y_q),\bu_{\iip}^R(y_q)),
\end{equation}
where $y_q \in [y_{j-1/2},y_{j+1/2}]$ and $w_q$, for $q=1,\dots,N_w$, constitute quadrature points and weights of a quadrature formula.

The resulting space discretization is $P$-th order accurate provided that the employed reconstruction and the adopted quadrature formulas are $P$-th order accurate.
In principle, the numerical flux has no direct impact on the order of accuracy, however, it plays a crucial role in terms of stability. Also the diffusion of the method is strictly related to the adopted numerical flux.
In the numerical simulations, we will consider 8 numerical fluxes, and they are detailed in Section~\ref{sec:fluxes}.

It is essential, when dealing with simulations involving discontinuities or steep gradients in the context of high order space reconstructions, to have a proper non--oscillatory reconstruction in order to prevent spurious oscillations, which could lead to simulation blow--ups.
This is why we assume here the ``nonlinear'' WENO space reconstruction detailed in Section~\ref{sec:WENO}.

\begin{remark}[On the importance of nonlinearity]
	When aiming to solve hyperbolic equations with methods of accuracy order greater than one, it is mandatory to recall Godunov's theorem~\cite{Godunov}.  
	This result states that there are no monotone (for the scalar case) linear schemes of order of accuracy greater than one. 
	For details on the statement and proof, see \cite{ToroBook,toro2024computational}. 
	Therefore, a necessary (but not sufficient) property to incorporate in the design of high order numerical methods is nonlinearity.
	TVD methods~\cite{kolgan1972application,sweby1984high,harten1987uniformly1,van1973towards} were the first attempt to construct nonlinear schemes of accuracy order up to two. 
	Essentially Non-Oscillatory (ENO)~\cite{harten1987uniformly1,harten1987uniformly2} methods represented the first attempt to go beyond second order.  
	WENO~\cite{liu1994weighted,jiang1996efficient} schemes represent an improvement of ENO methods. 
\end{remark}

Clearly, once the space discretization has been fixed, the semidiscrete evolution formulas of all cell averages~\eqref{eq:FV_semidiscretization} constitute an ODEs system to be solved in time through numerical integration. The overall scheme has order at least $P$ if and only if both the space and the time discretizations have order of accuracy at least $P$.
Most of the works dealing with very high order space reconstructions consider lower order time integration schemes~\cite{evstigneev2016construction,Evstigneev2016OnTC,gerolymos2009very,balsara2000monotonicity,shi2003resolution,hermes2012linear,gao2020seventh}, namely, SSP or $\lSSP$ RK schemes. Even though $\lSSP$ RK schemes can be arbitrary high order accurate on linear problems, the accuracy of both schemes is limited to order 4~\cite{ruuth2002two} on general nonlinear problems. Rigorously speaking, this is true up to possible modifications involving downwind computation of numerical fluxes in some stages when some RK coefficients are negative~\cite{shu1988efficient}, which are however never considered in the mentioned references.
Loss in accuracy is, sometimes, prevented by suitably reducing the time step, leading to schemes which are not usable in concrete applications.
Here, aiming at a truly arbitrary high order framework both in space and time, we consider a DeC time discretization~\cite{micalizzi2023new,ciallella2022arbitrary,Decremi}, described in Section~\ref{sec:DeC}.

\begin{remark}[On source terms]
	In this work, we only focus on conservation laws, i.e., on hyperbolic PDEs with no source terms.
	In presence of a source term, one must discretize its cell averages in a suitable way, with order of accuracy matching the one of the spatial reconstruction used to compute the numerical fluxes to prevent order degradation and associated loss in efficiency.
	To this end, if it depends on the solution, one could perform the WENO reconstruction of the solution also in quadrature points internal to the cells to guarantee a sufficiently accurate computation of its cell average, as done in~\cite{ciallella2022arbitrary,ciallella2024high}.
	Let us also remark that the presence of the source induces some other challenges. Source terms can be stiff and therefore induce limitations on the time step, see for example~\cite{dumbser2008finite,boscarino2018implicit,huang2018bound,abgrall2020high,boscheri2023all,caballero2024semi}. Moreover, one may be interested in the preservation of steady equilibria dependent on the source term, see for example~\cite{xing2006high,CaPa,mantri2021well,berberich2021high,ciallella2023arbitrary,barsukow2023well,micalizzi2024novel,ciallella2024high,Maria}.
	These aspects will be studied in future projects, but they are out of the scope of this investigation.
\end{remark}

\section{Space discretization}\label{sec:space}
In this section, we describe the space discretization.
In particular, in Section~\ref{sec:WENO}, we detail the WENO space reconstruction, and in Section~\ref{sec:fluxes} we present the numerical fluxes investigated in this work.

\subsection{Weighted Essentially Non--Oscillatory reconstruction}\label{sec:WENO}

The WENO space reconstruction has been introduced in 1994~\cite{liu1994weighted}. Due to its desirable features, such has its (arbitrary) high order accuracy, robustness and flexibility, it has been employed and developed in many subsequent works. Giving an exhaustive review of all the applications of WENO in a few lines is rather difficult, therefore, we refer the reader to the seminal works~\cite{shu1998essentially,jiang1996efficient,shu1988efficient,shu1989efficient,abgrall1994essentially} and references therein.

In this section, we will describe the WENO reconstruction for a scalar component $q$ in a Cartesian framework. Clearly, the same reconstruction must be performed in the same way for all the components.
We will also start by considering a one--dimensional context, in Section~\ref{sec:WENO1D}. 
The extension to higher dimensions, detailed in Section~\ref{sec:WENO2D}, consists in the application of the one--dimensional reconstruction along different directions in subsequent ``sweeps''.

As numerically demonstrated in~\cite{qiu2002construction,miyoshi2020short,peng2019adaptive,ghosh2012compact}, in order to prevent spurious oscillations in high order spatial discretizations, it is essential to apply the WENO reconstruction to characteristic variables rather than to conserved ones. We explain how to do this in Section~\ref{sec:characteristic_WENO}.

\subsubsection{One--dimensional case}\label{sec:WENO1D}
Let us consider a quantity $q(\xi)$ for which cell averages $\qbar_i$ are available over a collection of uniform cells $C_i:=[\xi_{\imh},\xi_{\iph}]$ of length $\Delta \xi$, and let us focus on the local reconstruction in the generic cell $C_i$.
The WENO reconstruction of order $2r-1$ is obtained by considering a ``big'' high order stencil of $2r-1$ cells
\begin{equation}
	\label{eq:HO_stencil}
	\mathcal{S}^{HO}:=\left\lbrace C_{i-(r-1)},\dots,C_{i+(r-1)} \right\rbrace,
\end{equation}
allowing for $(2r-1)$-th order of accuracy.
Along with such a stencil, we identify $r$ ``small'' low order stencils of $r$ cells
\begin{equation}
	\label{eq:LO_sentcil}
	\mathcal{S}^{LO}_\ell:=\left\lbrace C_{i-(r-1)+\ell},\dots,C_{i+\ell} \right\rbrace, \quad \ell=0,\dots,r-1,
\end{equation}
allowing for $r$-th order of accuracy.

Since we work with cell averages, given a generic stencil $\mathcal{S}:=\left\lbrace C_{s},\dots,C_{f} \right\rbrace$ with $N_{\mathcal{S}}:=f-s+1$ cells, either low or high order, we cannot trivially construct the Lagrange interpolation polynomial of $q$, as its point values are not readily available. 
Instead, we can consider the interpolation of a primitive $Q$ of $q$. More specifically, the point values of $Q$ are available at cell interfaces as
\begin{align}
	Q(\xi_{s-\frac{1}{2}})&=0, \label{eq:arbitrary_initial_value}\\
	Q(\xi_{s-\frac{1}{2}+k})&=\sum_{m=0}^{k-1} \qbar_{s+m}, \quad k=1,\dots,N_{\mathcal{S}}.
\end{align}
Thus, the interpolation polynomial $Q_h(\xi)$ of degree $N_{\mathcal{S}}$ associated to the cell interfaces involved in the stencil $\mathcal{S}$ can be constructed as
\begin{align}
	Q_h(\xi):=\sum_{k=0}^{N_{\mathcal{S}}}Q(\xi_{s-\frac{1}{2}+k})\varphi_k(\xi),
\end{align}
where $\varphi_k$ are the Lagrange polynomials of degree $N_{\mathcal{S}}$ associated to the cell interfaces involved in the interpolation.
Its derivative yields the desired approximation of $q$ 
\begin{align}
	q_h(\xi):=\sum_{k=0}^{N_{\mathcal{S}}}Q(\xi_{s-\frac{1}{2}+k})\frac{d}{d \xi}\varphi_k(\xi).
\end{align}

	Notice that if $N_{\mathcal{S}}$ is the number of cells involved in the stencil, then, we are actually considering $N_{\mathcal{S}}+1$ interface values for the interpolation of $Q$, leading in smooth cases to $(N_{\mathcal{S}}+1)$-th order of accuracy for $Q_h$.
	Therefore, its derivative $q_h$, a polynomial of degree $N_{\mathcal{S}}-1$, is $N_{\mathcal{S}}$-th order accurate.
Observe also that if we fix some point $\xi^*\in[\xi_{\imh},\xi_{\iph}]$, then, the values of the derivative of the Lagrange polynomials, $\frac{d}{d \xi}\varphi_k$, are prescribed and $q_h(\xi^*)$ is simply a linear combination of the cell averages involved in the stencil. This is crucial for the development of the current discussion. In fact, the described construction can be applied to the high order stencil~\eqref{eq:HO_stencil} and to all the low order stencils~\eqref{eq:LO_sentcil}, obtaining in a given point $\xi^*$ multiple approximations for $q(\xi^*)$:
a $(2r-1)$-th order accurate approximation $q_h^{HO}(\xi^*)$, with $q_h^{HO}$ polynomial of degree $2r-2$, 
and $r$ $r$-th order accurate approximations $q_h^{\ell,LO}(\xi^*)$ $\ell=0,\dots,r-1$, with $q_h^{\ell,LO}$ polynomial of degree $r-1$ for any $\ell$.
We look then for some coefficients $d_\ell^{\xi^*}$, which depend on $\xi^*$, such that the linear combination of the low order approximations through these coefficients gives the high order one
\begin{equation}
	q_h^{HO}(\xi^*)=\sum_{\ell=0}^{r-1}d_\ell^{\xi^*} q_h^{\ell,LO}(\xi^*).
	\label{eq:linear_weights}
\end{equation}
This amounts at solving an over--determined linear system in the unknown coefficients $d_\ell^{\xi^*}$.
Such coefficients are commonly referred to as ``linear weights''. The WENO method is based on using~\eqref{eq:linear_weights} with suitably modified weights, namely, the approximation adopted reads
\begin{equation}
	q_h^{WENO}(\xi^*)=\sum_{\ell=0}^{r-1}\omega_\ell^{\xi^*} q_h^{\ell,LO}(\xi^*),
	\label{eq:WENO_approximation}
\end{equation}
where the coefficients $\omega_\ell^{\xi^*}$ are defined in such a way to retrieve the linear weights in smooth cases and to select the approximations associated to the smoothest stencils in case of non--regular solutions.
The most popular definition for such coefficients, usually called ``nonlinear weights'', is~\cite{shu1998essentially}
\begin{equation}
\omega_\ell^{\xi^*} := \frac{\alpha_\ell^{\xi^*}}{\sum^{r-1}_{k=0}\alpha_k^{\xi^*}},\quad \alpha_\ell^{\xi^*} := \frac{d_\ell^{\xi^*}}{(\beta_\ell+\epsilon_{{\small \text{WENO}}})^2},
\end{equation}
where $\epsilon_{{\small \text{WENO}}}$ is a small constant used to prevent divisions by zero and $\beta_m$ are smoothness indicators associated to the stencils
\begin{equation}
	\beta_\ell := \sum_{k=1}^{r-1} \int_{\xi_{i-1/2}}^{\xi_{i+1/2}} \left(\frac{d^k}{d\xi^k} q_h^{\ell,LO}(\xi)\right)^2 \Delta\xi^{2k-1}\diff{\xi},\quad \ell = 0,\ldots,r-1.
\end{equation}
The coefficient $\epsilon_{{\small \text{WENO}}}$ is commonly~\cite{shu1998essentially,jiang1996efficient,ciallella2022arbitrary,ciallella2023arbitrary,ciallella2024high} set to $10^{-6}$ and this is the value we assume also here.

\begin{remark}[On optimality and symmetry of stencils]
	The described procedure, for fixed $r$, considers a stencil of $2r-1$ neighboring cells to achieve the optimal order of accuracy $2r-1$.
	This results in a reconstruction with odd order of accuracy only.
	In principle, one could extend the procedure to obtain reconstructions with even order, however, this would create a conflict between symmetry and optimality of the stencil.
	In fact, in order to obtain a $k$-th order accurate reconstruction, $k$ pieces of information are required. In the FV framework, this amounts to considering a stencil of at least $k$ cells.
	It is rather reasonable to consider a stencil of $k$ neighboring cells, including the cell in which the solution has to be reconstructed.
	If $k$ is even, there are two options and both of them are non--symmetric with respect to such a cell.
	Symmetry can only be achieved relaxing the optimality constraint and considering a symmetric stencil with $k+1$ cells. This is the case for the well-known second order minmod-type reconstructions~\cite{chakravarthy1983high,vanleer1977towardsiii,roe1986characteristic,sweby1984high,van1982comparative,van1974towards}.
	Therefore, for FV methods of even order, assuming a stencil of neighboring cells containing the cell in which the reconstruction has to be performed, either the stencil is non--optimal or it is non--symmetric.
\end{remark}

\subsubsection{Multidimensional case}\label{sec:WENO2D}
Let us focus on the case of two dimensions, as this is the case of interest in this work. The extension to higher dimensions follows analogous guidelines.

In order to reconstruct the local values 
\begin{align}
	q^R_{\imh}(y_q) \approx q(x_\imh^+,y_q)\;,\;\;\;q^L_{\iip}(y_q) \approx q(\xip^-,y_q),
\end{align}
in the cell $C_{i,j}$, needed to compute the numerical fluxes, one considers a stencil of $(2r-1)\times(2r-1)$ cells
\begin{equation}
	\label{eq:HO_stencil_2d}
	\mathcal{S}:=\left\lbrace C_{\ell,m}, \quad   \ell=i-(r-1),\dots,i+(r-1), \quad m=j-(r-1),\dots,j+(r-1) \right\rbrace.
\end{equation}
Then, for each $m=j-(r-1),\dots,j+(r-1)$, one performs a first one--dimensional WENO sweep in the $x$-direction, reconstructing the averages at the cell interfaces $x_{i+1/2}$ with respect to the $y$-direction
\begin{align}
	\qbar^R_{m} \approx \frac{1}{\Delta y}\int_{y_{m-\frac{1}{2}}}^{y_{m+\frac{1}{2}}}q(x_\imh^+,y)dy,\quad 
	\qbar^L_{m} \approx \frac{1}{\Delta y}\int_{y_{m-\frac{1}{2}}}^{y_{m+\frac{1}{2}}} q(\xip^-,y)dy .
\end{align}
This is followed by another WENO sweep in the $y$-direction, taking in input $\qbar^R_{m}$ $m=j-(r-1),\dots,j+(r-1)$ to reconstruct $q^R_{\imh}(y_q)$, and taking in input $\qbar^L_{m}$ $m=j-(r-1),\dots,j+(r-1)$ to reconstruct $q^L_{\iph}(y_q)$.
An analogous strategy, performing a first sweep in the $y$-direction and a second one in the $x$-direction is used to compute the needed approximations in the faces shared among the elements in the $y$-direction.

All the needed WENO coefficients used in the context of this work have been computed through the Matlab script provided in~\cite{ourrepo} and produced in the context of~\cite{ciallella2022arbitrary}.

\begin{remark}[On negative linear weights]\label{rmk:negative_weights}
	As underlined in~\cite{shi2002technique,ciallella2022arbitrary}, for a given surface quadrature one may get negative linear weights, which affect in a negative way the stability of the scheme.
	In such cases, particular care and special treatments are required.
	As in~\cite{ciallella2022arbitrary}, the four--point Gauss--Legendre quadrature rule has been adopted for WENO5 in two space dimensions for the surface integrals in order to avoid negative linear weights occurring for the three--point one.
	In all other two--dimensional cases, the Gauss--Legendre quadrature formula with the minimal number of points needed in order to reach the desired accuracy has been considered for the surface integrals.
	%
	%
	%
	%
	Let us remark that, as stated in~\cite{shi2002technique}, negative linear weights do not occur in one--dimensional FV WENO reconstructions for any order of accuracy, as well as, in finite difference WENO reconstructions in any spatial dimension for any order of accuracy.
\end{remark}

For more details, the reader can consult \cite{titarev2004finite,shu1998essentially,ciallella2022arbitrary,lore_phd_thesis}.

\subsubsection{Reconstruction of characteristic variables}\label{sec:characteristic_WENO}

Applying the reconstruction directly to the conserved variables is well known~\cite{qiu2002construction,miyoshi2020short,peng2019adaptive,ghosh2012compact} to cause oscillations in high order space discretizations, therefore, it is recommended to apply it to the characteristic ones. 
Namely, the reconstruction should be applied to the quantities of the vector $L_{\uvec{n}}\ubar_{i,j}$ and, afterwards, the reconstructed variables should be multiplied by $R_{\uvec{n}}$, where $R_{\uvec{n}}$ is the matrix of the right eigenvectors along the direction $\uvec{n}$ of the reconstruction and $L_{\uvec{n}}:=R^{-1}_{\uvec{n}}$, its inverse, is the matrix of the left eigenvectors along the same direction.
More in detail, for each local reconstruction in the generic cell $C_{i,j}$, when reconstructing characteristic variables, the matrices $R_{\uvec{n}}$ and $L_{\uvec{n}}$ are ``frozen'', with $R_{\uvec{n}}:=R_{\uvec{n}}(\ubar_{i,j})$ and $L_{\uvec{n}}:=L_{\uvec{n}}(\ubar_{i,j})$.
Clearly, for each one--dimensional sweep, the proper matrices, corresponding to the direction along which the sweep is performed, must be used. 
Indeed, this option is more expensive than the simple direct reconstruction of conserved variables and does not increase the order of accuracy, however, it results essential for obtaining non--oscillatory results in tests involving discontinuous solutions. 

\subsection{Numerical fluxes}\label{sec:fluxes}
In this section, we present the numerical fluxes under investigation, but let us give some general information on this topic first.
In determining the numerical flux at the interface, one counts on two main ingredients, namely, the governing PDE~\eqref{eq:sys} in the normal direction to the interface and the reconstructions at left and right sides of the interface.
These two items precisely define the initial value problem called ``Riemann problem''~\cite{riemann1860fortpflanzung,Godunov,ToroBook}. 
Any expression for the numerical flux will necessarily make use of the Riemann problem, even if (very) approximately.

Numerical fluxes can broadly be divided into two major classes, namely, centred (or non--upwind) fluxes and upwind fluxes. 
Centred fluxes do not explicitly use wave propagation information emerging from the solution of the Riemann problem. 
Examples among the numerical fluxes considered here include LxF and FORCE. 
On the contrary, upwind fluxes explicitly use wave propagation information emerging from the Riemann problem. 
Top in the hierarchy of upwind fluxes is the flux obtained from the exact solution of the Riemann problem, Ex.RS, as first proposed by Godunov~\cite{Godunov}; other examples included here are Rus, HLL, CU, LDCU and HLLC. 
Upwind fluxes are further classified as complete or incomplete. 
A complete upwind numerical flux adopts a wave model that includes all the characteristic fields of the exact problem. An incomplete upwind numerical flux uses a wave model with less waves than characteristic fields in the exact problem. 
Rus is incomplete for any system of equations, i.e., from two equations on; 
HLL, CU and LDCU are incomplete for any system of more than two equations; 
HLLC is complete for the Euler equations in one, two and three space dimensions, because the intermediate wave incorporated into the wave model represents the multiplicity (3) that includes contact discontinuities and shear waves in the transverse directions. This is also applicable to the three--dimensional Euler equations for multicomponent flows~\cite{ToroBook}. HLLC-type numerical fluxes have been constructed also for larger systems, see~\cite{tokareva2010hllc}.
Finally, by definition, Ex.RS is also complete.
Generally, upwind fluxes perform better than centred fluxes, however, the latter ones are simpler to implement for general hyperbolic systems. Among the upwind fluxes, complete ones tend to give better results, especially when resolving slowly moving waves associated with intermediate, linear characteristic fields; examples include contact discontinuities, shear waves, and material interfaces.

The 8 numerical fluxes under investigation are detailed in the following.
Let us focus on the flux along the $x$-direction, but analogous definitions hold also for the one along the $y$-direction.
\begin{itemize}
	\item[1)] \textbf{Lax--Friedrichs (LxF)}\\
	The LxF numerical flux has been introduced in~\cite{lax1954weak} and it reads
	\begin{equation}
		\widehat{\bF}^{\text{LxF}}({\bu}^L,{\bu}^R) := \frac{1}{2}\left(\bF({\bu}^R) + \bF({\bu}^L)\right) - \frac{1}{2}\frac{\Delta x}{\Delta t}\left({\bu}^R - {\bu}^L\right).
	\end{equation}
	The main advantages are its cheap expression and its simplicity, which does require any knowledge of the Riemann problem structure.
	However, it is characterized by a (very) high level of diffusion, in fact it is the most diffusive monotone flux in the context of scalar problems, which makes it not suitable for real applications.

	\item[2)] \textbf{First--Order Centred (FORCE)}\\	
	The FORCE numerical flux, introduced in~\cite{Toro1996,toro2000centred,chen2003centred}, is defined as the average between the LxF numerical flux and the Richtmyer (or two--step Lax--Wendroff) numerical flux~\cite{richtmyer1967difference}, and reads
	\begin{equation}
		\widehat{\bF}^{\text{FORCE}}({\bu}^L,{\bu}^R) := \frac{1}{2}\left[\widehat{\bF}^{LxF}({\bu}^L,{\bu}^R) + \widehat{\bF}^{\text{Richtm}}({\bu}^L,{\bu}^R)\right], 
	\end{equation}
	where the  Richtmyer (or two--step Lax--Wendroff) numerical flux is given by
	\begin{align}
		\widehat{\bF}^{\text{Richtm}}({\bu}^L,{\bu}^R)&:=\bF(\uvec{u}^*({\bu}^L,{\bu}^R)), \\ \uvec{u}^*({\bu}^L,{\bu}^R)&:=\frac{1}{2}( {\bu}^L+{\bu}^R) - \frac{1}{2}\left[\frac{\Delta t}{\Delta x}(\bF({\bu}^R) - \bF({\bu}^L) \right].
	\end{align}
	Being the average between the LxF numerical flux and the Richtmyer (or two--step Lax--Wendroff) numerical flux, it is less diffusive than LxF~\cite{ToroBook}.
	Moreover, just like LxF, it does not require any element of the Riemann problem solution, which makes it suitable for complicated systems where Riemann solvers may not be available.

	\item[3)] \textbf{Rusanov (Rus)}\\
	This numerical flux has been introduced in~\cite{Rusanov1961}. Its expression is similar to the one of LxF, in fact it is sometimes referred as local LxF or mistakenly as LxF, and it reads
	\begin{equation}
		\widehat{\bF}^{\text{Rus}}({\bu}^L,{\bu}^R) := \frac{1}{2}\left(\bF({\bu}^R) + \bF({\bu}^L)\right) - \frac{1}{2}s\left({\bu}^R - {\bu}^L\right),
	\end{equation}
	where $s$ is the maximum in absolute value of the local wave speed in the $x$-direction associated to the states ${\bu}^L$ and ${\bu}^R$.
	This numerical flux is still rather diffusive but not as much as LxF.
	Due to its simple expression, which allows for an explicit handling very useful in analytical proofs, and due to its robustness it is broadly used in many applications.

	\item[4)] \textbf{Harten--Lax--van Leer (HLL)}\\
	This numerical flux was introduced in~\cite{harten1983upstream} and its derivation is more involved than the one of the previous numerical fluxes. In fact, it is based on a two--wave model approximate Riemann solver~\cite{ToroBook}. Its expression reads
	\begin{equation}
		\widehat{\bF}^{\text{HLL}}({\bu}^L,{\bu}^R):=\begin{cases}
			\bF({\bu}^L),   & \text{if} \quad 0\leq s^L, \\  
			\frac{s^R\bF({\bu}^L)-s^L\bF({\bu}^R)+s^L s^R({\bu}^R-{\bu}^L)}{s^R-s^L}, \quad &\text{if} \quad s^L\leq 0\leq s^R,\\
			\bF({\bu}^R),	& \text{if} \quad  0 \geq s^R,
		\end{cases}
	\label{eq:HLL}
	\end{equation}
	where $s^L$ and $s^R$ are estimates for the slowest and fastest local wave speeds from the augmented Riemann problem in the $x$-direction between the states ${\bu}^L$ and ${\bu}^R$.
	Here, for the wave speed estimates, we assume the rigorous analytical bounds from~\cite{toro2020bounds}, obtained through an adaptive approximate--state Riemann solver including the Primitive Variable Riemann solver, the Two--Rarefaction Riemann solver and the Two--Shock Riemann solver, as described in \cite[Section 9.5]{ToroBook}.	
	Being more sophisticated with respect to the previous numerical fluxes, HLL is less diffusive, and it generally provides sharper results.
		
	\item[5)] \textbf{Central--Upwind (CU)}\\
	This numerical flux was derived in~\cite{kurganov2001semidiscrete}, in the context of central schemes for hyperbolic conservation laws, taking the limit as $\Delta t\rightarrow 0$ of the discretization proposed in~\cite{kurganov2000new}, and it reads
	\begin{equation}
		\widehat{\bF}^{\text{CU}}({\bu}^L,{\bu}^R):=\frac{a^R\bF({\bu}^L)-a^L\bF({\bu}^R)+a^L a^R({\bu}^R-{\bu}^L)}{a^R-a^L}, 
	\label{eq:CU}
	\end{equation}	
	where $a^L$ and $a^R$ are estimates for minimum and maximum of the one--sided local wave speeds from the augmented Riemann problem in the $x$-direction between the states ${\bu}^L$ and ${\bu}^R$, that is to say $a^L:=\min{(s^L,0)}$ and $a^R:=\max{(s^R,0)}$, where $s^L$ and $s^R$ have the same meaning as in the HLL numerical flux.
	Simple algebra shows that, even though the derivation is different, this numerical flux is equivalent to the HLL one. Here, in order to distinguish them, we consider for the CU numerical flux the simple wave speed estimates adopted in~\cite{kurganov2001semidiscrete,kurganov2023new,diaz2019path,kurganov2002central,kurganov2020well}, $s^L:=\min{(u^L-c^L,u^R-c^R)}$ and $s^R:=\max{(u^L+c^L,u^R+c^R)}$, initially proposed by Davis~\cite{davis1988simplified}.
	Let us remark that other wave speed estimates exist in literature, e.g., the ones proposed by Einfeldt in \cite{einfeldt1988godunov}.
	
	\item[6)] \textbf{Low--Dissipation Central--Upwind (LDCU)}\\
	This numerical flux has been proposed in~\cite{kurganov2023new} and it is in an improvement of the previous one obtained, essentially, by adding an antidiffusive contribution. Its expression reads
	\begin{equation}
		\widehat{\bF}^{\text{LDCU}}({\bu}^L,{\bu}^R):=\frac{a^R\bF({\bu}^L)-a^L\bF({\bu}^R)+a^L a^R({\bu}^R-{\bu}^L-\delta{\bu})}{a^R-a^L} 
	\label{eq:LDCU}
	\end{equation}	
	where $a^L$ and $a^R$ have the same meaning as in the CU numerical flux, and $\delta\uvec{u}$ is a ``built-in'' antidiffusion term defined as
	\begin{equation}
		\delta\uvec{u}:={\rm minmod}\left(\uvec{u}^R-\uvec{u}^*,\uvec{u}^*-\uvec{u}^L\right),
	\end{equation}
	with $\uvec{u}^*$ being an intermediate value obtained as
	\begin{equation}
		\uvec{u}^*:=\frac{a^R\uvec{u}^R-a^L\uvec{u}^L-\left[\bF({\bu}^R)-\bF({\bu}^L)\right]}{a^R-a^L},
	\end{equation}
	and the minmod function
	\begin{equation*}
		{\rm minmod}(z_1,z_2):=
		\left\{\begin{aligned}
			&\min_{i=1,2}z_i,&&\mbox{if}~z_i>0,\quad i=1,2,\\
			&\max_{i=1,2}z_i,&&\mbox{if}~z_i<0,\quad i=1,2,\\
			&0,&&\mbox{otherwise},
		\end{aligned}
		\right.
	\end{equation*}
	being applied in a component-wise fashion.

	\item[7)] \textbf{HLLC}\\	
	This numerical flux has been firstly proposed in~\cite{toro1992restoration,toro1994restoration}. It is built upon the HLL numerical flux in order to overcome the difficulties of two--wave models in resolving contact surfaces, shear
	waves and material interfaces. In fact, in HLLC (where the ``C'' stays for ``contact''), a more physically accurate three--wave model is assumed~\cite{ToroBook} leading to
	\begin{equation}
	\widehat{\bF}^{\text{HLLC}}({\bu}^L,{\bu}^R):=\begin{cases}
		\bF({\bu}^L),   & \text{if} \quad 0\leq s^L, \\  
		\bF({\bu}^L)+s^L (\uvec{u}^{*L}-\uvec{u}^{L}), \quad &\text{if} \quad s^L\leq 0\leq s^{*},\\
		\bF({\bu}^R)+s^R (\uvec{u}^{*R}-\uvec{u}^{R}), \quad &\text{if} \quad s^{*}\leq 0\leq s^R,\\
		\bF({\bu}^R),	& \text{if} \quad 0 \geq s^R,
		\end{cases}
	\label{eq:HLLC}
	\end{equation}
	where, again, $s^L$ and $s^R$ are estimates for the slowest and fastest local wave speeds from the augmented Riemann problem in the $x$-direction between the states ${\bu}^L$ and ${\bu}^R$, while, $s^{*}$, $\uvec{u}^{*L}$ and $\uvec{u}^{*R}$ are defined as follows
	\begin{align}
		s^{*} &:= \frac{p^R - p^L + \rho^L u^L (s^L - u^L) - \rho^R u^R (s^R - u^R)}{\rho^L (s^L - u^L) - \rho^R (s^R - u^R)},\\
		\mathbf{u}^{*K} &:= \rho^K \left( \frac{s^K - u^K}{s^K - s^{*}} \right)
		\begin{bmatrix}
			1 \\
			s^{*} \\
			v^K \\
			\frac{E^K}{\rho^K} + (s^{*} - u^K) \left[ s^{*} + \frac{p^K}{\rho^K (s^K - u^K)} \right]
		\end{bmatrix},		
	\end{align}
	with $K$ being a generic handle for $L$ and $R$.
	
	Also in this case, as for the HLL, we assume here the same rigorous estimates for $s^L$ and $s^R$ from~\cite{toro2020bounds} computed as described in~\cite[Section 9.5]{ToroBook}.

	\item[8)] \textbf{Exact Riemann solver~(Ex.RS)}\\
	This numerical flux, sometimes referred as Godunov's flux, has been introduced in~\cite{Godunov}, and reads
	\begin{equation}
		\widehat{\bF}^{\text{Ex.RS}}({\bu}^L,{\bu}^R) := \bF({\bu}^*),
	\end{equation}
	where ${\bu}^*$ is the exact solution of the augmented Riemann problem in the $x$-direction between the states ${\bu}^L$ and ${\bu}^R$ for $\xi:=\frac{x}{t}=0$.
	
	This is the most precise numerical flux and the least diffusive monotone flux in the context of scalar problems.
	Its main drawback is that, for general nonlinear systems, it cannot be expressed in closed--form and it requires iterative strategies to be computed, which may increase the computational cost with respect to other numerical fluxes.
	Here, in the context of the Euler equations, we adopt the strategy introduced in~\cite{toro1989fast} and thoroughly described in \cite[Chapter 4]{ToroBook}.	
	
\end{itemize}

\begin{remark}[On speed estimates]
	HLL~\eqref{eq:HLL}, CU~\eqref{eq:CU}, LDCU~\eqref{eq:LDCU} and HLLC~\eqref{eq:HLLC} numerical fluxes require wave speed estimates. 
	According to Harten, Lax and van Leer~\cite{harten1983upstream}, such estimates must bound the true wave speeds.
	In~\cite{toro2020bounds} it is shown that, with one exception, all existing wave speed estimates fail to bound the true wave speeds. 
	The only exception is given by the estimates proposed in~\cite{toro1992restoration,toro1994restoration}, as rigorously proven in~\cite{guermond2016fast}.
\end{remark}

\section{Time discretization}\label{sec:DeC}
The time integration technique adopted here is a DeC scheme, and it is described in this section.
The first publication on DeC is due to Fox and Goodwin in 1949~\cite{fox1949some}. In its original conception, it consisted in a finite difference strategy for initial value problems. The key aspect of the scheme was an iterative procedure involving subsequent corrections of the error with respect to the exact solution.
The approach gained popularity in 2000, when Dutt, Greengard and Rokhlin~\cite{Decoriginal} proposed it in a more modern framework.
They constructed arbitrary high order methods for ODEs based on an iterative procedure gaining one order of accuracy at each iteration.
Since then, many applications, improvements and developments of the approach have followed.
In this context, it is important to mention the works of Minion and collaborators~\cite{minion2003semi,minion2004semi,layton2005implications,huang2006accelerating,minion2011hybrid,speck2015multi}.
In~\cite{Decremi}, Abgrall introduced a new abstract formulation of the DeC approach with application to the time discretization of continuous finite element methods avoiding solutions of linear systems.
Several works have been developed based on this formalism~\cite{abgrall2019high,abgrall2020high,michel2021spectral,michel2023spectral,bacigaluppi2023posteriori,ciallella2022arbitrary,abgrall2021relaxation,ciallella2023arbitrary,abgrall2024staggered,abgrall2020multidimensional,micalizzi2024novel,offner2020arbitrary}. 
In \cite{han2021dec,micalizzi2023efficient,veiga2024improving}, it was shown how ADER methods, both for ODEs and PDEs, can be put in the DeC framework introduced by Abgrall.
Further works based on the DeC approach are~\cite{boscarino2016error,boscarino2018implicit,hamon2019multi,franco2018multigrid,minion2015interweaving,benedusi2021experimental,liu2008strong,ong2020deferred,ketcheson2014comparison,christlieb2009comments,christlieb2010integral}.

Many DeC schemes exist in literature nowadays, as can be inferred from the short non--exhaustive review presented in the previous lines. Here, we consider the explicit DeC approach referred as ``bDeC'' in~\cite{micalizzi2023new,lore_phd_thesis} and based on Abgrall's formalism~\cite{Decremi}. The same method has been also used in other references, see for example~\cite{han2021dec,ciallella2022arbitrary,ciallella2023arbitrary}.

We will start by describing Abgrall's DeC framework, in Section~\ref{sec:DeC_Remi}, and we will continue with the description of the bDeC method for ODEs, in Section~\ref{sec:bDeC}.

\subsection{Abgrall's Deferred Correction formalism}\label{sec:DeC_Remi}
Assume that we want to solve some analytical problem with exact solution $\uex$, 
and assume that we have two operators $\lopdt^1,\lopdt^2: X \rightarrow Y$, 
depending on a same discretization parameter $\Delta$ and associated to two different discretizations of the aforementioned problem, 
defined between the normed vector spaces $(X,\norm{\cdot}_X)$ and $(Y,\norm{\cdot}_Y)$. 

More in detail, we assume the operators to have different nature: $\lopdt^2$ is associated with a high order implicit discretization of the analytical problem, instead, $\lopdt^1$ is associated to a low order explicit one.
Being interested in a high order approximation of the solution of the analytical problem, we would like to solve the operator $\lopdt^2$, namely, we would like to find $\underline{\uvec{u}}_\Delta \in X$ such that $\lopd^2(\underline{\uvec{u}}_\Delta)=\uvec{0}_Y$, however, this task is not easy due to the implicit nature of the operator.
On the other hand, we assume to be able to easily solve the problem $\lopd^1(\underline{\uvec{u}})=\underline{\uvec{r}}$ for given $\underline{\uvec{r}} \in Y$. In particular, when $\underline{\uvec{r}}=\uvec{0}_Y$, we get the solution of the operator $\lopd^1$, which is a low order accurate approximation of the analytical solution.
The situation is the following: we would prefer to solve the operator $\lopd^1$ rather than $\lopd^2$, as this would be much simpler, but this would result in an approximation of the solution not accurate enough.

The next theorem provides an easy recipe to approximate with arbitrary high order of accuracy the solution $\undu_\Delta$ of the operator $\lopd^2$ through an easy explicit iterative procedure.

\begin{theorem}[DeC, Abgrall~\cite{Decremi}]\label{th:DeC}
	Let us assume that the operators $\lopdt^1,\lopdt^2: X \rightarrow Y$ fulfill the following properties:
	\begin{enumerate}
		\item \textbf{Existence of a unique solution to $\lopd^2$} \\
		$\exists ! \,\usol \in X$ solution of $\lopd^2$ such that $\lopd^2(\usol)=\uvec{0}_Y$;
		\item \textbf{Coercivity-like property of $\lopd^1$} \\
		$\exists \,\alpha_1 \geq 0$ independent of $\Delta$ such that
		\begin{equation}
			\norm{\lopd^1(\underline{\uvec{v}})-\lopd^1(\underline{\uvec{w}})}_Y\geq \alpha_1\norm{\underline{\uvec{v}}-\underline{\uvec{w}}}_X, \quad \forall \underline{\uvec{v}},\underline{\uvec{w}}\in X;
			\label{eq:DeC_coercivity}
		\end{equation}
		\item \textbf{Lipschitz-continuity-like property of $\lopd^1-\lopd^2$} \\
		$\exists\, \alpha_2 \geq 0$ independent of $\Delta$ such that
		\begin{equation}
			\norm{\left[\lopd^1(\underline{\uvec{v}})\!-\!\lopd^2(\underline{\uvec{v}})\right]\!-\!\left[\lopd^1(\underline{\uvec{w}})\!-\!\lopd^2(\underline{\uvec{w}})\right]}_Y\!\leq \!\alpha_2 \Delta \!\norm{\underline{\uvec{v}}-\underline{\uvec{w}}}_X, \quad\forall \underline{\uvec{v}},\underline{\uvec{w}}\in X.
			\label{eq:DeC_lipschitz}
		\end{equation}
	\end{enumerate}
	For a given $\underline{\uvec{u}}^{(0)}\in X$, let us consider the sequence of vectors $\left\lbrace\underline{\uvec{u}}^{(p)}\right\rbrace_{p\in\mathbb{N}}\subseteq X$ obtained by solving the following iterative procedure
	\begin{equation}
		\label{eq:DeC_iteration}
		\lopd^1(\underline{\uvec{u}}^{(p)}):=\lopd^1(\underline{\uvec{u}}^{(p-1)})-\lopd^2(\underline{\uvec{u}}^{(p-1)}), \quad p\geq 1.
	\end{equation}
	Then, the following error estimate holds
	\begin{equation}
		\label{eq:DeC_accuracy}
		\norm{\underline{\uvec{u}}^{(p)}-\usol}_X \leq \left( \Delta \frac{\alpha_2}{\alpha_1} \right)^p\norm{\underline{\uvec{u}}^{(0)}-\usol}_X, \quad\forall p\in \mathbb{N}. 
	\end{equation}
\end{theorem}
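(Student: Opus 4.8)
The plan is to establish the error estimate \eqref{eq:DeC_accuracy} by induction on $p$, showing that each iteration of \eqref{eq:DeC_iteration} contracts the error $\norm{\up - \usol}_X$ by a factor $\Delta \alpha_2/\alpha_1$. The base case $p=0$ is trivial, since it reduces to $\norm{\underline{\uvec{u}}^{(0)}-\usol}_X \leq \norm{\underline{\uvec{u}}^{(0)}-\usol}_X$. So assume the estimate holds at step $p-1$ and consider step $p$.

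The key algebraic manipulation is to rewrite the difference $\lopd^1(\up) - \lopd^1(\usol)$ using both the iteration formula and the fact that $\usol$ solves $\lopd^2$. From \eqref{eq:DeC_iteration} we have $\lopd^1(\up) = \lopd^1(\up[(p-1)]) - \lopd^2(\up[(p-1)])$, while from property 1, $\lopd^2(\usol) = \uvec{0}_Y$, so that $\lopd^1(\usol) = \lopd^1(\usol) - \lopd^2(\usol)$. Subtracting,
\begin{equation}
	\lopd^1(\up) - \lopd^1(\usol) = \left[\lopd^1(\up[(p-1)]) - \lopd^2(\up[(p-1)])\right] - \left[\lopd^1(\usol) - \lopd^2(\usol)\right].
\end{equation}
Now I would apply the coercivity-like property \eqref{eq:DeC_coercivity} to the left-hand side with $\underline{\uvec{v}} = \up$ and $\underline{\uvec{w}} = \usol$, and the Lipschitz-continuity-like property \eqref{eq:DeC_lipschitz} to the right-hand side with $\underline{\uvec{v}} = \up[(p-1)]$ and $\underline{\uvec{w}} = \usol$. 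This yields
\begin{equation}
	\alpha_1 \norm{\up - \usol}_X \leq \norm{\lopd^1(\up) - \lopd^1(\usol)}_Y \leq \alpha_2 \Delta \norm{\up[(p-1)] - \usol}_X,
\end{equation}
and dividing by $\alpha_1$ gives the one-step contraction $\norm{\up - \usol}_X \leq \Delta \frac{\alpha_2}{\alpha_1}\norm{\up[(p-1)] - \usol}_X$. Combining this with the inductive hypothesis closes the induction.

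The main subtlety to be careful about — rather than a genuine obstacle — is the division by $\alpha_1$: if $\alpha_1 = 0$ the coercivity inequality is vacuous and the argument breaks, so one implicitly needs $\alpha_1 > 0$ (this is the meaningful reading of property 2, and the ratio $\alpha_2/\alpha_1$ in the statement already presupposes it). A second point worth a remark is that $\lopd^1(\up)$ is well-defined by the iteration only if $\lopd^1$ is invertible, or at least if the right-hand side of \eqref{eq:DeC_iteration} lies in its range; the coercivity property guarantees injectivity of $\lopd^1$, and the hypothesis that one can solve $\lopd^1(\underline{\uvec{u}}) = \underline{\uvec{r}}$ stated in the surrounding text supplies existence, so the sequence $\{\up\}_{p \in \mathbb{N}}$ is genuinely well-defined. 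Everything else is a routine combination of the three hypotheses with the triangle-inequality-free estimate above; no delicate analysis is needed.
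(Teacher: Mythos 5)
Your proof is correct and takes essentially the same route as the paper: the one-step estimate obtained by combining the coercivity of $\lopd^1$, the iteration formula, the insertion of $\lopd^2(\usol)=\uvec{0}_Y$, and the Lipschitz-type bound on $\lopd^1-\lopd^2$ is exactly the paper's chain of inequalities, with your induction corresponding to the paper's recursive application. Your side remarks on needing $\alpha_1>0$ and on the solvability of $\lopd^1$ for well-definedness of the iterates are sound observations but do not alter the argument.
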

\begin{proof}
	Making direct use of the previous assumptions we have that
	\begin{align}
			\norm{\underline{\uvec{u}}^{(p)}-\usol}_X & \leq \frac{1}{\alpha_1}  \norm{\lopd^1(\underline{\uvec{u}}^{(p)})-\lopd^1(\usol)}_Y \label{eq:proof_dec_1}\\
			&= \frac{1}{\alpha_1}  \norm{\lopd^1(\underline{\uvec{u}}^{(p-1)})-\lopd^2(\underline{\uvec{u}}^{(p-1)})-\lopd^1(\usol)}_Y \label{eq:proof_dec_2}\\
			&= \frac{1}{\alpha_1}  \norm{ \left[ \lopd^1(\underline{\uvec{u}}^{(p-1)})-\lopd^2(\underline{\uvec{u}}^{(p-1)})\right]-\left[\lopd^1(\usol)-\lopd^2(\usol)\right]}_Y\label{eq:proof_dec_3}\\
			&\leq \Delta \frac{\alpha_2}{\alpha_1}  \norm{\underline{\uvec{u}}^{(p-1)}-\usol}_X.\label{eq:proof_dec_4}
	\end{align}
	In particular, in~\eqref{eq:proof_dec_1} we have used the property \eqref{eq:DeC_coercivity} of the operator $\lopd^1$, 
	in~\eqref{eq:proof_dec_2} we have used the definition~\eqref{eq:DeC_iteration} of the DeC iteration, 
	in~\eqref{eq:proof_dec_3} we have used the fact that $\usol$ is the solution of the operator $\lopd^2$, 
	and finally in~\eqref{eq:proof_dec_4} we have used the property~\eqref{eq:DeC_lipschitz} of $\lopd^1-\lopd^2$.
	The same arguments can be applied recursively to obtain~\eqref{eq:DeC_accuracy}.
\end{proof}

Let us notice that solving the generic iteration~\eqref{eq:DeC_iteration} with respect to $\underline{\uvec{u}}^{(p)}$ is easy due to our assumptions on the operator $\lopd^1$, as the right-hand side can be explicitly computed.
Moreover, we have that, for any given $\underline{\uvec{u}}^{(0)}$, the sequence of $\left\lbrace\underline{\uvec{u}}^{(p)}\right\rbrace_{p\in\mathbb{N}}$ converges to $\usol$ provided that $\Delta$ is small enough.
Let us also observe that a direct consequence of~\eqref{eq:DeC_accuracy} is that the sequence gains one order of accuracy at each iteration with respect to $\usol$.
Since we are actually interested in approximating the exact solution $\uex$, and not strictly in approximating the solution $\usol$ of the operator $\lopdt^2$, it does not make sense to perform the iterative procedure until convergence up to machine precision.
More specifically, if $\usol$ is $P$-th order accurate with respect to $\uex$ and $\underline{\uvec{u}}^{(0)}$ is an $O(\Delta)$-approximation of $\undu_{ex}$, the optimal number of iterations is $P$. After that, even though the accuracy with respect to $\usol$ will increase, the accuracy with respect to $\undu_{ex}$ will stay $P$.
Namely, the accuracy of $\underline{\uvec{u}}^{(p)}$ with respect to $\uex$ is $\min{(p,P)}$.
Therefore, here, we always consider a number of iterations equal to the formal order of accuracy of the method.

Further developments of the described framework allow to indefinitely increase the order of accuracy without saturation, but they will not be considered here. The interested reader is referred to~\cite{micalizzi2023efficient,micalizzi2023new,veiga2024improving}. 


\subsection{bDeC for ordinary differential equations}\label{sec:bDeC}
Let us consider the initial value problem
\begin{equation}
	\label{eq:ODE}
	\begin{cases}
		\frac{d}{dt}\uvec{u}(t) = \uvec{G}(t,\uvec{u}(t)),\quad t\in[0,T_f], \\
		\uvec{u}(0)=\uvec{z},
	\end{cases}
\end{equation}
with unknown solution $\uvec{u}:[0,T_f] \rightarrow \mathbb{R}^{N_c}$, $N_c\in \mathbb{N}^+$ being the number of components of the system of ODEs, final time $T_f\in \mathbb{R}^+$, initial condition $\uvec{z} \in \R^{N_c}$, and right-hand side function $\uvec{G}: [0,T_f] \times \R^{N_c} \to \R^{N_c}$.
We assume $\uvec{G}$ to satisfy the classical smoothness constraints guaranteeing the existence of a unique solution, i.e., $\uvec{G}$ is required to be Lipschitz-continuous with respect to $\uvec{u}$ uniformly with respect to $t$ with a Lipschitz constant $C_{Lip}$.

As customary in the context of one--step methods, we focus on the generic time step $[t_n,t_{n+1}]$, with $\Delta t:=t_{n+1}-t_n$, and we try to get $\uvec{u}_{n+1}\approx \uvec{u}(t_{n+1})$ by knowing $\uvec{u}_{n}\approx \uvec{u}(t_{n})$. This is done by introducing $M+1$ subtimenodes $t^m$ $m=0,\dots,M$ in the considered time interval such that  
\begin{equation}
	t_n=:t^0<t^1<\dots<t^M:=t_{n+1}.
\end{equation}
The order of the method is proportional to the number of subtimenodes. In the context of this work, we assume Gauss--Lobatto subtimenodes which lead to higher accuracy with respect to equispaced ones for fixed number. More specifically, $M+1$ equispaced subtimenodes are sufficient to achieve accuracy $M+1$, while, $M+1$ Gauss--Lobatto subtimenodes can achieve accuracy $2M$.
We will adopt the following notation:
$\uvec{u}(t^m)$ represents the exact solution to the ODE in $t^m$, while, $\uvec{u}^m$ is an approximation of the same quantity in the same subtimenode.
For the first subtimenode only, we set $\uvec{u}^0:=\uvec{u}_n$.

The scheme is constructed upon the integral version of the original ODE system~\eqref{eq:ODE} in each interval $[t^0,t^m]$
\begin{equation}
	\label{exint}
	\uvec{u}(t^m)-\uvec{u}(t_n)-\int_{t^0}^{t^m}\uvec{G}(t,\uvec{u}(t))dt=\uvec{0}, \quad m=1,\dots,M.
\end{equation}
In such a context, the DeC operators are nonlinear endomorphisms defined over $X=Y:=\R^{(M\times N_c)}$, which represents the space of the values of the solution in all subtimenodes but the first one, $t^0:=t_n$, for which the solution is known.

The implicit high order operator $\lopdt^2:\R^{(M\times N_c)}\to\R^{(M\times N_c)}$ is defined by approximating the integral over each interval through the high order quadrature formula associated to the subtimenodes
\begin{equation}
	\label{l2ODE}
	\lopdt^2(\underline{\uvec{u}}) = \begin{pmatrix}
		\uvec{u}^1-\uvec{u}_n-\Delta t \sum_{\ell=0}^{M} \theta^1_\ell \uvec{G}(t^\ell,\uvec{u}^\ell)\\
		\vdots\\
		\uvec{u}^M-\uvec{u}_n-\Delta t \sum_{\ell=0}^{M} \theta^M_\ell \uvec{G}(t^\ell,\uvec{u}^\ell)\\
	\end{pmatrix},
	\quad
	\underline{\uvec{u}}=\left(
	\begin{array}{ccc}
		\uvec{u}^1\\
		\vdots\\
		\uvec{u}^M
	\end{array}
	\right),
\end{equation}
where $\theta^m_\ell$ are the normalized quadrature weights.
The explicit low order operator $\lopdt^1:\R^{(M\times N_c)}\to\R^{(M\times N_c)}$ is instead obtained through a simple Euler approximation
\begin{equation}
	\label{l1ODE}
	\lopdt^1(\underline{\uvec{u}}) = \begin{pmatrix}
		
		\uvec{u}^1-\uvec{u}_n-\Delta t \beta^1 \uvec{G}(t_n,\uvec{u}_n)\\
		\vdots\\
		\uvec{u}^M-\uvec{u}_n-\Delta t \beta^M \uvec{G}(t_n,\uvec{u}_n)\\
	\end{pmatrix},
\end{equation}
where $\beta^m:=\frac{t^m-t_n}{\Delta t}$.

One can observe that solving the problem $\lopdt^2(\usol)=\uvec{0}$ is rather difficult, as it consists of a nonlinear system in all the unknown approximated values of the solution. Actually, the problem is equivalent to a fully implicit RK scheme and, in particular, for Gauss--Lobatto subtimendodes, one obtains the well--known LobattoIIIA methods~\cite{hairer1987solving}.
On the other hand, solving $\lopdt^1(\bbu)=\undr$ with given $\undr \in \R^{(M\times N_c)}$ is straightforward and, for $\undr=\uvec{0}$, its solution is first order accurate.

One can prove~\cite{lore_phd_thesis,micalizzi2023new} that the previously defined operators~\eqref{l2ODE} and~\eqref{l1ODE} fulfill the hypotheses of Theorem~\ref{th:DeC}. 
Then, introducing the vector $\underline{\uvec{u}}^{(p)} \in \R^{(M\times N_c)}:=\left(\uvec{u}^{1,(p)}, \dots, \uvec{u}^{M,(p)} \right)^T$, made by $M$ components $\uvec{u}^{m,(p)}\in \R^{N_c}$, associated to the subtimenodes $t^m$ $m=1,\dots,M$, the generic DeC iteration~\eqref{eq:DeC_iteration} reduces to
\begin{align}\label{eq:DeCODE_Remi}
	\uvec{u}^{m,(p)} = \uvec{u}_n+\Delta t \sum_{\ell=0}^{M} \theta^m_\ell \uvec{G}(t^\ell,\uvec{u}^{\ell,(p-1)}), \quad m=1,\dots,M.
\end{align}
We set $\uvec{u}^{m,(p)}:=\uvec{u}_n$ whenever we have $m=0$ or $p=0$, so to define the initial vector with all components equal to $\uvec{u}_n$, and we perform $P$ iterations, equal to the desired order of accuracy, after which we set $\uvec{u}_{n+1}:= \uvec{u}^{M,(P)}$.
In particular, in order to have accuracy $P$, we consider $M+1$ subtimenodes with $M=\left \lceil \frac{P}{2}\right \rceil $, as Gauss--Lobatto subtimenodes are assumed.
As shown in~\cite{micalizzi2023new,lore_phd_thesis}, bDeC methods can be regarded as explicit RK methods with $M(P - 1) + 1$ stages.

In the following, in order to lighten the notation especially in labels, we will refer to the described bDeC method simply as ``DeC''.

\section{Numerical results}\label{sec:numerical_results}
In this section, we test WENO--DeC with the numerical fluxes reported in Section~\ref{sec:fluxes} on several benchmarks, involving smooth and non--smooth solutions.
In particular, we provide results for the one--dimensional and two--dimensional Euler equations in Sections~\ref{sec:Euler_1d} and~\ref{sec:Euler_2d} respectively. 
The problems are carefully selected to put in evidence the differences among the investigates numerical fluxes.
Before going to the results, let us give a brief description of some elements adopted in the context of the numerical tests.

The time step is here computed as 
\begin{equation}
	\Delta t:= C_{CFL} \min{\left(\frac{\Delta x}{\max{(s^x)}},\frac{\Delta y}{\max{(s^y)}}\right)},
	\label{eq:CFL}
\end{equation}
where $s^x$ and $s^y$ represent estimates of the maximum local wave speeds in absolute value along the $x$- and $y$-direction respectively, and $C_{CFL}$ is a constant.
In particular, here we consider a direct estimate with $s^x:=\abs{u}+\sqrt{\gamma \frac{p}{\rho}}$ and $s^y:=\abs{v}+\sqrt{\gamma \frac{p}{\rho}}$, where $\rho,u,v,p$ are obtained from the cell averages at time $t_n$.
As a general rule, we tried to run each test starting by $C_{CFL}=0.95$ and $C_{CFL}=0.45$ in one and two dimensions respectively, lowering $C_{CFL}$ when simulation crashes occurred. Let us remark that the associated (linear) stability overbounds are given by $C_{CFL}^{max}=1$ and $C_{CFL}^{max}=0.5$.

\begin{remark}[On the time step computation]
	Note that the simple wave speed estimates for $s^x$ and $s^y$ based on the naive evaluation of the required quantities from the cell averages at time $t_n$, adopted here as in many other references, may fail to capture the correct wave speeds and may therefore lead to violation of the Courant--Friedrichs--Lewy (CFL) stability condition even for~$C_{CFL}<C_{CFL}^{max}$ triggering instabilities, for example in Riemann problems with zero initial velocity.
	If one adopts rigorous wave speed estimates bounding the real wave speeds, the linear CFL stability constraints mentioned, $C_{CFL}^{max}=1$ and $C_{CFL}^{max}=0.5$ in one and two space dimensions, are strictly valid for first order FV methods obtained with all investigated numerical fluxes, with two exceptions: 
	in the two--dimensional case, first order methods obtained with both LxF and FORCE are linearly unstable in the setting of simultaneous update formulas~\cite{toro2000centred}, such as~\eqref{eq:FV_semidiscretization}.
	Linearly stable versions of both LxF and FORCE in multiple space dimensions on unstructured meshes have been constructed in~\cite{toro2009force} and in~\cite{dumbser2010force}, in conservative and non--conservative setting respectively.
\end{remark}

In all reported simulations, the order of the time discretization is chosen in such a way to match the one of the space discretization, and we apply the WENO reconstruction to characteristic variables, as outlined in Section~\ref{sec:characteristic_WENO}.
Furthermore, let us recall that, as described in Remark~\ref{rmk:negative_weights}, the four--point Gauss--Legendre quadrature rule has been adopted for WENO5 in two space dimensions for the surface integrals. Instead, in all other two--dimensional cases, we considered the Gauss--Legendre quadrature formula with the minimal number of points needed in order to reach the desired accuracy.
%
In the presented tests, initialization consists in the simple approximation of cell averages via numerical integration at the beginning of the simulation. The quadrature formula adopted is the Gauss--Legendre quadrature formula with the minimal number of points sufficient to achieve the desired spatial accuracy.
%
Finally, for what follows, it is useful to introduce the concept of ``efficiency'', representing the error with respect to the computational time.
We say that a scheme is more efficient than a competitor if it is able to achieve smaller error for the same computation time, conversely, we say that a scheme is less efficient than a competitor if it realizes bigger error for the same computation time.

\subsection{One--dimensional Euler equations}\label{sec:Euler_1d}
In this section, we perform tests on the one--dimensional Euler equations. 
We start by the simple advection of a smooth density, in Section~\ref{sec:Euler_1d_sin4}, in order to verify the order of accuracy of the discretizations, and to compare the performance of the numerical fluxes on smooth problems.
Then, we continue with tests involving discontinuities.
In particular, we consider selected problems, which allow to better put in evidence the impact of the numerical flux.
In Sections~\ref{sec:Euler_1d_RP1}, \ref{sec:Euler_1d_RP5}, \ref{sec:Euler_1d_RP6} and~\ref{sec:Euler_1d_RP7}, we compare the schemes on four challenging Riemann problems taken from~\cite[Section 10.8]{ToroBook}.
%
%
These are very tough tests with different nature, specifically designed to evaluate various features of numerical schemes. 
A high level of robustness from numerical approaches is required in order to be able to handle them, and many classical methods experience simulation crashes in this context.
As mentioned in the reference, even when no crashes occur, spurious oscillations are not uncommon in numerical simulations on these tests.
For convenience, we report the related test informations in Table~\ref{tab:Euler_1d_RP}, and we refer to the tests through their original test numbers from~\cite[Section 10.8]{ToroBook}.
For all of them, the domain is $[0,1]$, transmissive boundary conditions are used and $x_d$ represents the initial location of the discontinuity.

\begin{table}
	\centering
	\begin{tabular}{|c||c|c|c||c|c|c||c||c|}
		\hline
		Test& $\rho_L$ & $u_L$ & $p_L$& $\rho_R$& $u_R$ & $p_R$ & $x_d$ & $T_f$\\\hline\hline
		1   &      1.0 &  0.75 &   1.0&    0.125&   0.0 &  0.1  &  0.3  & 0.2 \\
   		\hline
		5   &     1.0  &-19.59745&1000.0& 1.0 & -19.59745 & 0.01 &  0.8 &  0.012\\
   		\hline
		6   &     1.4  & 0.0    & 1.0 & 1.0 & 0.0 & 1.0 &  0.5 &  2.0\\
   		\hline
		7   &     1.4  & 0.1    & 1.0 & 1.0 & 0.1 & 1.0 &  0.5 &  2.0\\
		\hline
	\end{tabular}
	\caption{Test informations for Riemann problems 1, 5, 6 and 7 from~\cite[Section 10.8]{ToroBook}}\label{tab:Euler_1d_RP}
\end{table}

In Section~\ref{sec:Euler_1d_titarev_toro}, we report results for a shock--turbulence interaction problem. In particular, we consider the tough modification proposed in~\cite{titarev2004finite} of the original test introduced by Shu and Osher in~\cite{shu1989efficient}.
Finally, in Section~\ref{sec:Euler_1d_omitted_results}, we report some comments concerning numerical results on tests that have been omitted in this work for the sake of compactness and clarity.

The exact reference solutions to the problems discussed in Sections~\ref{sec:Euler_1d_RP1}, \ref{sec:Euler_1d_RP5}, \ref{sec:Euler_1d_RP6} and \ref{sec:Euler_1d_RP7} have been computed using the library NUMERICA~\cite{toro1999numerica}.
For the test reported in Section~\ref{sec:Euler_1d_titarev_toro}, no exact solution is available. The associated reference solution has been obtained by adopting a FV method with second order accurate van Leer's minmod spatial discretization~\cite{AbgrallMishranotes} and SSPRK2 on a very refined mesh of 200,000 cells,
$C_{CFL}:=0.5$, reconstruction of characteristic variables and exact Riemann solver as numerical flux.

\subsubsection{Advection of smooth density}\label{sec:Euler_1d_sin4}
This test is taken from~\cite{toro2005tvd} and is meant to assess the order of accuracy of the discretizations under investigation and the performance of different numerical fluxes for regular solutions. 
On the computational domain $\Omega:=[-1,1]$ with periodic boundary conditions, we consider the advection of a smooth density profile. 
Namely, we consider the initial condition (given in terms of primitive variables)
\begin{align}
	\begin{cases}
		\rho_0(x)&:=2+\sin^4{\left(\pi x\right)},\\
		u_0(x)&:=u_\infty, \\
		p_0(x)&:=p_\infty,
	\end{cases}
\end{align}
with $u_\infty:=1$ and $p_\infty:=1.$ The corresponding exact solution is given by $\uvec{u}(x,t):=\uvec{u}_0(x-u_{\infty}t)$, which concretely speaking consists in an advection of the density.
Even though the test may look ``simple'', it does not lack challenging aspects:
let us, in fact, remark that ENO schemes are well known to exhibit weak performance on this problem~\cite{rogerson1990numerical,shu1990numerical}.

As in~\cite{toro2005tvd}, we consider here a final time $T_f:=2$, corresponding to one period of the solution, and $C_{CFL}:=0.95.$
The errors reported here are related to the density, but analogous results have been obtained for all other variables and are hence omitted to save space.

We ran convergence analyses with all the 8 numerical fluxes for orders 3, 5 and 7.
Convergence tables displaying errors in $L^1$-, $L^2$- and $L^{\infty}$-norms along with computational times, for meshes of $N$ elements, are reported in Tables~\ref{tab:Euler_1d_sin4_convergence_table_WENO3_DeC3},~\ref{tab:Euler_1d_sin4_convergence_table_WENO5_DeC5} and~\ref{tab:Euler_1d_sin4_convergence_table_WENO7_DeC7}.
The expected order of accuracy has been obtained in all cases in all the three considered norms.
Let us remark that such a desirable feature is not achieved by other approaches, especially for what concerns the $L^{\infty}$-norm.
Let us notice that WENO3--DeC3 asymptotically shows a slightly superconvergent character.

\begin{table}[htbp]
	\centering
	\caption{One--dimensional Euler equations, Advection of smooth density: convergence tables for WENO3--DeC3}
	\label{tab:Euler_1d_sin4_convergence_table_WENO3_DeC3}
	\scalebox{0.65}{ 
		\begin{tabular}{c c c c c c c c}
			\toprule
			\multirow{2}{*}{$N$} & \multicolumn{2}{c}{$L^1$ error $\rho$} & \multicolumn{2}{c}{$L^2$ error $\rho$} & \multicolumn{2}{c}{$L^{\infty}$ error $\rho$} & \multirow{2}{*}{CPU Time} \\
			\cmidrule(lr){2-3} \cmidrule(lr){4-5} \cmidrule(lr){6-7}
			& Error & Order & Error & Order & Error & Order & \\
			\midrule
			
			\multicolumn{8}{c}{\textbf{LxF}} \\ 
			\midrule
			160  &   3.251e-02  &  $-$  &   3.515e-02  &  $-$  &   7.146e-02  &  $-$  &   1.418e-01 \\ 
			320  &   6.488e-03  &  2.325  &   9.232e-03  &  1.929  &   2.390e-02  &  1.580  &   5.219e-01 \\ 
			640  &   9.022e-04  &  2.846  &   1.640e-03  &  2.493  &   5.711e-03  &  2.065  &   1.983e+00 \\ 
			1280  &   7.545e-05  &  3.580  &   1.452e-04  &  3.498  &   6.825e-04  &  3.065  &   7.919e+00 \\ 
			2560  &   4.631e-06  &  4.026  &   7.191e-06  &  4.336  &   3.092e-05  &  4.464  &   3.174e+01 \\ 
			5120  &   2.716e-07  &  4.092  &   3.291e-07  &  4.450  &   1.039e-06  &  4.895  &   1.254e+02 \\ 
			\midrule

			\multicolumn{8}{c}{\textbf{FORCE}} \\ 
			\midrule
			160  &   2.263e-02  &  $-$  &   2.582e-02  &  $-$  &   5.550e-02  &  $-$  &   1.466e-01 \\ 
			320  &   4.415e-03  &  2.358  &   6.668e-03  &  1.953  &   1.844e-02  &  1.590  &   5.481e-01 \\ 
			640  &   5.967e-04  &  2.887  &   1.143e-03  &  2.545  &   4.257e-03  &  2.115  &   2.077e+00 \\ 
			1280  &   4.764e-05  &  3.647  &   9.412e-05  &  3.602  &   4.650e-04  &  3.194  &   8.323e+00 \\ 
			2560  &   2.932e-06  &  4.022  &   4.561e-06  &  4.367  &   1.971e-05  &  4.561  &   3.358e+01 \\ 
			5120  &   1.723e-07  &  4.089  &   2.088e-07  &  4.449  &   6.595e-07  &  4.901  &   1.316e+02 \\ 
			\midrule

			\multicolumn{8}{c}{\textbf{Rus}} \\ 
			\midrule
			160  &   2.948e-02  &  $-$  &   3.206e-02  &  $-$  &   6.613e-02  &  $-$  &   1.412e-01 \\ 
			320  &   5.847e-03  &  2.334  &   8.373e-03  &  1.937  &   2.210e-02  &  1.581  &   5.132e-01 \\ 
			640  &   8.023e-04  &  2.865  &   1.469e-03  &  2.511  &   5.221e-03  &  2.082  &   1.970e+00 \\ 
			1280  &   6.646e-05  &  3.594  &   1.273e-04  &  3.528  &   6.078e-04  &  3.103  &   8.121e+00 \\ 
			2560  &   4.104e-06  &  4.017  &   6.272e-06  &  4.344  &   2.696e-05  &  4.495  &   3.208e+01 \\ 
			5120  &   2.422e-07  &  4.083  &   2.883e-07  &  4.443  &   9.049e-07  &  4.897  &   1.272e+02 \\ 
			\midrule

			\multicolumn{8}{c}{\textbf{HLL}} \\ 
			\midrule
			160  &   1.908e-02  &  $-$  &   2.233e-02  &  $-$  &   4.937e-02  &  $-$  &   1.583e-01 \\ 
			320  &   3.706e-03  &  2.364  &   5.755e-03  &  1.957  &   1.641e-02  &  1.590  &   5.629e-01 \\ 
			640  &   4.969e-04  &  2.899  &   9.711e-04  &  2.567  &   3.729e-03  &  2.137  &   2.187e+00 \\ 
			1280  &   3.881e-05  &  3.678  &   7.751e-05  &  3.647  &   3.909e-04  &  3.254  &   8.729e+00 \\ 
			2560  &   2.391e-06  &  4.021  &   3.723e-06  &  4.380  &   1.611e-05  &  4.601  &   3.464e+01 \\ 
			5120  &   1.407e-07  &  4.087  &   1.706e-07  &  4.448  &   5.387e-07  &  4.902  &   1.385e+02 \\ 
			\midrule

			\multicolumn{8}{c}{\textbf{CU}} \\ 
			\midrule
			160  &   1.908e-02  &  $-$  &   2.233e-02  &  $-$  &   4.937e-02  &  $-$  &   1.414e-01 \\ 
			320  &   3.706e-03  &  2.364  &   5.755e-03  &  1.957  &   1.641e-02  &  1.590  &   5.338e-01 \\ 
			640  &   4.969e-04  &  2.899  &   9.711e-04  &  2.567  &   3.729e-03  &  2.137  &   2.033e+00 \\ 
			1280  &   3.881e-05  &  3.678  &   7.751e-05  &  3.647  &   3.909e-04  &  3.254  &   8.068e+00 \\ 
			2560  &   2.391e-06  &  4.021  &   3.723e-06  &  4.380  &   1.611e-05  &  4.601  &   3.182e+01 \\ 
			5120  &   1.407e-07  &  4.087  &   1.706e-07  &  4.448  &   5.387e-07  &  4.902  &   1.284e+02 \\ 
			\midrule

			\multicolumn{8}{c}{\textbf{LDCU}} \\ 
			\midrule
			160  &   1.908e-02  &  $-$  &   2.233e-02  &  $-$  &   4.937e-02  &  $-$  &   1.525e-01 \\ 
			320  &   3.706e-03  &  2.364  &   5.755e-03  &  1.957  &   1.641e-02  &  1.590  &   5.619e-01 \\ 
			640  &   4.969e-04  &  2.899  &   9.711e-04  &  2.567  &   3.729e-03  &  2.137  &   2.158e+00 \\ 
			1280  &   3.881e-05  &  3.678  &   7.751e-05  &  3.647  &   3.909e-04  &  3.254  &   8.737e+00 \\ 
			2560  &   2.391e-06  &  4.021  &   3.723e-06  &  4.380  &   1.611e-05  &  4.601  &   3.401e+01 \\ 
			5120  &   1.407e-07  &  4.087  &   1.706e-07  &  4.448  &   5.387e-07  &  4.902  &   1.389e+02 \\ 
			\midrule

			\multicolumn{8}{c}{\textbf{HLLC}} \\ 
			\midrule
			160  &   1.908e-02  &  $-$  &   2.233e-02  &  $-$  &   4.937e-02  &  $-$  &   1.670e-01 \\ 
			320  &   3.706e-03  &  2.364  &   5.755e-03  &  1.957  &   1.641e-02  &  1.590  &   6.246e-01 \\ 
			640  &   4.969e-04  &  2.899  &   9.711e-04  &  2.567  &   3.729e-03  &  2.137  &   2.455e+00 \\ 
			1280  &   3.881e-05  &  3.678  &   7.751e-05  &  3.647  &   3.909e-04  &  3.254  &   9.511e+00 \\ 
			2560  &   2.391e-06  &  4.021  &   3.723e-06  &  4.380  &   1.611e-05  &  4.601  &   3.866e+01 \\ 
			5120  &   1.407e-07  &  4.087  &   1.706e-07  &  4.448  &   5.387e-07  &  4.902  &   1.530e+02 \\ 
			\midrule

			\multicolumn{8}{c}{\textbf{Ex.RS}} \\ 
			\midrule
			160  &   1.908e-02  &  $-$  &   2.233e-02  &  $-$  &   4.937e-02  &  $-$  &   1.685e-01 \\ 
			320  &   3.706e-03  &  2.364  &   5.755e-03  &  1.957  &   1.641e-02  &  1.590  &   6.262e-01 \\ 
			640  &   4.969e-04  &  2.899  &   9.711e-04  &  2.567  &   3.729e-03  &  2.137  &   2.440e+00 \\ 
			1280  &   3.881e-05  &  3.678  &   7.751e-05  &  3.647  &   3.909e-04  &  3.254  &   9.535e+00 \\ 
			2560  &   2.391e-06  &  4.021  &   3.723e-06  &  4.380  &   1.611e-05  &  4.601  &   3.807e+01 \\ 
			5120  &   1.407e-07  &  4.087  &   1.706e-07  &  4.448  &   5.387e-07  &  4.902  &   1.527e+02 \\ 
			\midrule

			\bottomrule
	\end{tabular}}
\end{table}

\begin{table}[htbp]
	\centering
	\caption{One--dimensional Euler equations, Advection of smooth density: convergence tables for WENO5--DeC5}
	\label{tab:Euler_1d_sin4_convergence_table_WENO5_DeC5}
	\scalebox{0.65}{ 
		\begin{tabular}{c c c c c c c c}
			\toprule
			\multirow{2}{*}{$N$} & \multicolumn{2}{c}{$L^1$ error $\rho$} & \multicolumn{2}{c}{$L^2$ error $\rho$} & \multicolumn{2}{c}{$L^{\infty}$ error $\rho$} & \multirow{2}{*}{CPU Time} \\
			\cmidrule(lr){2-3} \cmidrule(lr){4-5} \cmidrule(lr){6-7}
			& Error & Order & Error & Order & Error & Order & \\
			\midrule
			
			\multicolumn{8}{c}{\textbf{LxF}} \\ 
			\midrule
			80  &   2.501e-03  &  $-$  &   2.065e-03  &  $-$  &   2.753e-03  &  $-$  &   1.129e-01 \\ 
			160  &   1.097e-04  &  4.512  &   1.083e-04  &  4.253  &   2.123e-04  &  3.697  &   3.674e-01 \\ 
			320  &   2.995e-06  &  5.195  &   2.874e-06  &  5.236  &   6.349e-06  &  5.063  &   1.396e+00 \\ 
			640  &   7.039e-08  &  5.411  &   6.082e-08  &  5.562  &   1.106e-07  &  5.843  &   5.475e+00 \\ 
			1280  &   1.610e-09  &  5.450  &   1.319e-09  &  5.527  &   1.786e-09  &  5.953  &   2.193e+01 \\ 
			2560  &   3.816e-11  &  5.399  &   3.341e-11  &  5.304  &   4.982e-11  &  5.163  &   8.775e+01 \\ 
			\midrule

			\multicolumn{8}{c}{\textbf{FORCE}} \\ 
			\midrule
			80  &   1.830e-03  &  $-$  &   1.548e-03  &  $-$  &   2.517e-03  &  $-$  &   1.175e-01 \\ 
			160  &   7.063e-05  &  4.695  &   7.010e-05  &  4.464  &   1.442e-04  &  4.126  &   3.823e-01 \\ 
			320  &   1.894e-06  &  5.221  &   1.833e-06  &  5.258  &   4.133e-06  &  5.125  &   1.439e+00 \\ 
			640  &   4.461e-08  &  5.408  &   3.856e-08  &  5.571  &   7.011e-08  &  5.881  &   5.709e+00 \\ 
			1280  &   1.020e-09  &  5.451  &   8.360e-10  &  5.527  &   1.131e-09  &  5.953  &   2.261e+01 \\ 
			2560  &   2.418e-11  &  5.399  &   2.117e-11  &  5.304  &   3.161e-11  &  5.162  &   9.041e+01 \\ 
			\midrule

			\multicolumn{8}{c}{\textbf{Rus}} \\ 
			\midrule
			80  &   2.347e-03  &  $-$  &   1.954e-03  &  $-$  &   2.660e-03  &  $-$  &   1.166e-01 \\ 
			160  &   1.013e-04  &  4.534  &   1.015e-04  &  4.267  &   2.013e-04  &  3.724  &   3.734e-01 \\ 
			320  &   2.769e-06  &  5.193  &   2.697e-06  &  5.234  &   6.033e-06  &  5.061  &   1.414e+00 \\ 
			640  &   6.457e-08  &  5.422  &   5.632e-08  &  5.582  &   1.048e-07  &  5.847  &   5.508e+00 \\ 
			1280  &   1.463e-09  &  5.464  &   1.194e-09  &  5.559  &   1.556e-09  &  6.073  &   2.225e+01 \\ 
			2560  &   3.433e-11  &  5.413  &   2.979e-11  &  5.325  &   4.358e-11  &  5.158  &   8.896e+01 \\ 
			\midrule

			\multicolumn{8}{c}{\textbf{HLL}} \\ 
			\midrule
			80  &   1.586e-03  &  $-$  &   1.371e-03  &  $-$  &   2.391e-03  &  $-$  &   1.234e-01 \\ 
			160  &   5.795e-05  &  4.775  &   5.782e-05  &  4.567  &   1.214e-04  &  4.299  &   4.002e-01 \\ 
			320  &   1.545e-06  &  5.230  &   1.500e-06  &  5.268  &   3.435e-06  &  5.144  &   1.519e+00 \\ 
			640  &   3.636e-08  &  5.409  &   3.144e-08  &  5.576  &   5.727e-08  &  5.906  &   5.976e+00 \\ 
			1280  &   8.323e-10  &  5.449  &   6.821e-10  &  5.527  &   9.233e-10  &  5.955  &   2.371e+01 \\ 
			2560  &   1.972e-11  &  5.399  &   1.727e-11  &  5.304  &   2.579e-11  &  5.162  &   9.545e+01 \\ 
			\midrule

			\multicolumn{8}{c}{\textbf{CU}} \\ 
			\midrule
			80  &   1.586e-03  &  $-$  &   1.371e-03  &  $-$  &   2.391e-03  &  $-$  &   1.156e-01 \\ 
			160  &   5.795e-05  &  4.775  &   5.782e-05  &  4.567  &   1.214e-04  &  4.299  &   3.763e-01 \\ 
			320  &   1.545e-06  &  5.230  &   1.500e-06  &  5.268  &   3.435e-06  &  5.144  &   1.410e+00 \\ 
			640  &   3.636e-08  &  5.409  &   3.144e-08  &  5.576  &   5.727e-08  &  5.906  &   5.708e+00 \\ 
			1280  &   8.323e-10  &  5.449  &   6.821e-10  &  5.527  &   9.233e-10  &  5.955  &   2.219e+01 \\ 
			2560  &   1.972e-11  &  5.399  &   1.727e-11  &  5.304  &   2.577e-11  &  5.163  &   8.861e+01 \\ 
			\midrule

			\multicolumn{8}{c}{\textbf{LDCU}} \\ 
			\midrule
			80  &   1.586e-03  &  $-$  &   1.371e-03  &  $-$  &   2.391e-03  &  $-$  &   1.217e-01 \\ 
			160  &   5.795e-05  &  4.775  &   5.782e-05  &  4.567  &   1.214e-04  &  4.299  &   3.945e-01 \\ 
			320  &   1.545e-06  &  5.230  &   1.500e-06  &  5.268  &   3.435e-06  &  5.144  &   1.606e+00 \\ 
			640  &   3.636e-08  &  5.409  &   3.144e-08  &  5.576  &   5.727e-08  &  5.906  &   5.884e+00 \\ 
			1280  &   8.323e-10  &  5.449  &   6.821e-10  &  5.527  &   9.233e-10  &  5.955  &   2.333e+01 \\ 
			2560  &   1.972e-11  &  5.399  &   1.727e-11  &  5.304  &   2.577e-11  &  5.163  &   9.270e+01 \\ 
			\midrule

			\multicolumn{8}{c}{\textbf{HLLC}} \\ 
			\midrule
			80  &   1.586e-03  &  $-$  &   1.371e-03  &  $-$  &   2.391e-03  &  $-$  &   1.317e-01 \\ 
			160  &   5.795e-05  &  4.775  &   5.782e-05  &  4.567  &   1.214e-04  &  4.299  &   4.361e-01 \\ 
			320  &   1.545e-06  &  5.230  &   1.500e-06  &  5.268  &   3.435e-06  &  5.144  &   1.667e+00 \\ 
			640  &   3.636e-08  &  5.409  &   3.144e-08  &  5.576  &   5.727e-08  &  5.906  &   6.532e+00 \\ 
			1280  &   8.323e-10  &  5.449  &   6.821e-10  &  5.527  &   9.233e-10  &  5.955  &   2.609e+01 \\ 
			2560  &   1.972e-11  &  5.399  &   1.727e-11  &  5.304  &   2.576e-11  &  5.164  &   1.012e+02 \\ 
			\midrule

			\multicolumn{8}{c}{\textbf{Ex.RS}} \\ 
			\midrule
			80  &   1.586e-03  &  $-$  &   1.371e-03  &  $-$  &   2.391e-03  &  $-$  &   1.313e-01 \\ 
			160  &   5.795e-05  &  4.775  &   5.782e-05  &  4.567  &   1.214e-04  &  4.299  &   4.377e-01 \\ 
			320  &   1.545e-06  &  5.230  &   1.500e-06  &  5.268  &   3.435e-06  &  5.144  &   1.648e+00 \\ 
			640  &   3.636e-08  &  5.409  &   3.144e-08  &  5.576  &   5.727e-08  &  5.906  &   6.490e+00 \\ 
			1280  &   8.323e-10  &  5.449  &   6.821e-10  &  5.527  &   9.233e-10  &  5.955  &   2.654e+01 \\ 
			2560  &   1.972e-11  &  5.399  &   1.727e-11  &  5.304  &   2.579e-11  &  5.162  &   1.019e+02 \\ 
			\midrule

			\bottomrule
	\end{tabular}}
\end{table}

\begin{table}[htbp]
	\centering
	\caption{One--dimensional Euler equations, Advection of smooth density: convergence tables for WENO7--DeC7}
	\label{tab:Euler_1d_sin4_convergence_table_WENO7_DeC7}
	\scalebox{0.65}{ 
		\begin{tabular}{c c c c c c c c}
			\toprule
			\multirow{2}{*}{$N$} & \multicolumn{2}{c}{$L^1$ error $\rho$} & \multicolumn{2}{c}{$L^2$ error $\rho$} & \multicolumn{2}{c}{$L^{\infty}$ error $\rho$} & \multirow{2}{*}{CPU Time} \\
			\cmidrule(lr){2-3} \cmidrule(lr){4-5} \cmidrule(lr){6-7}
			& Error & Order & Error & Order & Error & Order & \\
			\midrule
			
			\multicolumn{8}{c}{\textbf{LxF}} \\ 
			\midrule
			80  &   2.963e-04  &  $-$  &   3.204e-04  &  $-$  &   5.555e-04  &  $-$  &   3.807e-01 \\ 
			160  &   1.133e-06  &  8.031  &   1.304e-06  &  7.941  &   2.845e-06  &  7.609  &   1.281e+00 \\ 
			320  &   8.000e-09  &  7.146  &   1.437e-08  &  6.503  &   7.014e-08  &  5.342  &   4.862e+00 \\ 
			640  &   5.527e-11  &  7.177  &   1.353e-10  &  6.731  &   8.850e-10  &  6.308  &   1.953e+01 \\ 
			1280  &   2.484e-13  &  7.798  &   4.772e-13  &  8.147  &   3.010e-12  &  8.200  &   7.648e+01 \\ 
			\midrule

			\multicolumn{8}{c}{\textbf{FORCE}} \\ 
			\midrule
			80  &   2.048e-04  &  $-$  &   2.230e-04  &  $-$  &   3.944e-04  &  $-$  &   3.906e-01 \\ 
			160  &   7.435e-07  &  8.106  &   8.809e-07  &  7.984  &   2.014e-06  &  7.613  &   1.295e+00 \\ 
			320  &   5.247e-09  &  7.147  &   9.661e-09  &  6.511  &   4.914e-08  &  5.357  &   4.864e+00 \\ 
			640  &   3.537e-11  &  7.213  &   8.797e-11  &  6.779  &   5.938e-10  &  6.371  &   1.966e+01 \\ 
			1280  &   1.601e-13  &  7.787  &   3.014e-13  &  8.189  &   1.908e-12  &  8.282  &   7.783e+01 \\ 
			\midrule

			\multicolumn{8}{c}{\textbf{Rus}} \\ 
			\midrule
			80  &   2.800e-04  &  $-$  &   3.045e-04  &  $-$  &   5.314e-04  &  $-$  &   4.021e-01 \\ 
			160  &   1.042e-06  &  8.070  &   1.205e-06  &  7.981  &   2.542e-06  &  7.707  &   1.274e+00 \\ 
			320  &   7.211e-09  &  7.175  &   1.275e-08  &  6.563  &   6.275e-08  &  5.340  &   5.044e+00 \\ 
			640  &   4.890e-11  &  7.204  &   1.184e-10  &  6.751  &   7.818e-10  &  6.327  &   1.931e+01 \\ 
			1280  &   2.171e-13  &  7.816  &   4.164e-13  &  8.152  &   2.488e-12  &  8.296  &   7.844e+01 \\ 
			\midrule

			\multicolumn{8}{c}{\textbf{HLL}} \\ 
			\midrule
			80  &   1.751e-04  &  $-$  &   1.919e-04  &  $-$  &   3.411e-04  &  $-$  &   3.989e-01 \\ 
			160  &   6.198e-07  &  8.142  &   7.444e-07  &  8.010  &   1.736e-06  &  7.618  &   1.336e+00 \\ 
			320  &   4.359e-09  &  7.152  &   8.156e-09  &  6.512  &   4.220e-08  &  5.362  &   5.239e+00 \\ 
			640  &   2.897e-11  &  7.233  &   7.285e-11  &  6.807  &   4.985e-10  &  6.404  &   2.056e+01 \\ 
			1280  &   1.333e-13  &  7.763  &   2.501e-13  &  8.186  &   1.541e-12  &  8.338  &   8.056e+01 \\ 
			\midrule

			\multicolumn{8}{c}{\textbf{CU}} \\ 
			\midrule
			80  &   1.751e-04  &  $-$  &   1.919e-04  &  $-$  &   3.411e-04  &  $-$  &   3.854e-01 \\ 
			160  &   6.198e-07  &  8.142  &   7.444e-07  &  8.010  &   1.736e-06  &  7.618  &   1.287e+00 \\ 
			320  &   4.359e-09  &  7.152  &   8.156e-09  &  6.512  &   4.220e-08  &  5.362  &   4.927e+00 \\ 
			640  &   2.897e-11  &  7.233  &   7.285e-11  &  6.807  &   4.985e-10  &  6.404  &   1.994e+01 \\ 
			1280  &   1.679e-13  &  7.431  &   2.637e-13  &  8.110  &   1.551e-12  &  8.328  &   7.838e+01 \\ 
			\midrule

			\multicolumn{8}{c}{\textbf{LDCU}} \\ 
			\midrule
			80  &   1.751e-04  &  $-$  &   1.919e-04  &  $-$  &   3.411e-04  &  $-$  &   3.952e-01 \\ 
			160  &   6.198e-07  &  8.142  &   7.444e-07  &  8.010  &   1.736e-06  &  7.618  &   1.314e+00 \\ 
			320  &   4.359e-09  &  7.152  &   8.156e-09  &  6.512  &   4.220e-08  &  5.362  &   5.069e+00 \\ 
			640  &   2.897e-11  &  7.233  &   7.285e-11  &  6.807  &   4.985e-10  &  6.404  &   2.011e+01 \\ 
			1280  &   1.679e-13  &  7.431  &   2.637e-13  &  8.110  &   1.551e-12  &  8.328  &   8.010e+01 \\ 
			\midrule

			\multicolumn{8}{c}{\textbf{HLLC}} \\ 
			\midrule
			80  &   1.751e-04  &  $-$  &   1.919e-04  &  $-$  &   3.411e-04  &  $-$  &   4.183e-01 \\ 
			160  &   6.198e-07  &  8.142  &   7.444e-07  &  8.010  &   1.736e-06  &  7.618  &   1.376e+00 \\ 
			320  &   4.359e-09  &  7.152  &   8.156e-09  &  6.512  &   4.220e-08  &  5.362  &   5.494e+00 \\ 
			640  &   2.897e-11  &  7.233  &   7.285e-11  &  6.807  &   4.985e-10  &  6.404  &   2.116e+01 \\ 
			1280  &   1.354e-13  &  7.741  &   2.485e-13  &  8.195  &   1.512e-12  &  8.365  &   8.462e+01 \\ 
			\midrule

			\multicolumn{8}{c}{\textbf{Ex.RS}} \\ 
			\midrule
			80  &   1.751e-04  &  $-$  &   1.919e-04  &  $-$  &   3.411e-04  &  $-$  &   4.175e-01 \\ 
			160  &   6.198e-07  &  8.142  &   7.444e-07  &  8.010  &   1.736e-06  &  7.618  &   1.387e+00 \\ 
			320  &   4.359e-09  &  7.152  &   8.156e-09  &  6.512  &   4.220e-08  &  5.362  &   5.419e+00 \\ 
			640  &   2.897e-11  &  7.233  &   7.285e-11  &  6.807  &   4.985e-10  &  6.404  &   2.115e+01 \\ 
			1280  &   1.333e-13  &  7.763  &   2.501e-13  &  8.186  &   1.541e-12  &  8.338  &   8.507e+01 \\ 
			\midrule

			\bottomrule
	\end{tabular}}
\end{table}

The results of the convergence analysis and of the efficiency analysis in the $L^1$-norm, in terms of error versus time, are reported for all orders and numerical fluxes in Figure~\ref{fig:Euler_1d_sin4_WENODeC}, on the left and on the right side respectively.
We do not display graphical results for the other norms in order to save space, since in fact they are qualitatively analogous, as can be noticed from the convergence tables.
Only slight differences can be appreciated for different numerical fluxes on this test, which tend to diminish as the order of accuracy increases. Generally speaking,  upwind fluxes are more accurate than centred fluxes (LxF and FORCE), except for Rus which is less accurate than FORCE for this problem.
In fact, from the convergence analysis plot on the left, we see, as expected, LxF and Rus being more diffusive than the other numerical fluxes and producing higher errors for fixed mesh refinement; FORCE performs better than them but slightly worse than all remaining numerical fluxes.
From the efficiency analysis plot on the right, we see that LxF is the worst option.
It must be noticed that, for fixed order, lines corresponding to different numerical fluxes are hard to distinguish, especially for order 7.
This aspect will be put in evidence also in the next tests: the impact of the numerical flux is higher in lower order settings, as increasing the order tends to compensate the choice of a more diffusive numerical flux and to minimize the differences with respect to this choice.
Let us remark that, for this test, increasing the order of accuracy of the discretization results in huge computational advantages for all numerical fluxes, as can be observed from the efficiency analysis plot on the right in Figure~\ref{fig:Euler_1d_sin4_WENODeC}. Investigations of higher order discretizations within the WENO--DeC framework are planned for future works.

\begin{figure}[htbp]
	\centering
	\begin{subfigure}[t]{0.53\textwidth}
		\centering
		\includegraphics[width=\textwidth]{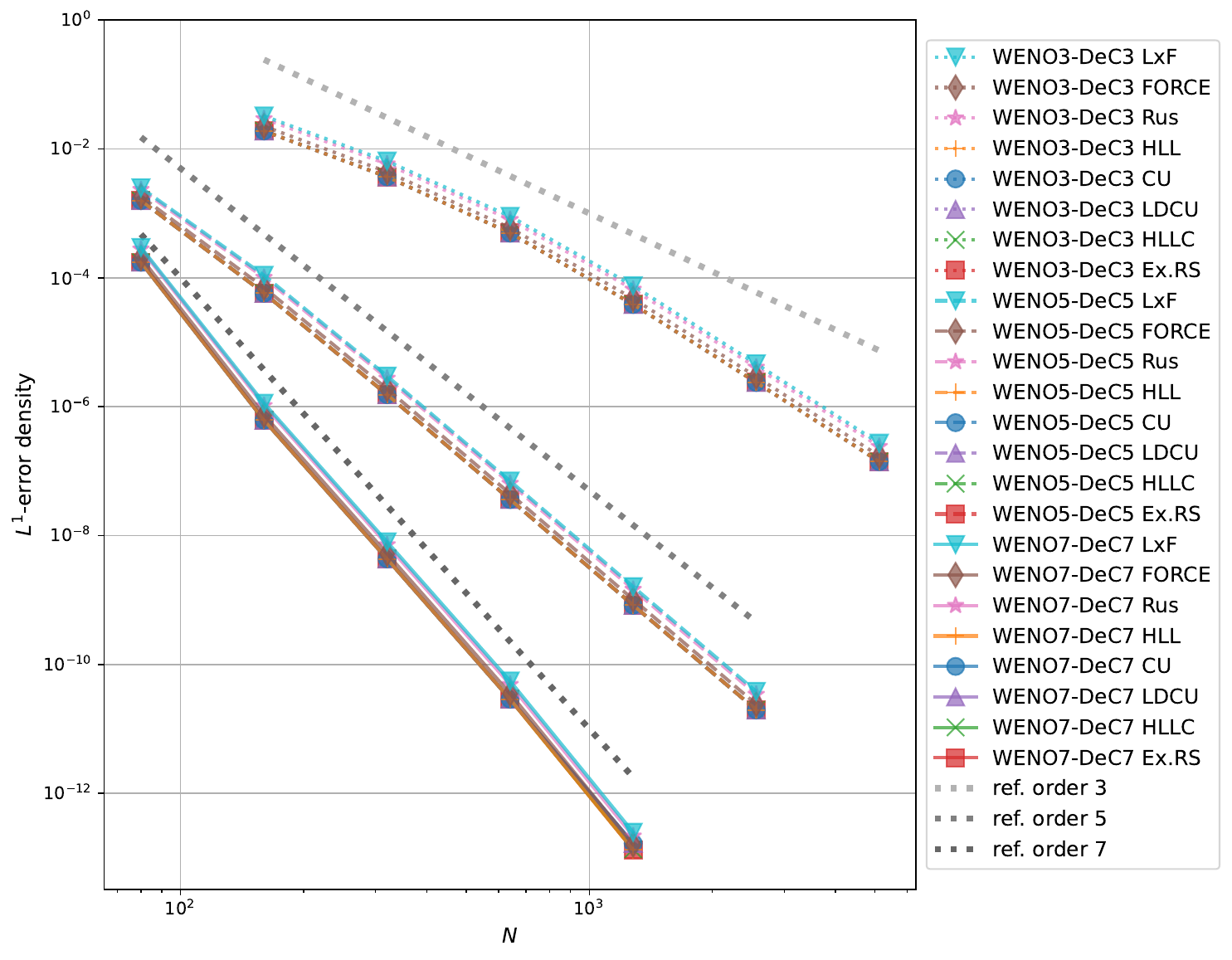}
		\caption{Convergence analysis on the density}
	\end{subfigure}
	\quad
	\begin{subfigure}[t]{0.43\textwidth}
		\centering
		\includegraphics[width=\textwidth]{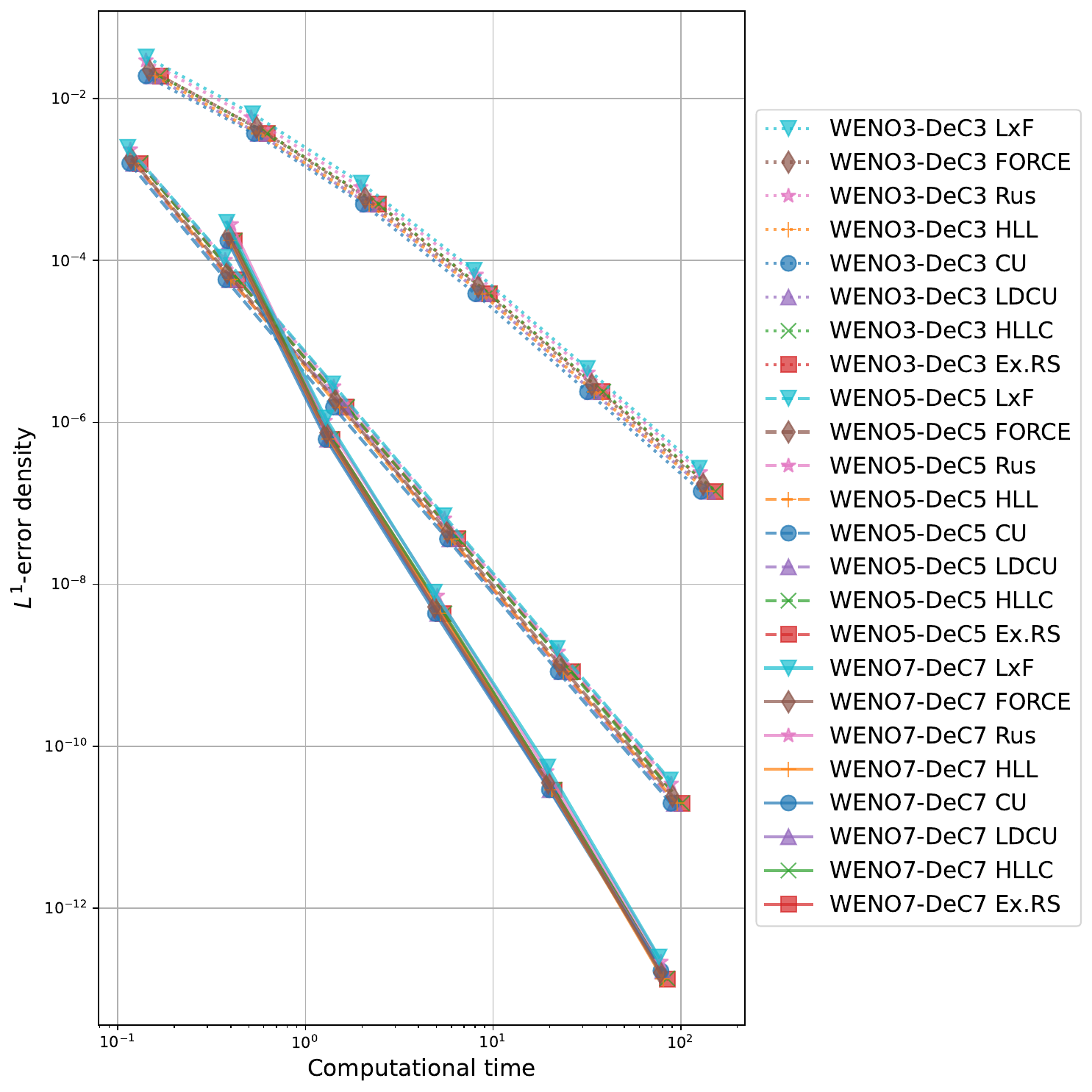}
		\caption{Error on the density versus time}
	\end{subfigure}
	\caption{One--dimensional Euler equations, Advection of smooth density: Convergence analysis and efficiency analysis for all numerical fluxes and orders}
	\label{fig:Euler_1d_sin4_WENODeC}
\end{figure}

\subsubsection{Riemann problem 1}\label{sec:Euler_1d_RP1}
This test consists in a challenging modification, introduced in~\cite{ToroBook}, of the well--known Sod shock tube~\cite{sod1978survey}.
In fact, while in the original test the initial velocity is uniformly equal to zero, in such a modification the left state moves towards right with speed $0.75$.
This determines the existence of a sonic point in the solution, whose correct handling constitutes a challenge for many numerical schemes~\cite{ToroBook}.

We could run the test without any problem with $C_{CFL}:=0.95$ for all numerical fluxes and orders for $100$ elements.
The obtained density profiles are reported in Figure~\ref{fig:Euler_1d_RP1_zoom_density}.
Before starting the comparison, let us notice that all the discretizations under investigation provide a correct handling of the sonic point, which is not trivial, as already stated.

The most evident differences among the different numerical fluxes can be observed in the description of the rarefaction wave, in panels A and B.
Ex.RS provides the sharpest description of such a feature followed by LDCU. LxF, FORCE and Rus have the worst performance. HLL, CU and HLLC give similar results of intermediate quality between LDCU and FORCE.
Indeed, also in this case, increasing the order of accuracy improves the quality of the results, however, it must be noticed how this has more impact on the more diffusive numerical fluxes.
The difference between the numerical fluxes in panels A and B tends to become smaller as the order increases, still being evident even for order 7.
In particular, the improvements for LxF, FORCE and Rus are bigger than the ones registered for Ex.RS, whose results are already very good for order 3.

Qualitatively similar results have been obtained for the Sod shock tube test, however, they have been omitted, due to the more challenging nature of this test, in order to save space.

\begin{figure}[htbp]
	\centering
	\begin{subfigure}[b]{1.\textwidth}
		\centering
		\includegraphics[width=\textwidth]{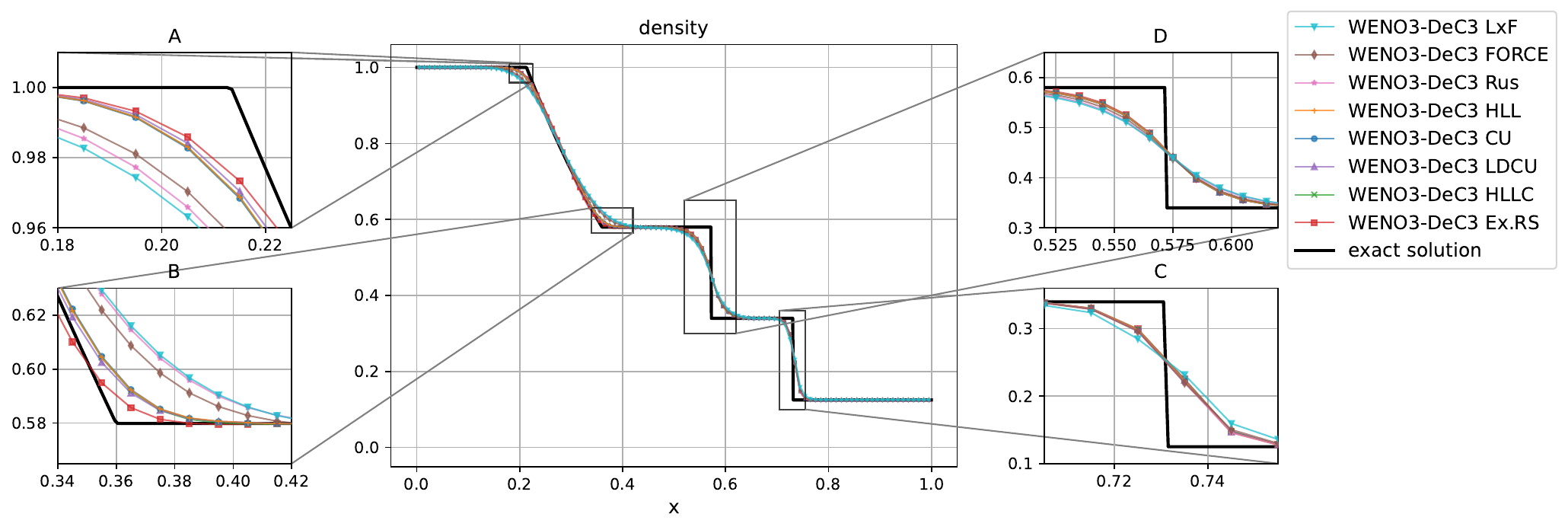}
		\caption{Order 3}
	\end{subfigure}\\
	\begin{subfigure}[b]{1.\textwidth}
		\centering
		\includegraphics[width=\textwidth]{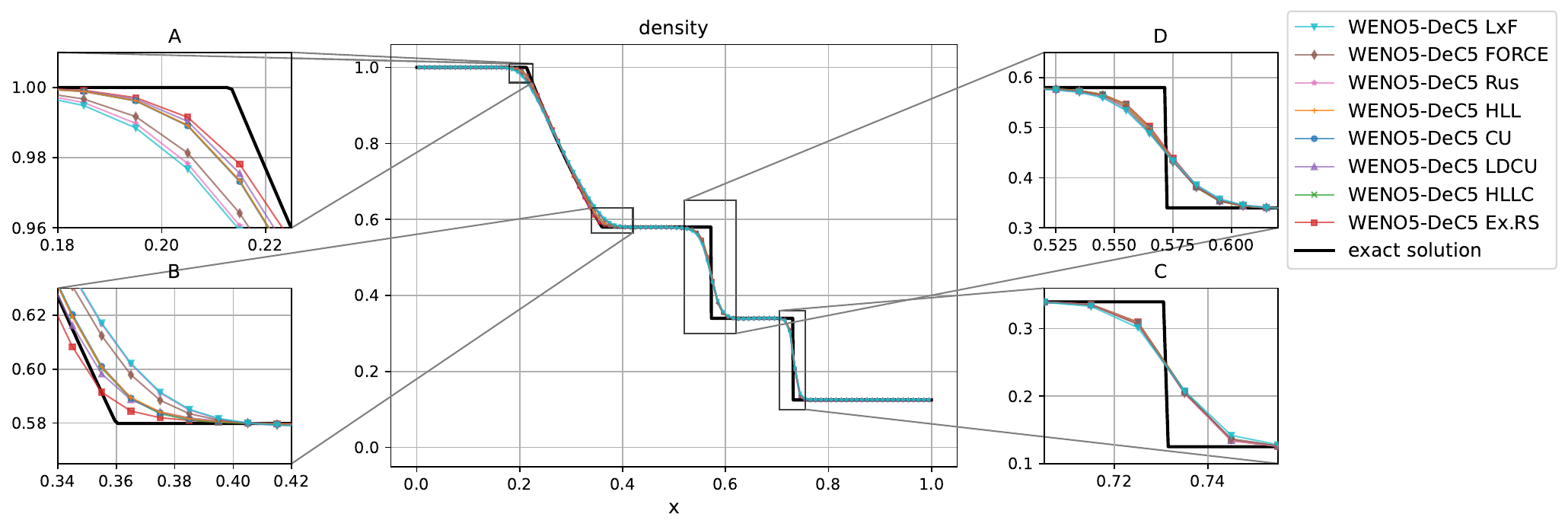}
		\caption{Order 5}
	\end{subfigure}
	\\
	\begin{subfigure}[b]{1.\textwidth}
		\centering
		\includegraphics[width=\textwidth]{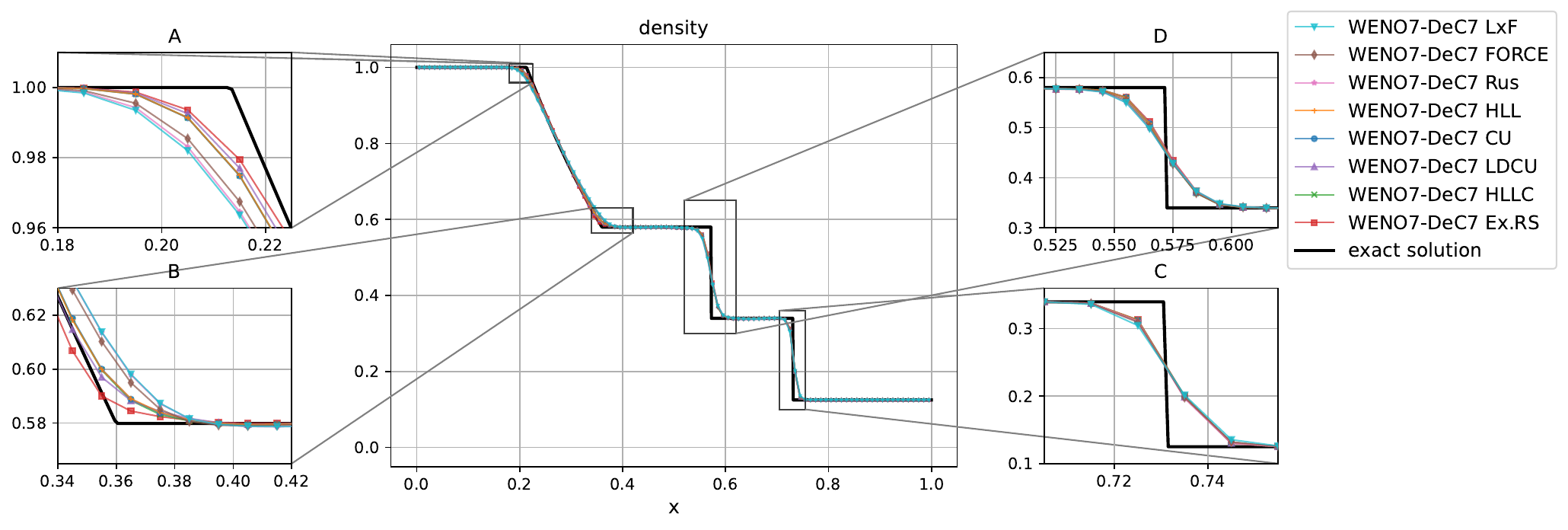}
		\caption{Order 7}
	\end{subfigure}
	\caption{One--dimensional Euler equations, Riemann problem 1: Density profile obtained with all numerical fluxes for different orders with $C_{CFL}:=0.95$}
	\label{fig:Euler_1d_RP1_zoom_density}
\end{figure}

\subsubsection{Riemann problem 5}\label{sec:Euler_1d_RP5}
%

This Riemann problem results from choosing the left and middle states of the well--known Woodward--Colella problem~\cite{woodward1984numerical}, which involves three states and does not have an exact solution. 
Instead, the resulting Riemann problem has exact solution, and it constitutes a very severe test, with initial pressure jump of $10^{5}$ in the order of magnitude and an emerging shock wave of Mach number 198.
Also in this case, we have considered $100$ cells for the numerical experiments.
%
%
The behavior of the methods on this test was quite irregular.
\begin{itemize}
	\item Concerning LxF, 
	with order 3 there are no simulation crashes for $C_{CFL}\leq 0.35$; 
	with order 5 for $C_{CFL}\leq 0.3$;
	with order 7 for $C_{CFL}\leq 0.25$.

	\item Concerning FORCE, 
	no simulation crashes occur for $C_{CFL}=0.95$ with any order.
	
	\item Concerning Rus, 
	with order 3 there are no simulation crashes for $C_{CFL}\leq 0.95$; 
	with order 5 for $C_{CFL}\leq 0.85$;
	with order 7 for $C_{CFL}\leq 0.75$.
	
	\item Concerning HLL, 
	with order 3 there are no simulation crashes for $C_{CFL}\leq 0.15$; 
	with order 5 for $C_{CFL}\leq 0.2$;
	with order 7 for $C_{CFL}\leq 0.25$.
	
	\item Concerning CU, 
	with order 3 there are no simulation crashes for $C_{CFL}\leq 0.4$; 
	with order 5 for $C_{CFL}\leq 0.6$;
	with order 7 for $C_{CFL}\leq 0.55$.

	\item Concerning LDCU, 
	with order 3 there are no simulation crashes for $C_{CFL}\leq 0.45$; 
	with order 5 for $C_{CFL}\leq 0.5$;
	with order 7 for $C_{CFL}\leq 0.8$.

	\item Concerning HLLC, 
	with order 3 there are no simulation crashes for $C_{CFL}\leq 0.1$; 
	with order 5 and 5 for $C_{CFL}\leq 0.15$.

	\item Concerning Ex.RS, 
	with order 3 there are no simulation crashes for $C_{CFL}\leq 0.15$; 
	with order 5 for $C_{CFL}\leq 0.2$;
	with order 7 for $C_{CFL}\leq 0.3$.

\end{itemize}
It is interesting to notice how, in this test, adopting higher order discretizations allows to use higher values of $C_{CFL}$ for HLL, CU, LDCU, HLLC, and Ex.RS.

Let us comment the density results obtained for $C_{CFL}:=0.1$, for which all settings are available.
They are reported in Figure~\ref{fig:Euler_1d_RP5_zoom_density}.

In this test, we can appreciate huge differences depending on the choice of the numerical flux.
The main ones concern the description of the peak of the density in panel D.
The complete upwind numerical fluxes, HLLC and Ex.RS, provide a nice and detailed capturing of such a solution feature even for order 3.
The other numerical fluxes perform much worse.
Interesting differences can be appreciated also in panel E with similar observations: the complete upwind numerical fluxes outperform the other fluxes by far.
In general, the centred fluxes (LxF and FORCE) achieve the worst performance; the complete upwind ones (HLLC and Ex.RS) have the best one; the other numerical fluxes, i.e., the incomplete upwind numerical fluxes, give similar results amongst themselves.

Let us notice that increasing the order of the discretization determines strong improvements for all numerical fluxes but for HLLC and Ex.RS, for which the quality of the results is very high already for order 3.
Differences among all numerical fluxes become smaller but still rather evident passing from order 3 to order 7.
As a matter of fact, the choice of the ``correct'' numerical flux, and in particular the adoption of a complete upwind flux, for this test, is equivalent to the adoption of a higher order discretization. 
Increasing the order of accuracy, within the explored range, helps but it is not sufficient to nullify the differences between the different numerical fluxes.
As a further evidence for this fact, we report in Figure~\ref{fig:Euler_1d_RP5_zoom_density_o1} the results obtained with all numerical fluxes for a standard first order FV discretization involving a piecewise-constant (PWC) reconstruction in space and Euler in time. We consider $100$ cells and $C_{CFL}:=0.95$.
As one can clearly observe, for this test, the quality of the results obtained through the complete upwind fluxes, HLLC and Ex.RS, is comparable with the one of the results obtained for much higher order reported in Figure~\ref{fig:Euler_1d_RP5_zoom_density}.
The results obtained for LxF are in line with what reported in~\cite{toro2024computational}.
Let us notice that the first order setting is not subjected to the tedious time step restrictions, which characterize higher order discretizations, and all simulations could be run with $C_{CFL}:=0.95$ without crashes.

\begin{figure}[htbp]
	\centering
	\begin{subfigure}[b]{0.6\textwidth}
		\centering
		\includegraphics[width=\textwidth]{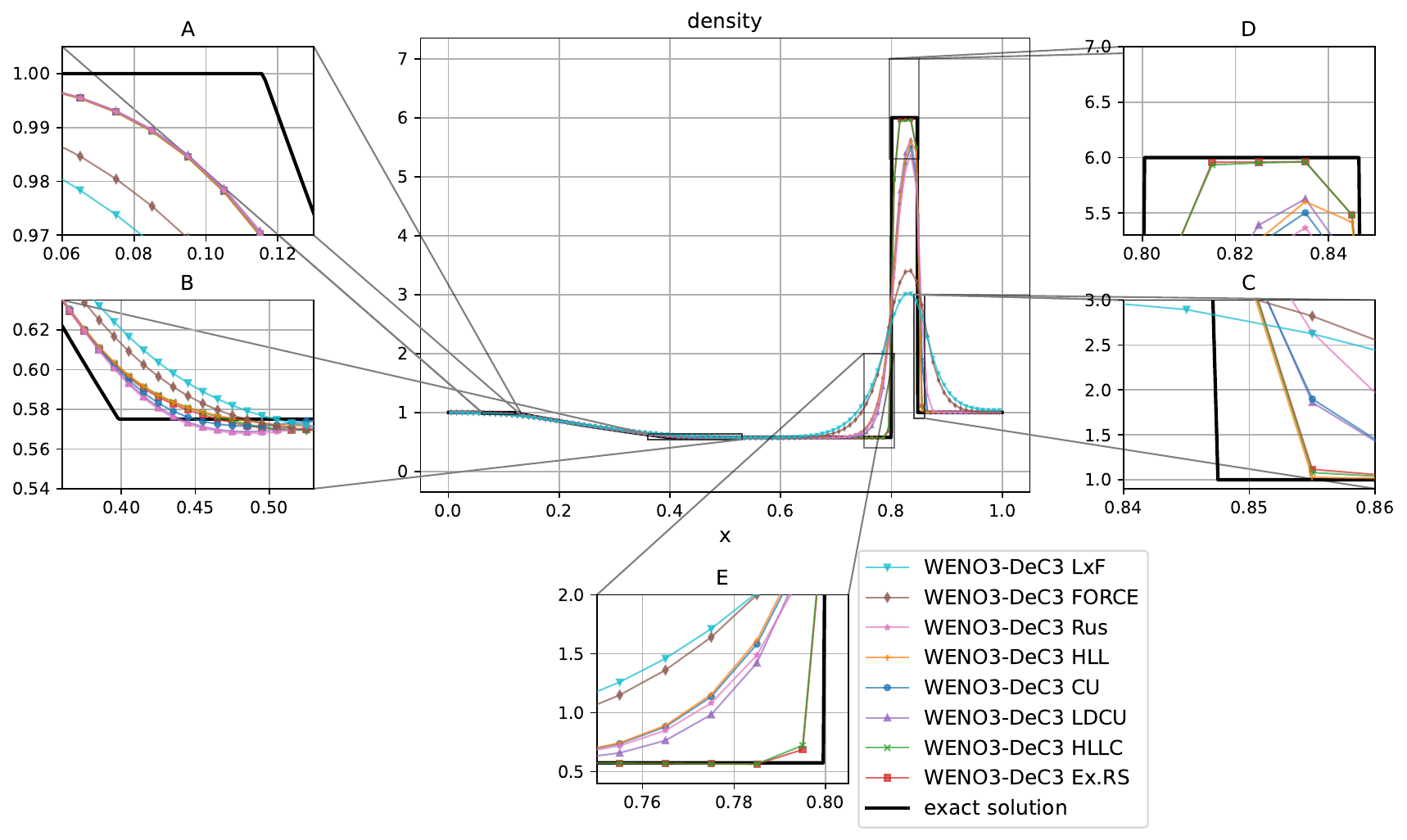}
		\caption{Order 3}
	\end{subfigure}\\
	\begin{subfigure}[b]{0.6\textwidth}
		\centering
		\includegraphics[width=\textwidth]{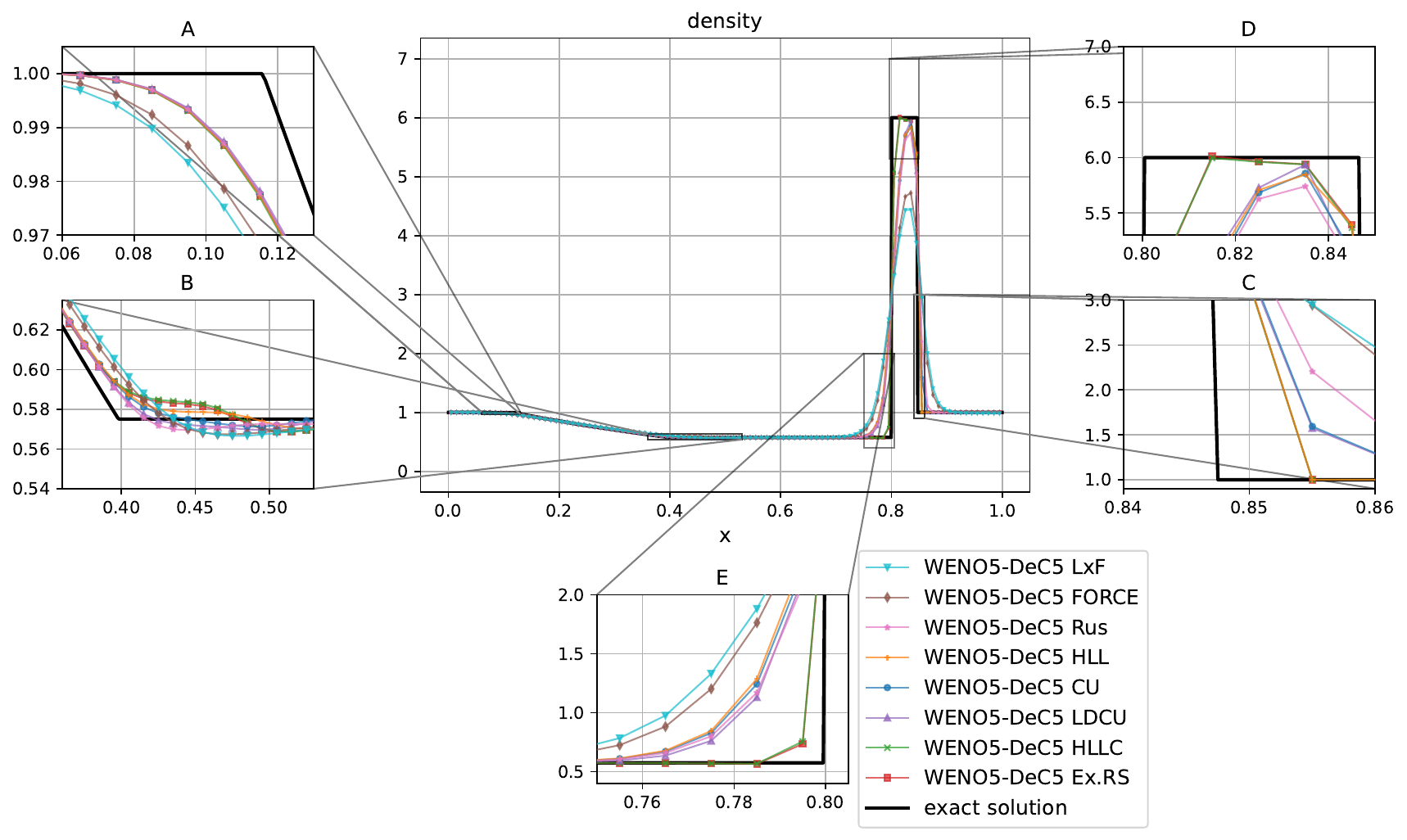}
		\caption{Order 5}
	\end{subfigure}
	\\
	\begin{subfigure}[b]{0.6\textwidth}
		\centering
		\includegraphics[width=\textwidth]{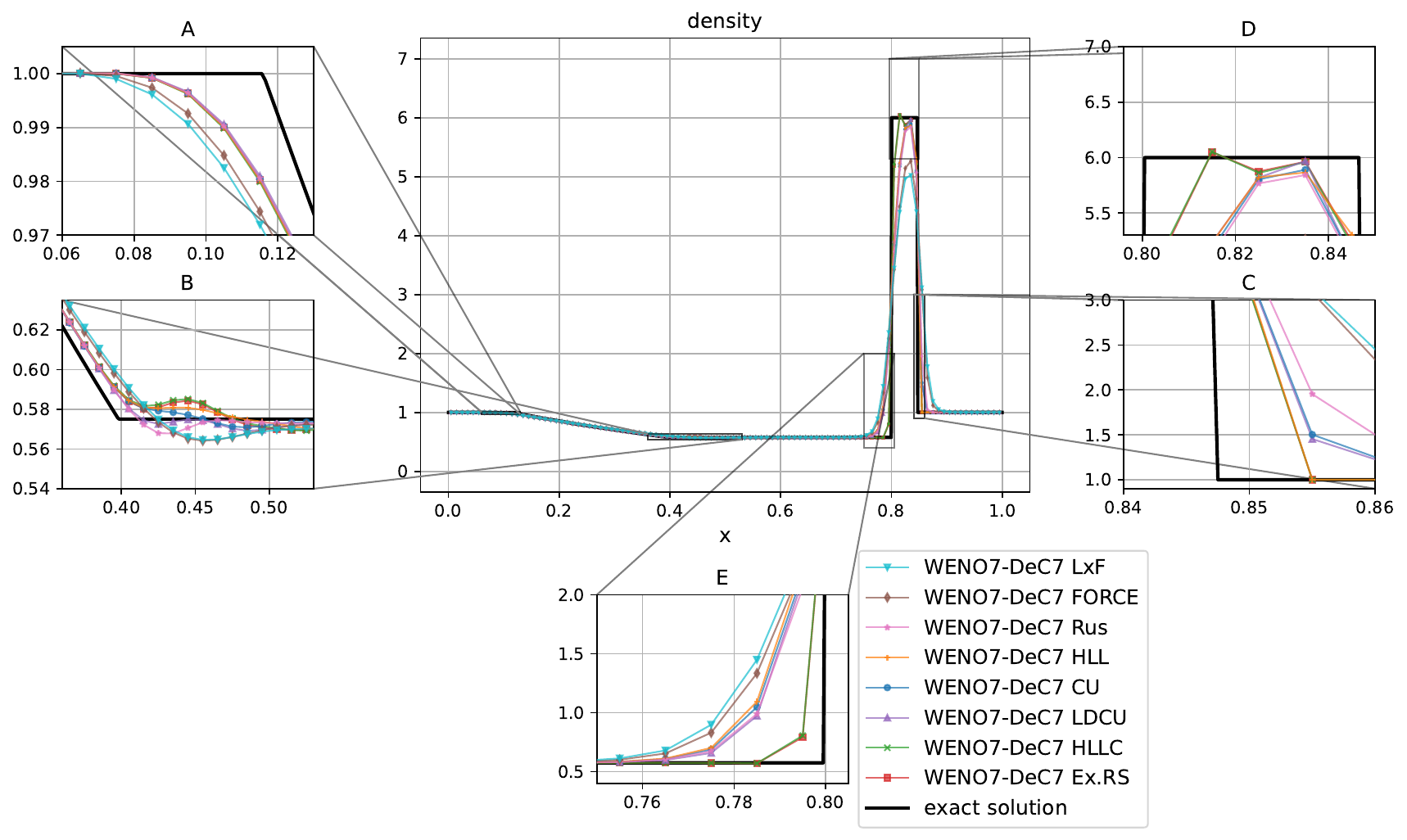}
		\caption{Order 7}
	\end{subfigure}
	\caption{One--dimensional Euler equations, Riemann problem 5: Density profile obtained with all numerical fluxes for different orders with $C_{CFL}:=0.1$}
	\label{fig:Euler_1d_RP5_zoom_density}
\end{figure}

\begin{figure}[ht]
	\centering
	\includegraphics[width=\textwidth]{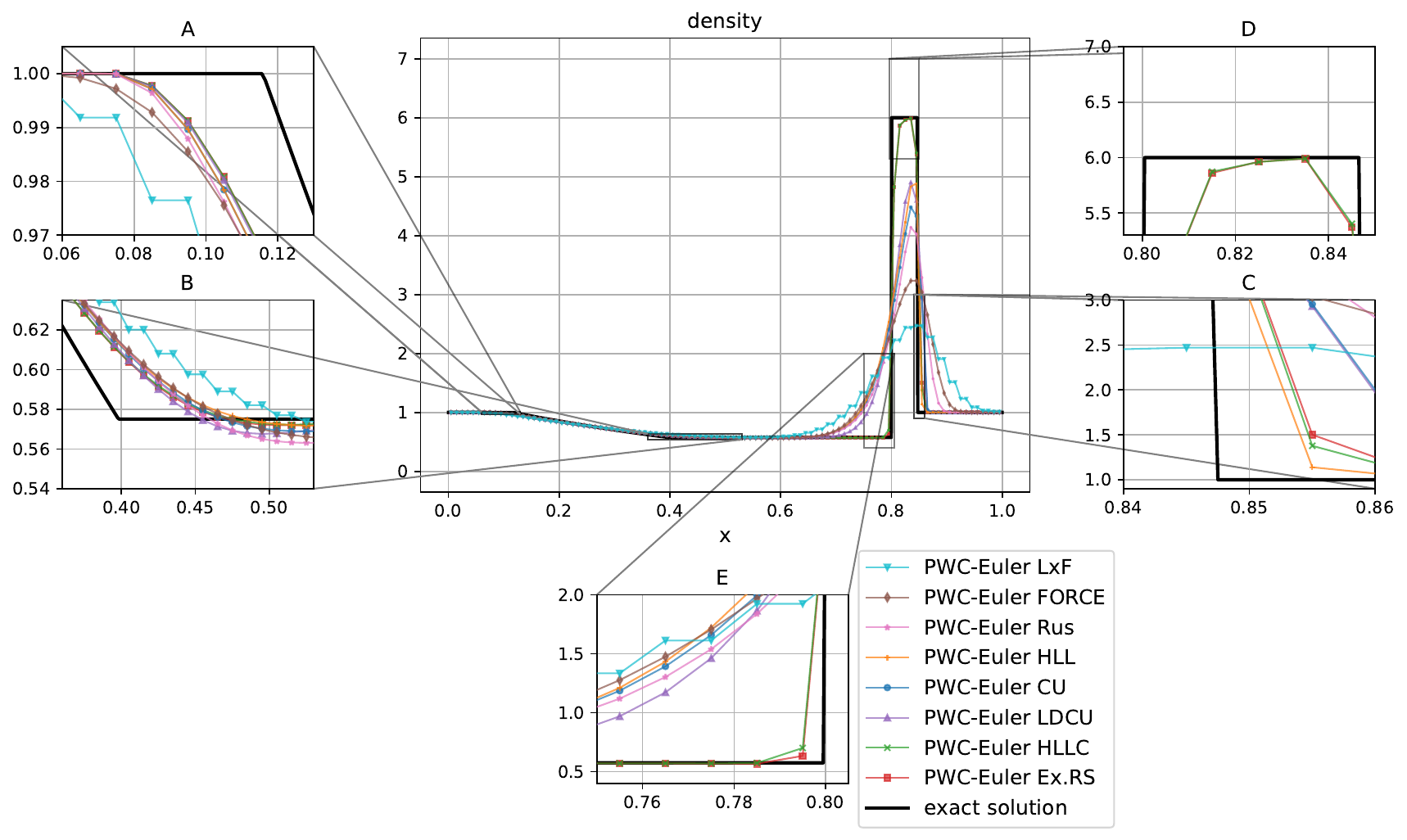} 
	\caption{One--dimensional Euler equations, Riemann problem 5: Density profile obtained with all numerical fluxes for a first order FV setting with $C_{CFL}:=0.95$}
	\label{fig:Euler_1d_RP5_zoom_density_o1}
\end{figure}

\subsubsection{Riemann problem 6}\label{sec:Euler_1d_RP6}
This test consists in a stationary contact discontinuity.
This is a typical problem in which complete upwind fluxes are expected to outperform all the other numerical fluxes.
In fact, the main motivation for the introduction of HLLC lied precisely in the bad performance of the HLL numerical flux (and of other incomplete upwind or centred numerical fluxes) in capturing contact surfaces,
shear waves and material interfaces.

We ran all simulations with $C_{CFL}:=0.95$ and $100$ elements, and the density results are reported, for final times $T_f=2.0$ and $T_f=5000$, in Figures~\ref{fig:Euler_1d_stationary_contact_density_T2} and~\ref{fig:Euler_1d_stationary_contact_density_T5000} respectively.
Looking at Figure~\ref{fig:Euler_1d_stationary_contact_density_T2}, we can see how the exact solution is preserved by the complete upwind fluxes, HLLC and Ex.RS. All other numerical fluxes provide a similar smearing of the contact discontinuity.
This is much more evident for longer final time, see in Figure~\ref{fig:Euler_1d_stationary_contact_density_T5000}.
The stationary solution is still exactly preserved by HLLC and Ex.RS. 
All other numerical fluxes yield unacceptable results, even though the accuracy improves somehow with higher order,  but still not getting comparable with the sharpness realized by the complete upwind fluxes HLLC and Ex.RS. 
This test problem belongs to a class of flow situations in which the use of complete upwind fluxes is essential and in which the deficiencies of centred and incomplete upwind fluxes cannot be compensated by increasing the order of accuracy of the approach.

\begin{figure}[htbp]
	\centering
	\begin{subfigure}[b]{0.6\textwidth}
		\centering
		\includegraphics[width=\textwidth]{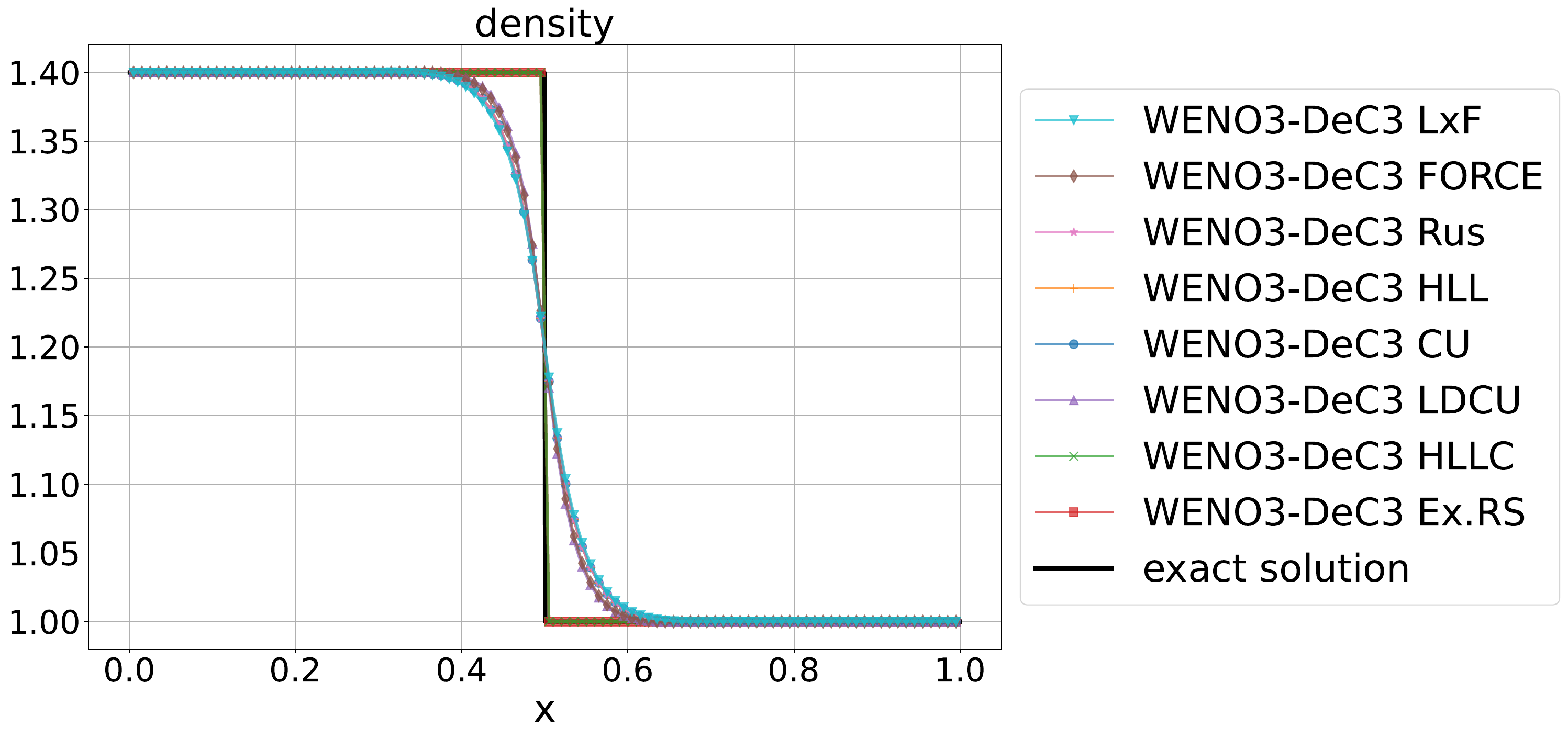}
		\caption{Order 3}
	\end{subfigure}\\
	\begin{subfigure}[b]{0.6\textwidth}
		\centering
		\includegraphics[width=\textwidth]{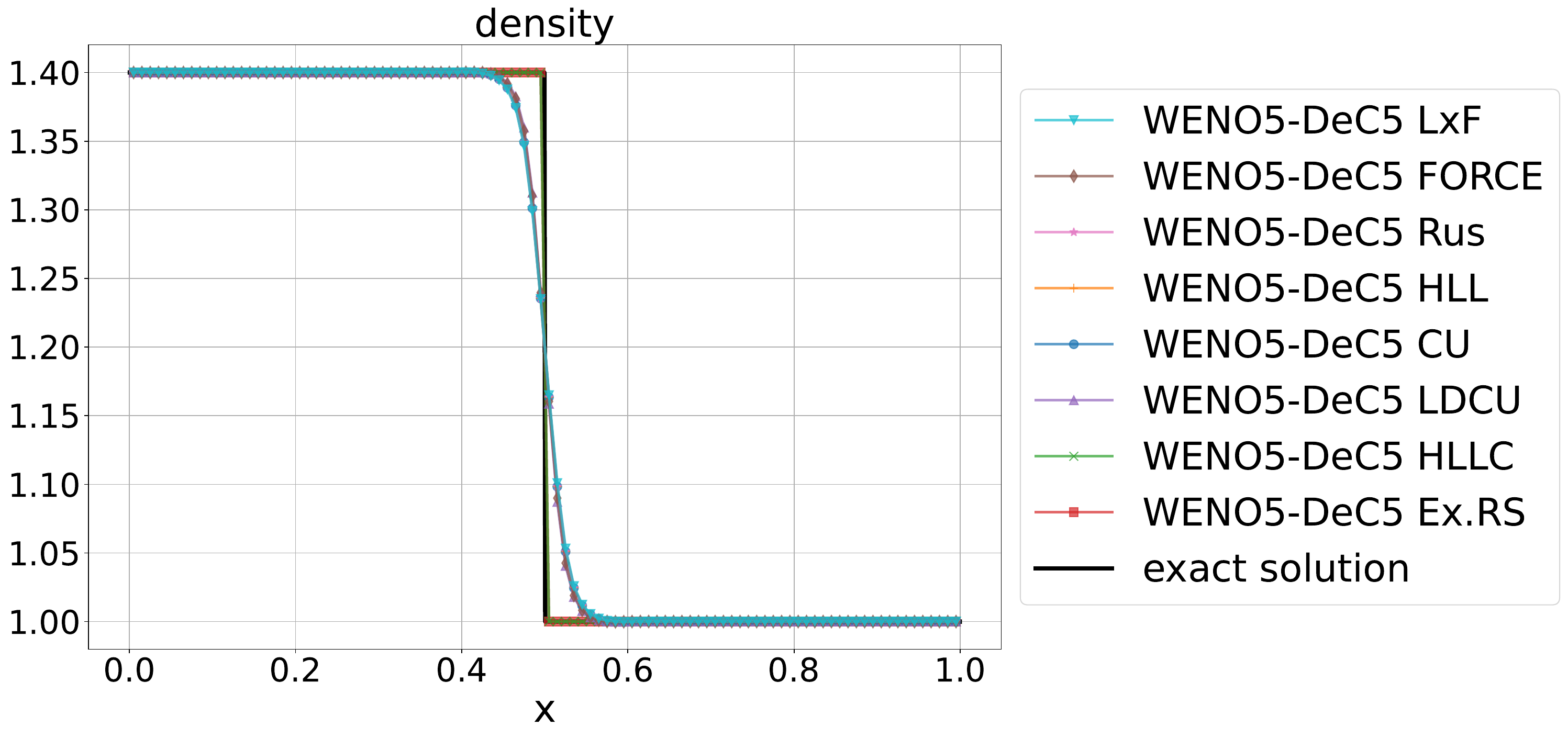}
		\caption{Order 5}
	\end{subfigure}
	\\
	\begin{subfigure}[b]{0.6\textwidth}
		\centering
		\includegraphics[width=\textwidth]{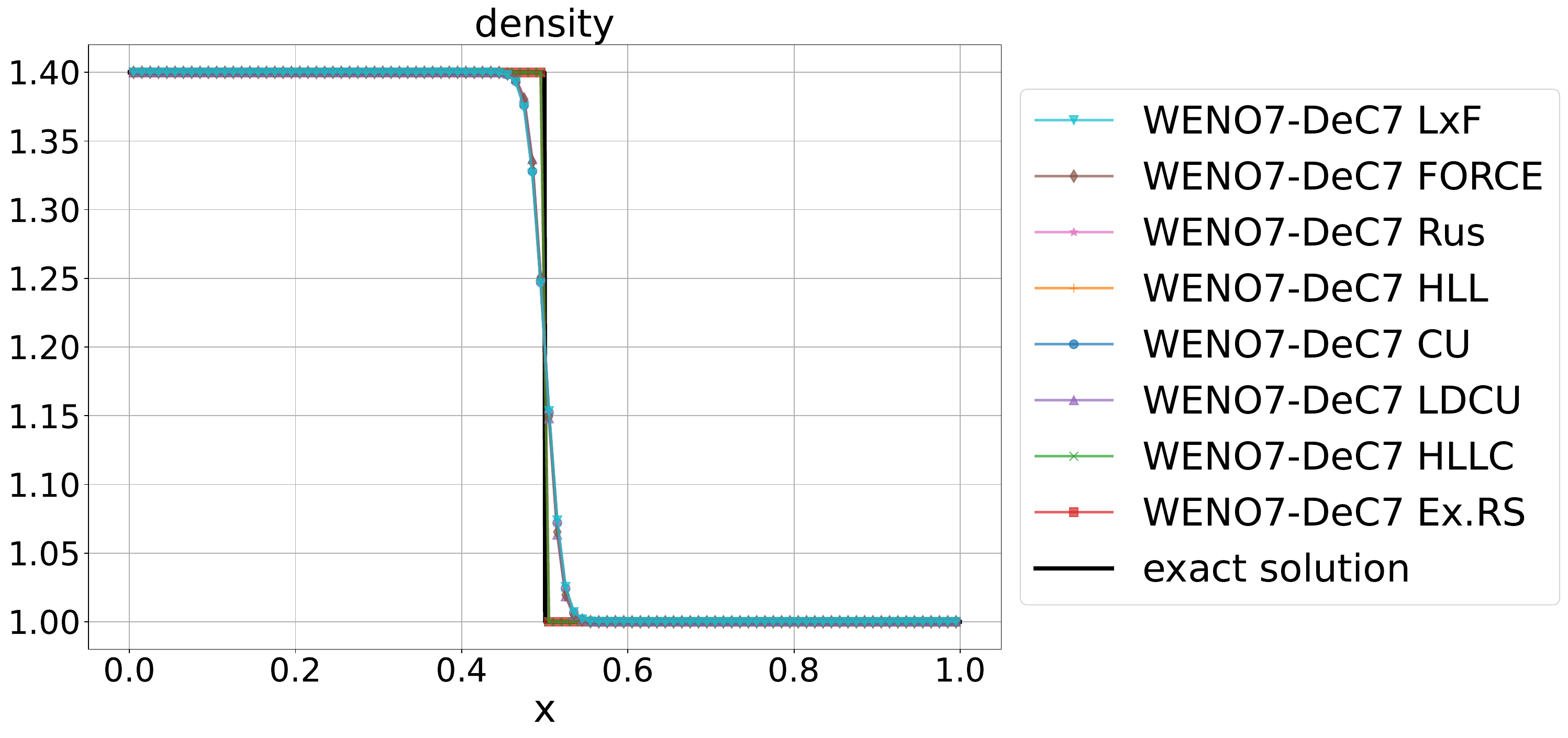}
		\caption{Order 7}
	\end{subfigure}
	\caption{One--dimensional Euler equations, Riemann problem 6: Density profile obtained with all numerical fluxes for different orders with $C_{CFL}:=0.95$ at final time $T_f=2$}
	\label{fig:Euler_1d_stationary_contact_density_T2}
\end{figure}

\begin{figure}[htbp]
	\centering
	\begin{subfigure}[b]{0.6\textwidth}
		\centering
		\includegraphics[width=\textwidth]{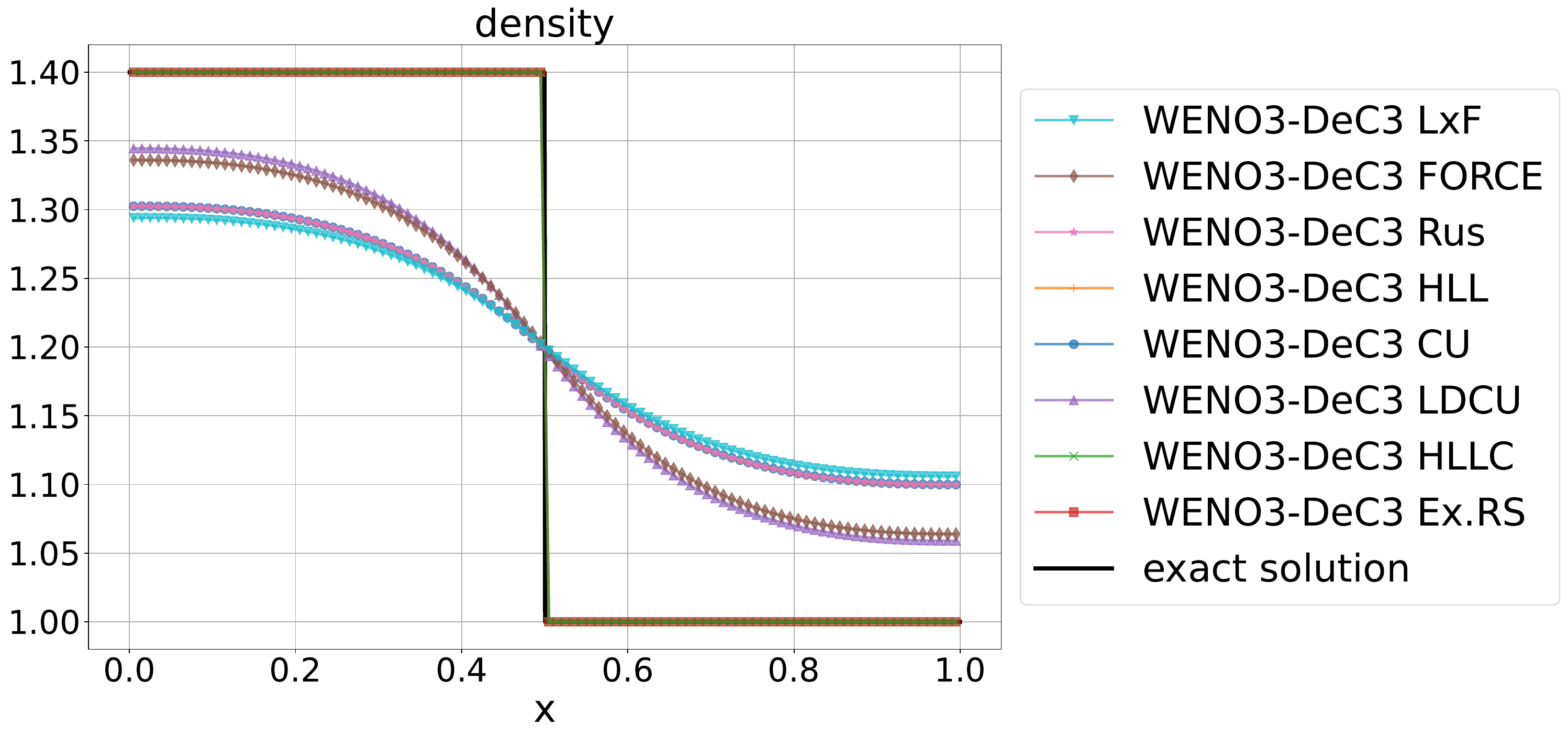}
		\caption{Order 3}
	\end{subfigure}\\
	\begin{subfigure}[b]{0.6\textwidth}
		\centering
		\includegraphics[width=\textwidth]{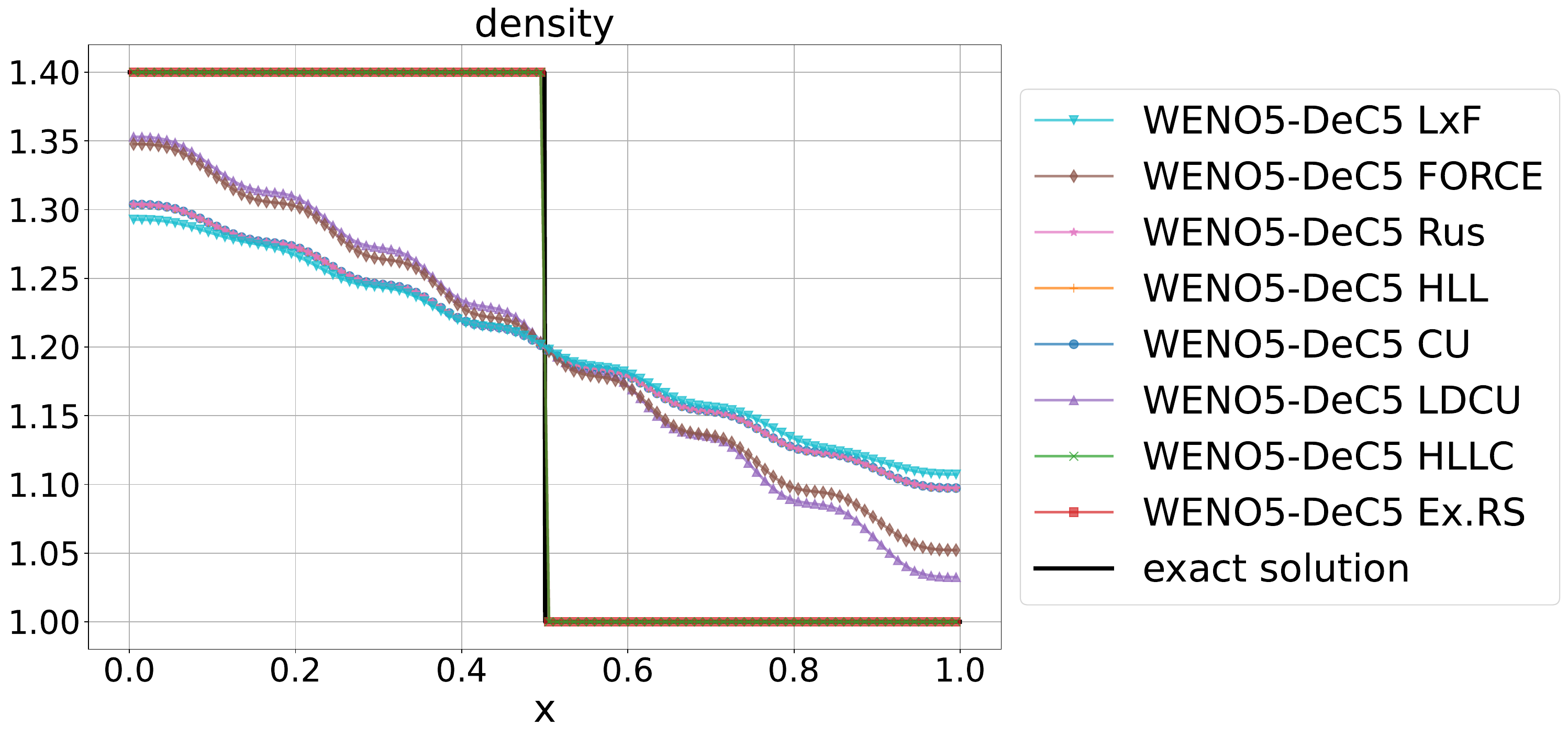}
		\caption{Order 5}
	\end{subfigure}
	\\
	\begin{subfigure}[b]{0.6\textwidth}
		\centering
		\includegraphics[width=\textwidth]{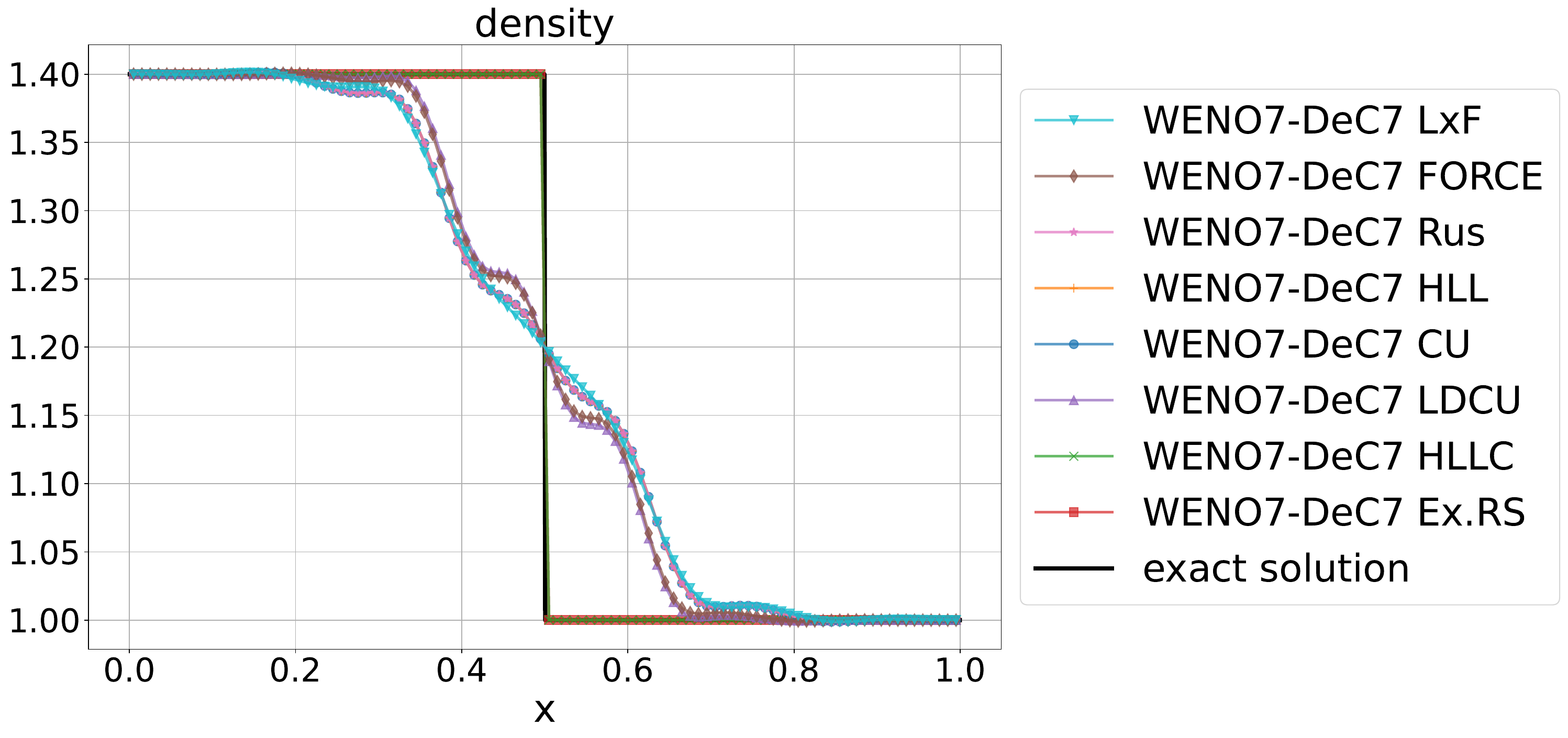}
		\caption{Order 7}
	\end{subfigure}
	\caption{One--dimensional Euler equations, Riemann problem 6: Density profile obtained with all numerical fluxes for different orders with $C_{CFL}:=0.95$ at final time $T_f=5000$}
	\label{fig:Euler_1d_stationary_contact_density_T5000}
\end{figure}

\subsubsection{Riemann problem 7}\label{sec:Euler_1d_RP7}
This test consists in a slight modification of the previous one. 
Here, the two states of the Riemann problem have velocity $0.1$. 
By Galilean invariance, this determines the stationary solution of the previous test to move towards right with the same speed.
Being the contact wave not stationary anymore, the methods obtained through complete upwind fluxes will not capture the exact solution, unlike in the previous case. Still, differences among the numerical fluxes are to be expected and are confirmed by our numerical results.

Again, we considered $C_{CFL}:=0.95$ and $100$ elements.
The density results are displayed in Figure~\ref{fig:Euler_1d_moving_contact_density}.
As expected, due to the nonzero velocity, the complete upwind fluxes, HLLC and Ex.RS, no longer exactly capture the analytical solution.
Nonetheless, we see that they still outperform all other numerical fluxes for orders 3 and 5.
Let us notice that, for order 7, the results obtained through all numerical fluxes are analogous.
As already remarked, the adoption of a higher order discretization may make up for the choice of wrong numerical fluxes. 

\begin{figure}[htbp]
	\centering
	\begin{subfigure}[b]{0.6\textwidth}
		\centering
		\includegraphics[width=\textwidth]{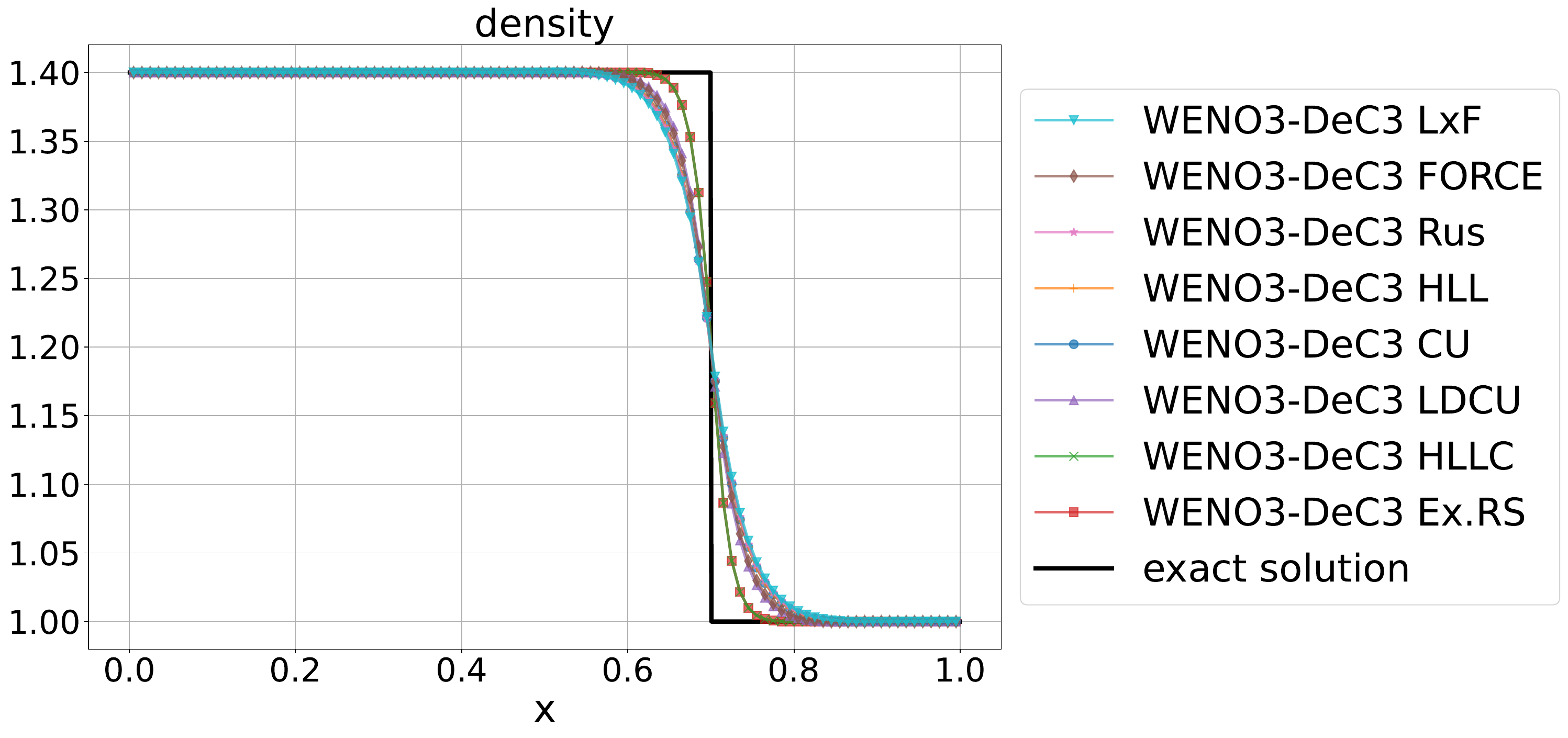}
		\caption{Order 3}
	\end{subfigure}\\
	\begin{subfigure}[b]{0.6\textwidth}
		\centering
		\includegraphics[width=\textwidth]{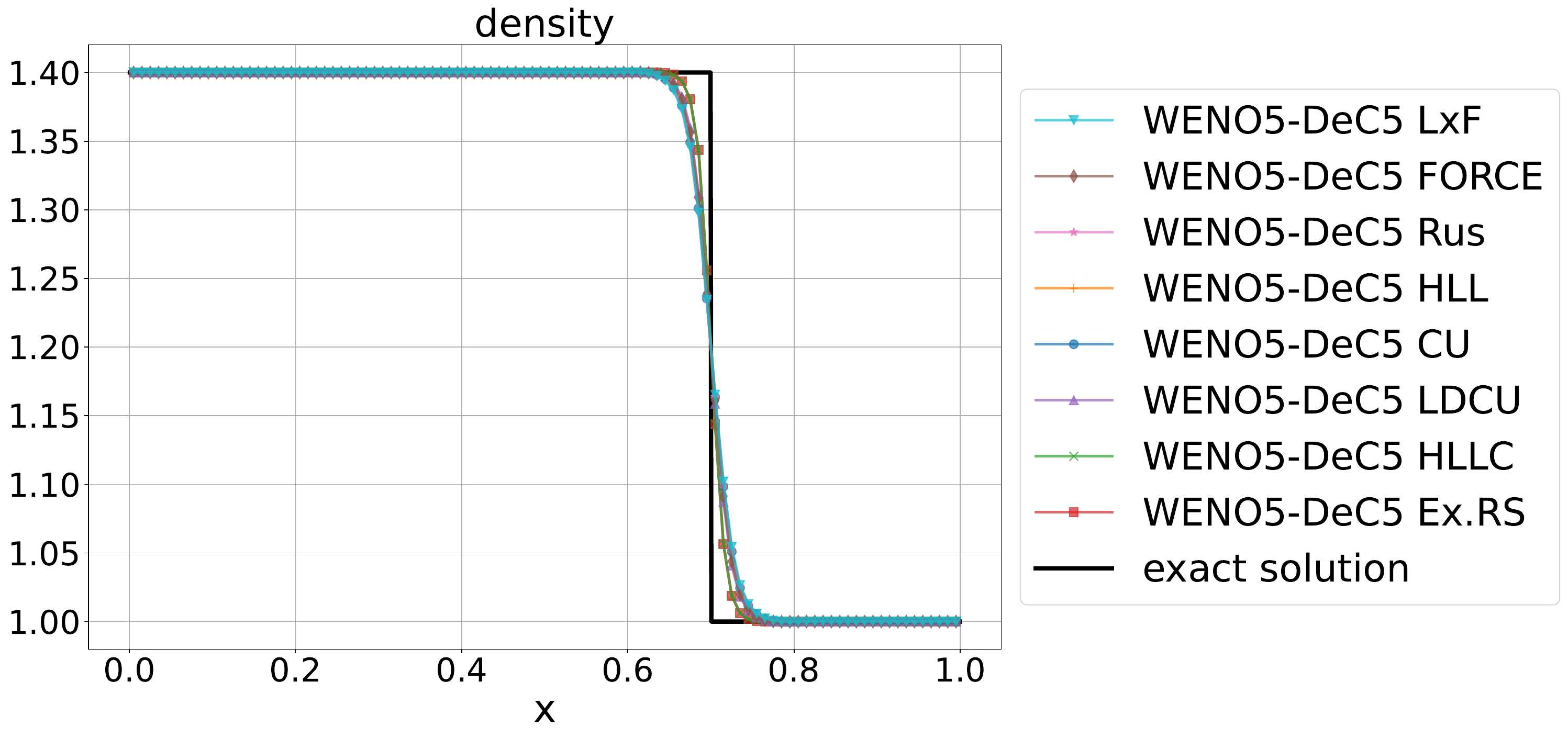}
		\caption{Order 5}
	\end{subfigure}
	\\
	\begin{subfigure}[b]{0.6\textwidth}
		\centering
		\includegraphics[width=\textwidth]{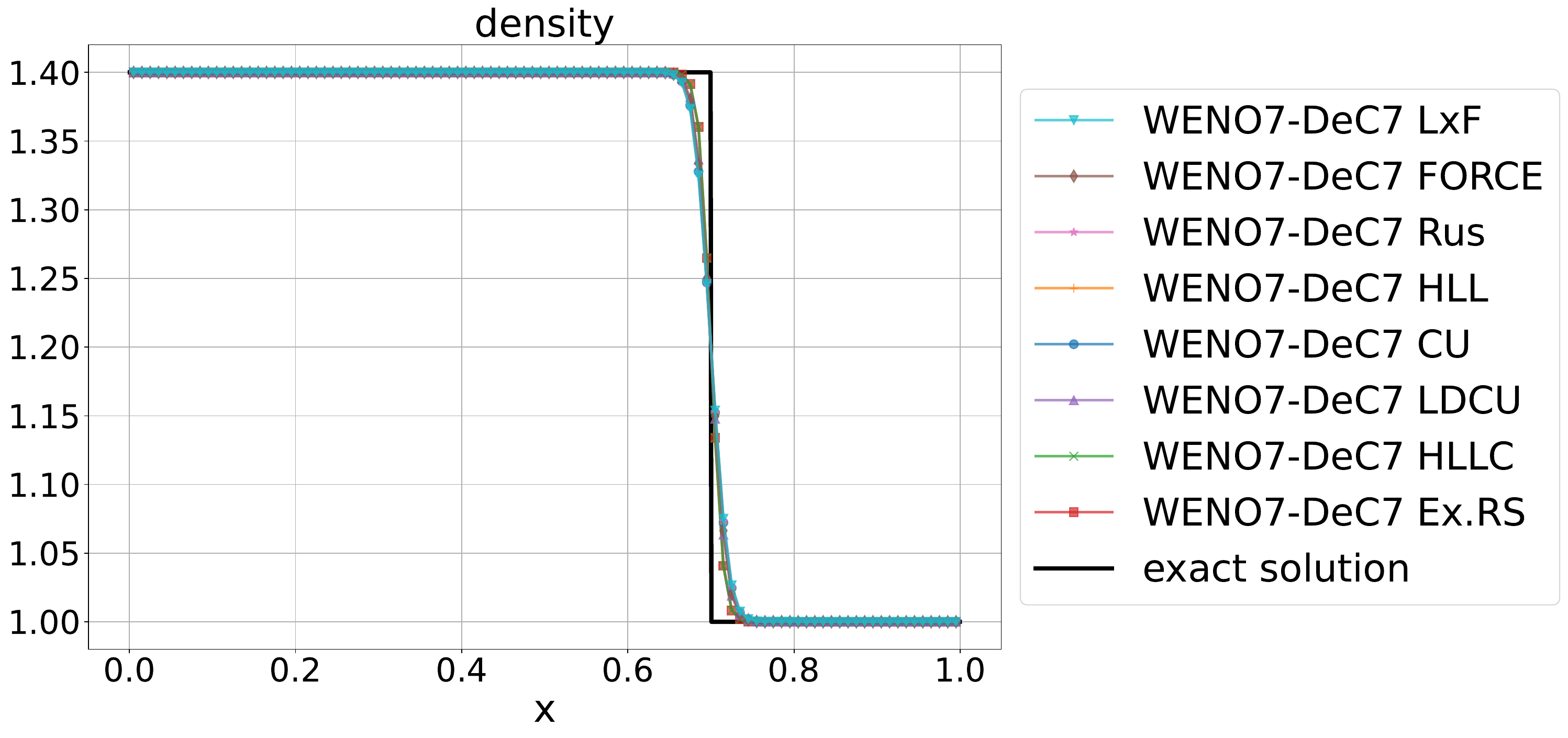}
		\caption{Order 7}
	\end{subfigure}
	\caption{One--dimensional Euler equations, Riemann problem 7: Density profile obtained with all numerical fluxes for different orders with $C_{CFL}:=0.95$}
	\label{fig:Euler_1d_moving_contact_density}
\end{figure}

\subsubsection{Modified shock--turbulence interaction}\label{sec:Euler_1d_titarev_toro}
This last one--dimensional test, presented in~\cite{titarev2004finite}, consists in a challenging modification of the famous shock--turbolence interaction problem introduced in~\cite{shu1989efficient} by Shu and Osher, and it consists in a shock interacting with a turbulent flow characterized by high frequency oscillations.
On the computational domain $\Omega:=[-5,5]$, the initial conditions read 
\begin{align}
	\begin{pmatrix}
		\rho\\
		u\\
		p
	\end{pmatrix}(x,0):=\begin{cases}
		(1.515695,0.523346,1.80500)^T, \quad  &\text{if}~x< -4.5,\\
		(1.0+0.1\sin{(20 \pi x)},0,1)^T, \quad &\text{otherwise}.
	\end{cases} 
	\label{eq:Euler_1d_titarev_toro_IC}
\end{align}
Inflow boundary condition is assumed at the left boundary, while, transmissive boundary condition is prescribed at the right one. The final time is $T_f:=5.$
The resulting solution presents several smooth structures to be captured, as well as a shock.
Problems of this type are well--known to be challenging from a numerical point of view, as schemes should be able at the same time to provide a sharp capturing of the solution features in smooth areas while sharply resolving discontinuities.
More in detail, the challenges posed by this modification are the same as the ones of the original test but they are made much more extreme by the fact that, in this case, the density profile in the turbulent part of the flow has a frequency which is more than 12 times higher than the one of the original problem, determining the formation of many more structures in the solution.
We ran the test with 1000 cells and $C_{CFL}:=0.95$ without experiencing simulation crashes.
The density results for all numerical fluxes and orders are reported in Figure~\ref{fig:Euler_1d_titarev_toro_zoom_density}.

We can observe differences among the numerical fluxes, see panels A and B.
The complete upwind numerical fluxes, HLLC and Ex.RS, give the best results for all orders. In particular, even for order 3, the results obtained through these two numerical fluxes are extremely good, especially in panel A.
Results obtained through all other numerical fluxes are simply unacceptable for order 3. Their quality improves as the order of accuracy increases without ever reaching the one of HLLC and Ex.RS.
Also in this test, we have that the adoption of a good (namely, complete upwind) numerical flux is equivalent (actually better for the considered orders) to the adoption of a much higher order discretization.

Let us observe that, for the considered mesh refinement, even for order 7, even for complete upwind fluxes, the results are actually far from the reference solution in most of the turbulent region of the flow.
We found that in this test case increasing the order of accuracy plays a crucial role for obtaining a good approximation of the solution in the whole domain, however, deeper investigations in this direction are left for future works.

\begin{figure}[htbp]
	\centering
	\begin{subfigure}[b]{0.9\textwidth}
		\centering
		\includegraphics[width=\textwidth]{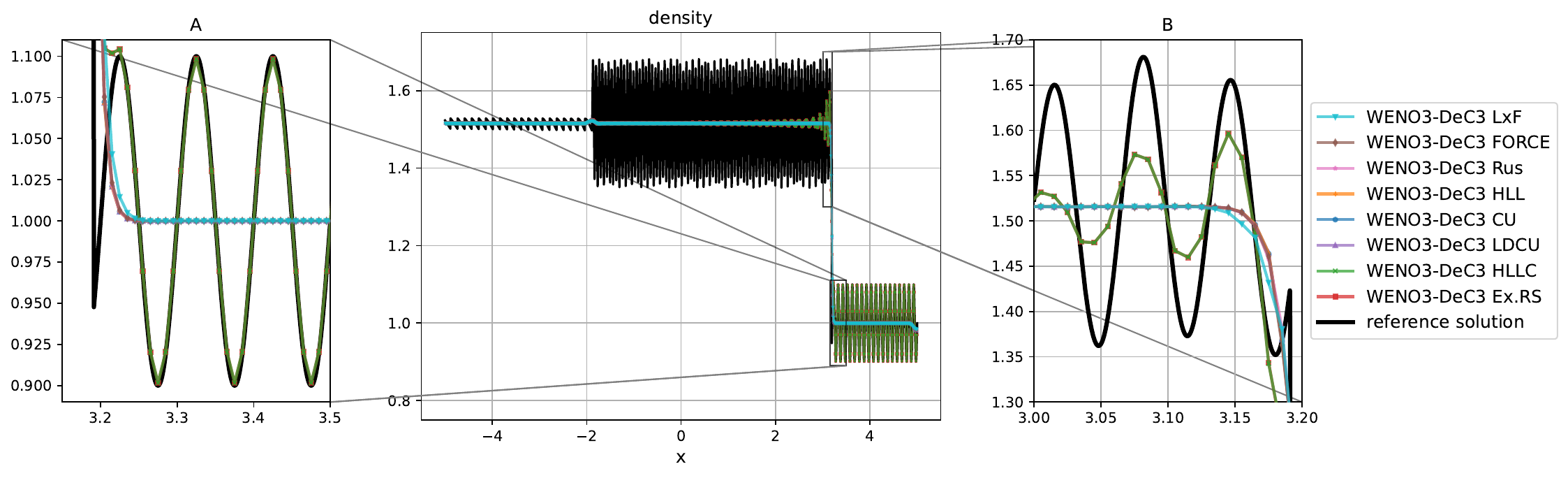}
		\caption{Order 3}
	\end{subfigure}
	\\
	\begin{subfigure}[b]{0.9\textwidth}
		\centering
		\includegraphics[width=\textwidth]{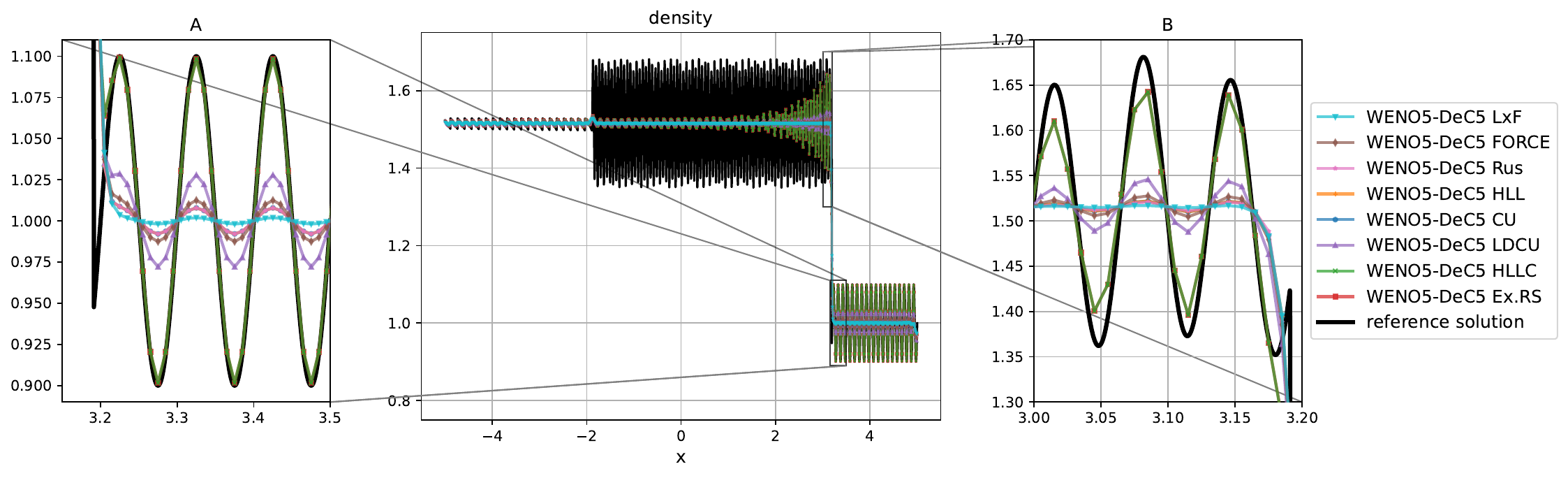}
		\caption{Order 5}
	\end{subfigure}
	\\
	\begin{subfigure}[b]{0.9\textwidth}
		\centering
		\includegraphics[width=\textwidth]{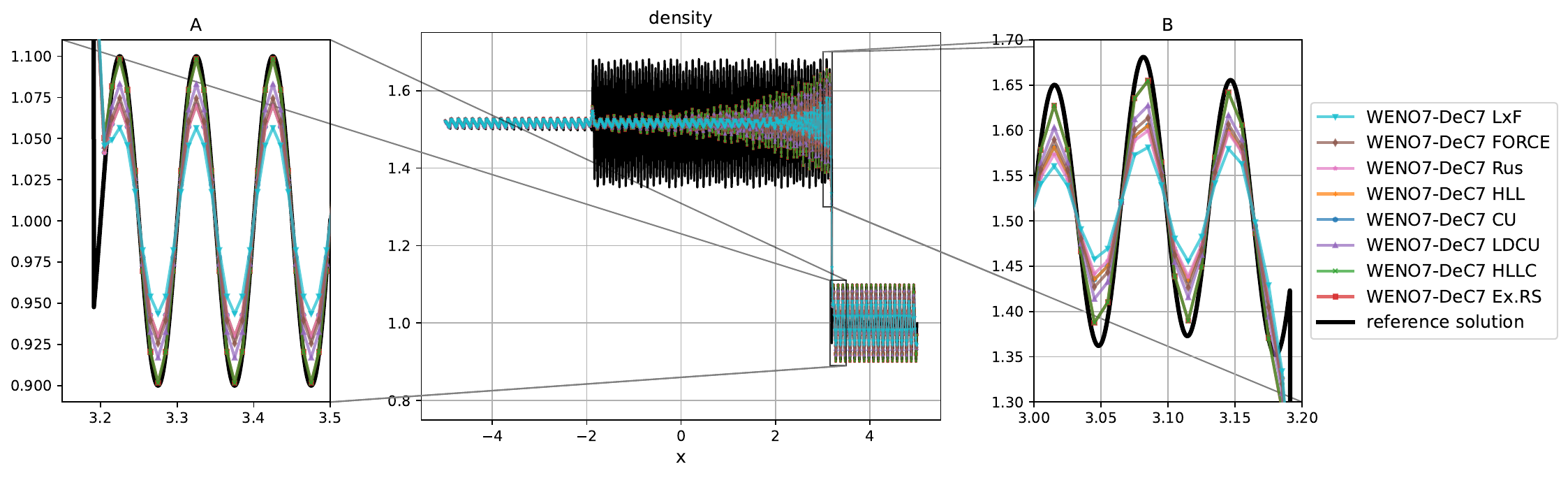}
		\caption{Order 7}
	\end{subfigure}
	\caption{One--dimensional Euler equations, Modified shock--turbulence interaction: Density profile obtained with all numerical fluxes for different orders with $C_{CFL}:=0.95$}
	\label{fig:Euler_1d_titarev_toro_zoom_density}
\end{figure}

\subsubsection{Comments on omitted results}\label{sec:Euler_1d_omitted_results}
Here, we provide some further comments on results which have been omitted for the sake of compactness.
If it is true that the choice of the correct numerical flux is essential in the context of the simulations that we have previously presented, it is also true that this is not the case for other test cases that we have considered in our investigation.
In particular, on Riemann problems 2, 3 and 4 from~\cite[Section 10.8]{ToroBook}, on the original shock--turbulence interaction problem from~\cite{shu1989efficient}, on Lax problem~\cite{liu2015finite,xuan4663427filtered}, and on the long--time (with $T_f:=2000$) advection of a density profile shaped as the composite wave from~\cite{jiang1996efficient}, we found that the choice of the numerical flux is less crucial.
Actually, the real difference in these tests is passing from centred fluxes (LxF and FORCE) or Rus, to any of the other reported numerical fluxes.
Generally speaking, the results obtained using LxF, FORCE, or Rus are consistently worse than those produced by the other numerical fluxes, whose results are instead very similar.
More specifically, LxF always yields the worst results, while the preference between Rus and FORCE depends on the specific test.

We also observed that, increasing the order of the discretization minimizes the differences among the results obtained through different numerical fluxes. For order 7, the performance of the investigated centred fluxes and Rus on the mentioned tests is comparable to the one of the other numerical fluxes.
This is why we plan future investigations in this direction, pushing the order of accuracy to values higher than 7.

Last but not least, we performed all the simulations also applying the WENO reconstruction to conserved variables to check wether the choice of the correct numerical flux could prevent from the oscillations, which are typically associated to such a choice as described in~\cite{qiu2002construction,miyoshi2020short,peng2019adaptive,ghosh2012compact}. We found out that this is not the case. In presence of discontinuities, when reconstructing conserved variables, simulation crashes are more likely to happen. Moreover, even when simulations crashes do not occur, spurious oscillations with amplitude proportional to the considered order arise.
The higher viscosity of more diffusive numerical fluxes, e.g., Rus, determines in some tests a little smearing of such oscillations.
Overall, according to our results, applying the reconstruction to conserved variables does not appear to be a good practice when discontinuities are expected for any choice of the numerical flux.

\subsection{Two--dimensional Euler equations}\label{sec:Euler_2d}
In this section, we deal with numerical simulations involving the two--dimensional Euler equations.
In Section~\ref{sec:Euler_2d_unsteady_vortex}, we test the accuracy of the methods and the performance of the different numerical fluxes on a smooth unsteady isentropic vortex;
in Section~\ref{sec:Euler_2d_unsteady_vortex_longer_time}, we asses the ability of the different numerical fluxes to deal with long--time evolution problems on the same vortex;
in Section~\ref{sec:Euler_2d_explosion_problem}, we compare the numerical fluxes and their shock--capturing properties on an explosion problem~\cite{titarev2004finite}.
The reference solution in the last test has been computed through a FV method with second order accurate van Leer's minmod spatial discretization~\cite{AbgrallMishranotes}, SSPRK2 time discretization, reconstruction of characteristic variables and exact Riemann solver as numerical flux, on a very refined mesh of $3000\times3000$ cells, adopting
$C_{CFL}:=0.25$.

\subsubsection{Smooth isentropic unsteady vortex}\label{sec:Euler_2d_unsteady_vortex}
This test is classically used in many references~\cite{shu1998essentially,micalizzi2023efficient,lore_phd_thesis,boscheri2022continuous,boscheri2015direct,Lagrange2D,boscheri2019high,ArepoTN,boscheri2017arbitrary} to assess the accuracy of very high order numerical schemes.
The computational domain is $\Omega := [-10,10]\times [-10,10]$, and we assume periodic boundary conditions. 
The vortex is initially centered in the origin, $\uvec{x}_c:=(x_c,y_c)^T=(0,0)^T$, and translates with background speed $\uvec{v}_\infty:=(u_\infty,v_\infty)^T=(1,1)^T$. 
The initial condition of the vortex in terms of primitive variables, in the generic point $\uvec{x}=(x,y)^T$ of the space domain, can be described using the radial coordinate $r(\uvec{x}):=\norm{\uvec{x}-\uvec{x}_c}_2$, expressing the distance from the center of the vortex, and reads
\begin{equation}\label{eq:vortex}
	\begin{cases}
		\rho(\uvec{x},0) := (1+\delta \Temp)^{\frac{1}{\gamma-1}},\\
		\uvec{v}(\uvec{x},0):=\uvec{v}_\infty+\frac{\beta}{2\pi}e^{\frac{1-r^2}{2}} \begin{pmatrix}
			-(y-y_c)\\(x-x_c)
		\end{pmatrix},	\\
		p(\uvec{x},0) := (1+\delta \Temp)^{\frac{\gamma}{\gamma-1}},
	\end{cases}
	\quad \delta \Temp := -\frac{(\gamma-1)\beta^2}{8\gamma\pi^2}e^{1-r^2},
\end{equation}
with $\beta:=5$ and $\Temp=\frac{p}{\rho}$ temperature of the fluid. The exact solution at a generic point in space and time is given by $\uvec{u}(\uvec{x},t)=\uvec{u}(\uvec{x}-\uvec{v}_\infty t,0)$. 
We consider a final time $T_f:=0.1$. Let us notice that the vortex, despite being $C^{\infty}$, has not compact support. Therefore, the domain must be chosen big enough to avoid boundary effects, when periodic boundary conditions are assumed, depending on the expected or desired accuracy.
We ran convergence analyses over meshes with $N$ elements both in the $x$- and $y$-direction, and $C_{CFL}:=0.45$.
Also in this case, we will consider only the error on the density, but analogous results have been obtained for all other variables.

The expected convergence rate in the three considered norms has been obtained for all numerical fluxes and all orders, with a single exception: LxF with WENO3--DeC3 experienced simulation crashes in some refinements. This is actually in line with the theoretical analysis presented in~\cite{toro2000centred}. 
The von Neumann linear stability analysis carried in that reference shows that LxF and FORCE are unstable in a first order setting in multiple space dimensions due to their excessive numerical viscosity, however, nothing seems to be known regarding higher order extensions of these methods. 
Surprisingly, in our numerical experiments, we found that increasing the order of accuracy has a stabilizing effect: LxF seems to be stable for orders 5 and 7, while, FORCE seems to be stable for all investigated orders.
Let us notice that in~\cite{toro2009force,dumbser2010force}, linearly stable versions of these two fluxes have been proposed, whose investigation within the WENO--DeC framework is planned for future works.

\begin{table}[htbp]
	\centering
	\caption{Two--dimensional Euler equations, Smooth isentropic unsteady vortex: convergence tables for WENO3--DeC3}
	\label{tab:Euler_2d_unsteady_vortex_convergence_table_WENO3_DeC3}
	\scalebox{0.65}{ 
		\begin{tabular}{c c c c c c c c}
			\toprule
			\multirow{2}{*}{$N$} & \multicolumn{2}{c}{$L^1$ error $\rho$} & \multicolumn{2}{c}{$L^2$ error $\rho$} & \multicolumn{2}{c}{$L^{\infty}$ error $\rho$} & \multirow{2}{*}{CPU Time} \\
			\cmidrule(lr){2-3} \cmidrule(lr){4-5} \cmidrule(lr){6-7}
			& Error & Order & Error & Order & Error & Order & \\
			\midrule
			
			\multicolumn{8}{c}{\textbf{LxF}} \\ 
			\midrule
			160  &   6.886e-01  &  $-$  &   2.598e-01  &  $-$  &   2.786e-01  &  $-$  &   1.309e+01 \\ 
			320  &   $-$  &  $-$  &   $-$  &  $-$  &   $-$  &  $-$  &   $-$ \\ 
			640  &   $-$  &  $-$  &   $-$  &  $-$  &   $-$  &  $-$  &   $-$ \\ 
			1280  &   8.154e-05  &    $-$  &   4.170e-05  &    $-$  &   9.915e-05  &  $-$  &   4.185e+03 \\ 
			2560  &   1.484e-05  &  2.458  &   2.234e-06  &  4.223  &   3.917e-06  &  4.662  &   3.090e+04 \\ 
			5120  &   $-$  &  $-$  &   $-$  &  $-$  &   $-$  &  $-$  &   $-$ \\ 
			\midrule

			\multicolumn{8}{c}{\textbf{FORCE}} \\ 
			\midrule
			160  &   2.846e-02  &  $-$  &   1.248e-02  &  $-$  &   2.337e-02  &  $-$  &   1.252e+01 \\ 
			320  &   5.314e-03  &  2.421  &   2.603e-03  &  2.261  &   6.491e-03  &  1.848  &   7.209e+01 \\ 
			640  &   6.753e-04  &  2.976  &   3.643e-04  &  2.837  &   9.611e-04  &  2.756  &   5.546e+02 \\ 
			1280  &   4.199e-05  &  4.007  &   2.158e-05  &  4.077  &   5.082e-05  &  4.241  &   3.996e+03 \\ 
			2560  &   2.217e-06  &  4.243  &   1.013e-06  &  4.412  &   2.045e-06  &  4.635  &   3.034e+04 \\ 
			5120  &   1.123e-07  &  4.303  &   4.743e-08  &  4.417  &   7.525e-08  &  4.764  &   2.452e+05 \\ 
			\midrule

			\multicolumn{8}{c}{\textbf{Rus}} \\ 
			\midrule
			160  &   1.329e-02  &  $-$  &   5.679e-03  &  $-$  &   1.047e-02  &  $-$  &   1.706e+01 \\ 
			320  &   2.999e-03  &  2.148  &   1.494e-03  &  1.927  &   3.512e-03  &  1.576  &   1.373e+02 \\ 
			640  &   3.725e-04  &  3.009  &   2.057e-04  &  2.860  &   4.517e-04  &  2.959  &   8.407e+02 \\ 
			1280  &   2.506e-05  &  3.894  &   1.301e-05  &  3.983  &   2.785e-05  &  4.020  &   5.598e+03 \\ 
			2560  &   1.387e-06  &  4.175  &   6.376e-07  &  4.351  &   1.207e-06  &  4.529  &   5.376e+04 \\ 
			5120  &   7.068e-08  &  4.295  &   2.984e-08  &  4.417  &   4.474e-08  &  4.753  &   3.725e+05 \\ 
			\midrule

			\multicolumn{8}{c}{\textbf{HLL}} \\ 
			\midrule
			160  &   8.997e-03  &  $-$  &   3.641e-03  &  $-$  &   6.570e-03  &  $-$  &   1.280e+01 \\ 
			320  &   1.843e-03  &  2.288  &   9.329e-04  &  1.965  &   2.320e-03  &  1.502  &   7.456e+01 \\ 
			640  &   2.109e-04  &  3.127  &   1.197e-04  &  2.962  &   3.586e-04  &  2.694  &   5.771e+02 \\ 
			1280  &   1.375e-05  &  3.939  &   6.977e-06  &  4.101  &   1.907e-05  &  4.233  &   4.156e+03 \\ 
			2560  &   7.709e-07  &  4.157  &   3.355e-07  &  4.378  &   6.809e-07  &  4.808  &   3.142e+04 \\ 
			5120  &   4.265e-08  &  4.176  &   1.595e-08  &  4.394  &   2.389e-08  &  4.833  &   2.493e+05 \\ 
			\midrule

			\multicolumn{8}{c}{\textbf{CU}} \\ 
			\midrule
			160  &   8.997e-03  &  $-$  &   3.640e-03  &  $-$  &   6.567e-03  &  $-$  &   1.258e+01 \\ 
			320  &   1.842e-03  &  2.288  &   9.329e-04  &  1.964  &   2.320e-03  &  1.501  &   7.280e+01 \\ 
			640  &   2.109e-04  &  3.127  &   1.197e-04  &  2.962  &   3.586e-04  &  2.694  &   5.626e+02 \\ 
			1280  &   1.375e-05  &  3.939  &   6.977e-06  &  4.101  &   1.907e-05  &  4.233  &   4.121e+03 \\ 
			2560  &   7.709e-07  &  4.157  &   3.355e-07  &  4.378  &   6.809e-07  &  4.808  &   3.086e+04 \\ 
			5120  &   4.265e-08  &  4.176  &   1.595e-08  &  4.394  &   2.389e-08  &  4.833  &   2.440e+05 \\ 
			\midrule

			\multicolumn{8}{c}{\textbf{LDCU}} \\ 
			\midrule
			160  &   8.691e-03  &  $-$  &   3.535e-03  &  $-$  &   6.387e-03  &  $-$  &   1.277e+01 \\ 
			320  &   1.793e-03  &  2.277  &   9.158e-04  &  1.948  &   2.304e-03  &  1.471  &   7.469e+01 \\ 
			640  &   2.045e-04  &  3.132  &   1.176e-04  &  2.961  &   3.560e-04  &  2.694  &   5.765e+02 \\ 
			1280  &   1.318e-05  &  3.955  &   6.734e-06  &  4.126  &   1.874e-05  &  4.248  &   4.171e+03 \\ 
			2560  &   7.364e-07  &  4.162  &   3.196e-07  &  4.397  &   6.568e-07  &  4.835  &   3.150e+04 \\ 
			5120  &   4.070e-08  &  4.177  &   1.504e-08  &  4.409  &   2.320e-08  &  4.823  &   2.500e+05 \\ 
			\midrule

			\multicolumn{8}{c}{\textbf{HLLC}} \\ 
			\midrule
			160  &   8.927e-03  &  $-$  &   3.636e-03  &  $-$  &   6.558e-03  &  $-$  &   1.282e+01 \\ 
			320  &   1.826e-03  &  2.289  &   9.322e-04  &  1.964  &   2.319e-03  &  1.500  &   7.600e+01 \\ 
			640  &   2.068e-04  &  3.142  &   1.194e-04  &  2.965  &   3.586e-04  &  2.693  &   5.907e+02 \\ 
			1280  &   1.343e-05  &  3.945  &   6.940e-06  &  4.105  &   1.908e-05  &  4.232  &   4.274e+03 \\ 
			2560  &   7.559e-07  &  4.151  &   3.342e-07  &  4.376  &   6.814e-07  &  4.807  &   3.227e+04 \\ 
			5120  &   4.226e-08  &  4.161  &   1.603e-08  &  4.382  &   2.396e-08  &  4.830  &   2.554e+05 \\ 
			\midrule

			\multicolumn{8}{c}{\textbf{Ex.RS}} \\ 
			\midrule
			160  &   8.934e-03  &  $-$  &   3.640e-03  &  $-$  &   6.569e-03  &  $-$  &   1.776e+01 \\ 
			320  &   1.826e-03  &  2.291  &   9.321e-04  &  1.965  &   2.319e-03  &  1.502  &   1.406e+02 \\ 
			640  &   2.068e-04  &  3.142  &   1.194e-04  &  2.965  &   3.586e-04  &  2.693  &   8.609e+02 \\ 
			1280  &   1.343e-05  &  3.945  &   6.940e-06  &  4.105  &   1.908e-05  &  4.232  &   5.737e+03 \\ 
			2560  &   7.559e-07  &  4.151  &   3.342e-07  &  4.376  &   6.814e-07  &  4.807  &   5.618e+04 \\ 
			5120  &   4.226e-08  &  4.161  &   1.603e-08  &  4.382  &   2.395e-08  &  4.830  &   3.750e+05 \\ 
			\midrule

			\bottomrule
	\end{tabular}}
\end{table}

\begin{table}[htbp]
	\centering
	\caption{Two--dimensional Euler equations, Smooth isentropic unsteady vortex: convergence tables for WENO5--DeC5}
	\label{tab:Euler_2d_unsteady_vortex_convergence_table_WENO5_DeC5}
	\scalebox{0.65}{ 
		\begin{tabular}{c c c c c c c c}
			\toprule
			\multirow{2}{*}{$N$} & \multicolumn{2}{c}{$L^1$ error $\rho$} & \multicolumn{2}{c}{$L^2$ error $\rho$} & \multicolumn{2}{c}{$L^{\infty}$ error $\rho$} & \multirow{2}{*}{CPU Time} \\
			\cmidrule(lr){2-3} \cmidrule(lr){4-5} \cmidrule(lr){6-7}
			& Error & Order & Error & Order & Error & Order & \\
			\midrule
			
			\multicolumn{8}{c}{\textbf{LxF}} \\ 
			\midrule
			160  &   9.904e-04  &  $-$  &   4.217e-04  &  $-$  &   4.310e-04  &  $-$  &   5.802e+01 \\ 
			320  &   1.615e-05  &  5.938  &   6.439e-06  &  6.033  &   6.618e-06  &  6.025  &   3.487e+02 \\ 
			640  &   3.973e-07  &  5.346  &   1.489e-07  &  5.435  &   1.274e-07  &  5.699  &   2.801e+03 \\ 
			1280  &   9.058e-09  &  5.455  &   3.461e-09  &  5.426  &   3.204e-09  &  5.314  &   2.011e+04 \\ 
			2560  &   2.073e-10  &  5.449  &   7.816e-11  &  5.469  &   7.067e-11  &  5.503  &   1.519e+05 \\ 
			\midrule

			\multicolumn{8}{c}{\textbf{FORCE}} \\ 
			\midrule
			160  &   5.322e-04  &  $-$  &   2.259e-04  &  $-$  &   2.492e-04  &  $-$  &   5.808e+01 \\ 
			320  &   8.532e-06  &  5.963  &   3.391e-06  &  6.057  &   3.523e-06  &  6.144  &   3.422e+02 \\ 
			640  &   2.094e-07  &  5.349  &   7.854e-08  &  5.432  &   6.838e-08  &  5.687  &   2.711e+03 \\ 
			1280  &   4.790e-09  &  5.450  &   1.831e-09  &  5.422  &   1.751e-09  &  5.288  &   1.977e+04 \\ 
			2560  &   1.099e-10  &  5.446  &   4.142e-11  &  5.467  &   3.847e-11  &  5.508  &   1.516e+05 \\ 
			\midrule

			\multicolumn{8}{c}{\textbf{Rus}} \\ 
			\midrule
			160  &   2.374e-04  &  $-$  &   1.008e-04  &  $-$  &   1.429e-04  &  $-$  &   8.473e+01 \\ 
			320  &   4.614e-06  &  5.685  &   1.829e-06  &  5.783  &   2.009e-06  &  6.152  &   6.917e+02 \\ 
			640  &   1.119e-07  &  5.365  &   4.196e-08  &  5.446  &   4.115e-08  &  5.609  &   4.176e+03 \\ 
			1280  &   2.793e-09  &  5.325  &   1.068e-09  &  5.296  &   1.146e-09  &  5.167  &   2.775e+04 \\ 
			2560  &   6.746e-11  &  5.371  &   2.535e-11  &  5.396  &   2.658e-11  &  5.430  &   2.493e+05 \\ 
			\midrule

			\multicolumn{8}{c}{\textbf{HLL}} \\ 
			\midrule
			160  &   1.748e-04  &  $-$  &   7.990e-05  &  $-$  &   1.366e-04  &  $-$  &   5.984e+01 \\ 
			320  &   3.339e-06  &  5.710  &   1.363e-06  &  5.873  &   2.120e-06  &  6.010  &   3.677e+02 \\ 
			640  &   7.905e-08  &  5.401  &   2.982e-08  &  5.515  &   3.594e-08  &  5.882  &   2.912e+03 \\ 
			1280  &   1.912e-09  &  5.369  &   7.124e-10  &  5.387  &   8.342e-10  &  5.429  &   2.131e+04 \\ 
			2560  &   4.461e-11  &  5.422  &   1.608e-11  &  5.470  &   1.744e-11  &  5.580  &   1.611e+05 \\ 
			\midrule

			\multicolumn{8}{c}{\textbf{CU}} \\ 
			\midrule
			160  &   1.748e-04  &  $-$  &   7.990e-05  &  $-$  &   1.366e-04  &  $-$  &   5.764e+01 \\ 
			320  &   3.339e-06  &  5.710  &   1.363e-06  &  5.873  &   2.120e-06  &  6.010  &   3.510e+02 \\ 
			640  &   7.905e-08  &  5.401  &   2.982e-08  &  5.515  &   3.594e-08  &  5.882  &   2.802e+03 \\ 
			1280  &   1.912e-09  &  5.369  &   7.124e-10  &  5.387  &   8.342e-10  &  5.429  &   2.021e+04 \\ 
			2560  &   4.461e-11  &  5.422  &   1.608e-11  &  5.470  &   1.744e-11  &  5.580  &   1.533e+05 \\ 
			\midrule

			\multicolumn{8}{c}{\textbf{LDCU}} \\ 
			\midrule
			160  &   1.696e-04  &  $-$  &   7.871e-05  &  $-$  &   1.386e-04  &  $-$  &   6.005e+01 \\ 
			320  &   3.227e-06  &  5.715  &   1.327e-06  &  5.890  &   2.120e-06  &  6.031  &   3.671e+02 \\ 
			640  &   7.613e-08  &  5.406  &   2.883e-08  &  5.525  &   3.594e-08  &  5.882  &   2.895e+03 \\ 
			1280  &   1.833e-09  &  5.376  &   6.846e-10  &  5.396  &   8.342e-10  &  5.429  &   2.084e+04 \\ 
			2560  &   4.266e-11  &  5.425  &   1.541e-11  &  5.473  &   1.744e-11  &  5.580  &   1.569e+05 \\ 
			\midrule

			\multicolumn{8}{c}{\textbf{HLLC}} \\ 
			\midrule
			160  &   1.711e-04  &  $-$  &   7.877e-05  &  $-$  &   1.363e-04  &  $-$  &   6.781e+01 \\ 
			320  &   3.351e-06  &  5.674  &   1.362e-06  &  5.854  &   2.120e-06  &  6.006  &   3.753e+02 \\ 
			640  &   7.991e-08  &  5.390  &   3.014e-08  &  5.498  &   3.594e-08  &  5.882  &   2.981e+03 \\ 
			1280  &   1.942e-09  &  5.363  &   7.257e-10  &  5.376  &   8.342e-10  &  5.429  &   2.142e+04 \\ 
			2560  &   4.563e-11  &  5.411  &   1.657e-11  &  5.453  &   1.744e-11  &  5.580  &   1.631e+05 \\ 
			\midrule

			\multicolumn{8}{c}{\textbf{Ex.RS}} \\ 
			\midrule
			160  &   1.711e-04  &  $-$  &   7.876e-05  &  $-$  &   1.363e-04  &  $-$  &   8.918e+01 \\ 
			320  &   3.351e-06  &  5.674  &   1.362e-06  &  5.854  &   2.120e-06  &  6.006  &   7.257e+02 \\ 
			640  &   7.991e-08  &  5.390  &   3.014e-08  &  5.498  &   3.594e-08  &  5.882  &   4.428e+03 \\ 
			1280  &   1.942e-09  &  5.363  &   7.257e-10  &  5.376  &   8.342e-10  &  5.429  &   2.905e+04 \\ 
			2560  &   4.569e-11  &  5.409  &   1.657e-11  &  5.453  &   1.744e-11  &  5.580  &   2.642e+05 \\ 
			\midrule

			\bottomrule
	\end{tabular}}
\end{table}

\begin{table}[htbp]
	\centering
	\caption{Two--dimensional Euler equations, Smooth isentropic unsteady vortex: convergence tables for WENO7--DeC7}
	\label{tab:Euler_2d_unsteady_vortex_convergence_table_WENO7_DeC7}
	\scalebox{0.65}{ 
		\begin{tabular}{c c c c c c c c}
			\toprule
			\multirow{2}{*}{$N$} & \multicolumn{2}{c}{$L^1$ error $\rho$} & \multicolumn{2}{c}{$L^2$ error $\rho$} & \multicolumn{2}{c}{$L^{\infty}$ error $\rho$} & \multirow{2}{*}{CPU Time} \\
			\cmidrule(lr){2-3} \cmidrule(lr){4-5} \cmidrule(lr){6-7}
			& Error & Order & Error & Order & Error & Order & \\
			\midrule
			
			\multicolumn{8}{c}{\textbf{LxF}} \\ 
			\midrule
			160  &   5.232e-05  &  $-$  &   1.893e-05  &  $-$  &   2.270e-05  &  $-$  &   3.067e+02 \\ 
			320  &   2.091e-07  &  7.967  &   8.631e-08  &  7.777  &   9.729e-08  &  7.866  &   1.861e+03 \\ 
			640  &   1.024e-09  &  7.674  &   4.250e-10  &  7.666  &   6.531e-10  &  7.219  &   1.488e+04 \\ 
			1280  &   5.310e-12  &  7.591  &   2.175e-12  &  7.610  &   3.208e-12  &  7.669  &   1.081e+05 \\ 
			\midrule

			\multicolumn{8}{c}{\textbf{FORCE}} \\ 
			\midrule
			160  &   2.761e-05  &  $-$  &   1.014e-05  &  $-$  &   1.257e-05  &  $-$  &   3.094e+02 \\ 
			320  &   1.093e-07  &  7.981  &   4.517e-08  &  7.810  &   5.057e-08  &  7.958  &   1.835e+03 \\ 
			640  &   5.364e-10  &  7.671  &   2.238e-10  &  7.657  &   3.566e-10  &  7.148  &   1.455e+04 \\ 
			1280  &   2.826e-12  &  7.568  &   1.135e-12  &  7.623  &   1.694e-12  &  7.718  &   1.057e+05 \\ 
			\midrule

			\multicolumn{8}{c}{\textbf{Rus}} \\ 
			\midrule
			160  &   1.232e-05  &  $-$  &   4.622e-06  &  $-$  &   6.184e-06  &  $-$  &   4.177e+02 \\ 
			320  &   6.129e-08  &  7.651  &   2.524e-08  &  7.517  &   3.064e-08  &  7.657  &   3.463e+03 \\ 
			640  &   2.999e-10  &  7.675  &   1.272e-10  &  7.633  &   2.375e-10  &  7.011  &   2.111e+04 \\ 
			1280  &   1.725e-12  &  7.442  &   6.918e-13  &  7.523  &   1.204e-12  &  7.623  &   1.369e+05 \\ 
			\midrule

			\multicolumn{8}{c}{\textbf{HLL}} \\ 
			\midrule
			160  &   8.704e-06  &  $-$  &   3.632e-06  &  $-$  &   5.512e-06  &  $-$  &   3.085e+02 \\ 
			320  &   4.165e-08  &  7.707  &   1.818e-08  &  7.642  &   2.653e-08  &  7.699  &   1.899e+03 \\ 
			640  &   2.038e-10  &  7.675  &   8.667e-11  &  7.712  &   1.574e-10  &  7.397  &   1.502e+04 \\ 
			1280  &   1.163e-12  &  7.453  &   4.423e-13  &  7.615  &   6.839e-13  &  7.846  &   1.085e+05 \\ 
			\midrule

			\multicolumn{8}{c}{\textbf{CU}} \\ 
			\midrule
			160  &   8.704e-06  &  $-$  &   3.632e-06  &  $-$  &   5.512e-06  &  $-$  &   3.716e+02 \\ 
			320  &   4.165e-08  &  7.707  &   1.818e-08  &  7.642  &   2.653e-08  &  7.699  &   2.247e+03 \\ 
			640  &   2.038e-10  &  7.675  &   8.667e-11  &  7.712  &   1.574e-10  &  7.397  &   1.787e+04 \\ 
			1280  &   1.163e-12  &  7.453  &   4.423e-13  &  7.615  &   6.835e-13  &  7.847  &   1.085e+05 \\ 
			\midrule

			\multicolumn{8}{c}{\textbf{LDCU}} \\ 
			\midrule
			160  &   8.373e-06  &  $-$  &   3.580e-06  &  $-$  &   5.480e-06  &  $-$  &   3.114e+02 \\ 
			320  &   3.983e-08  &  7.716  &   1.772e-08  &  7.659  &   2.653e-08  &  7.690  &   1.901e+03 \\ 
			640  &   1.948e-10  &  7.675  &   8.363e-11  &  7.727  &   1.574e-10  &  7.397  &   1.494e+04 \\ 
			1280  &   1.108e-12  &  7.458  &   4.222e-13  &  7.630  &   6.850e-13  &  7.844  &   1.099e+05 \\ 
			\midrule

			\multicolumn{8}{c}{\textbf{HLLC}} \\ 
			\midrule
			160  &   8.236e-06  &  $-$  &   3.560e-06  &  $-$  &   5.394e-06  &  $-$  &   3.179e+02 \\ 
			320  &   3.956e-08  &  7.702  &   1.765e-08  &  7.656  &   2.653e-08  &  7.668  &   1.914e+03 \\ 
			640  &   1.949e-10  &  7.665  &   8.401e-11  &  7.715  &   1.574e-10  &  7.397  &   1.524e+04 \\ 
			1280  &   1.155e-12  &  7.398  &   4.353e-13  &  7.592  &   6.825e-13  &  7.849  &   1.108e+05 \\ 
			\midrule

			\multicolumn{8}{c}{\textbf{Ex.RS}} \\ 
			\midrule
			160  &   8.236e-06  &  $-$  &   3.560e-06  &  $-$  &   5.394e-06  &  $-$  &   4.513e+02 \\ 
			320  &   3.956e-08  &  7.702  &   1.765e-08  &  7.656  &   2.653e-08  &  7.668  &   3.588e+03 \\ 
			640  &   1.950e-10  &  7.665  &   8.401e-11  &  7.715  &   1.574e-10  &  7.397  &   2.116e+04 \\ 
			1280  &   1.172e-12  &  7.378  &   4.354e-13  &  7.592  &   6.845e-13  &  7.845  &   1.393e+05 \\ 
			\midrule

			\bottomrule
	\end{tabular}}
\end{table}

Convergence tables reporting errors and computational times are reported in Tables~\ref{tab:Euler_2d_unsteady_vortex_convergence_table_WENO3_DeC3},~\ref{tab:Euler_2d_unsteady_vortex_convergence_table_WENO5_DeC5}, and~\ref{tab:Euler_2d_unsteady_vortex_convergence_table_WENO7_DeC7}.
The graphical representations of the convergence analysis and of the efficiency analysis in $L^1$-norm are reported in Figure~\ref{fig:Euler_2d_unsteady_vortex_WENODeC}, on the left and on the right side respectively.
As in the one--dimensional case, graphical representations of the errors in other norms are omitted because they are rather similar.
Looking at the left plot, we see how all schemes achieve the formal order of accuracy besides LxF with WENO3--DeC3, as already mentioned.
One can see that LxF, FORCE and Rus are more diffusive than the other numerical fluxes, and produce higher errors for fixed refinement for all orders.
Among these three numerical fluxes, LxF is the worst, Rus is the best.
All other numerical fluxes perform similarly.

Looking at the efficiency comparison on the right, we see more variety with respect to the one--dimensional case.
Such a difference may be due to the fact that, in this case, we deal with a more involved problem with respect to the simple advection of a density profile.
The best performance, in terms of error versus time, are achieved by HLL, CU, LDCU and HLLC, which give similar results. They are followed by Ex.RS, FORCE and Rus. LxF is the worst option also in this context.

As in the one--dimensional case, however, the choice with more impact on the errors is the one concerning the order of accuracy of the discretization. Higher order schemes result more convenient than lower order ones, allowing to achieve smaller errors within coarser discretizations and smaller computational times, for all numerical fluxes.

\begin{figure}[htbp]
	\centering
	\begin{subfigure}[t]{0.53\textwidth}
		\centering
		\includegraphics[width=\textwidth]{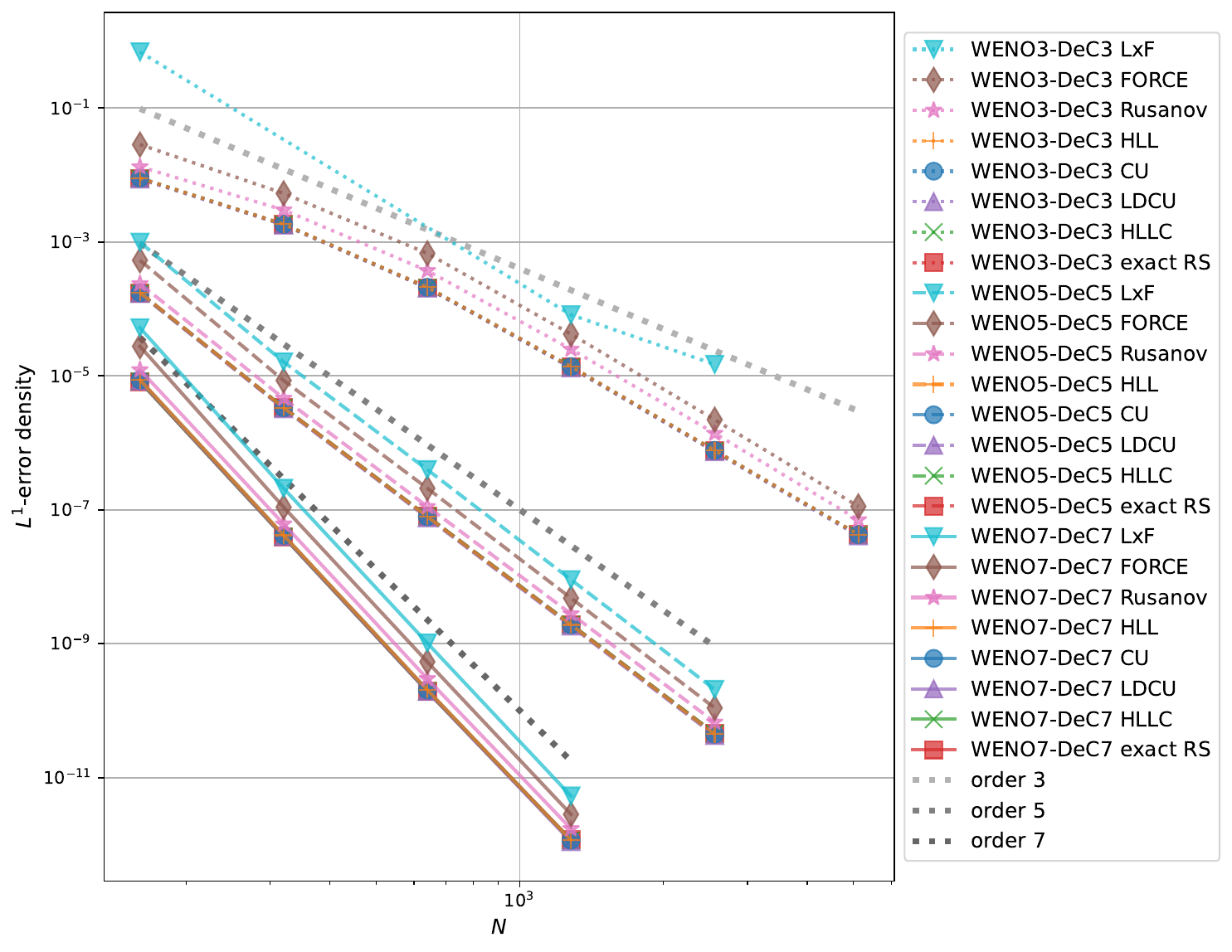}
		\caption{Convergence analysis on the density}
	\end{subfigure}
	\quad
	\begin{subfigure}[t]{0.43\textwidth}
		\centering
		\includegraphics[width=\textwidth]{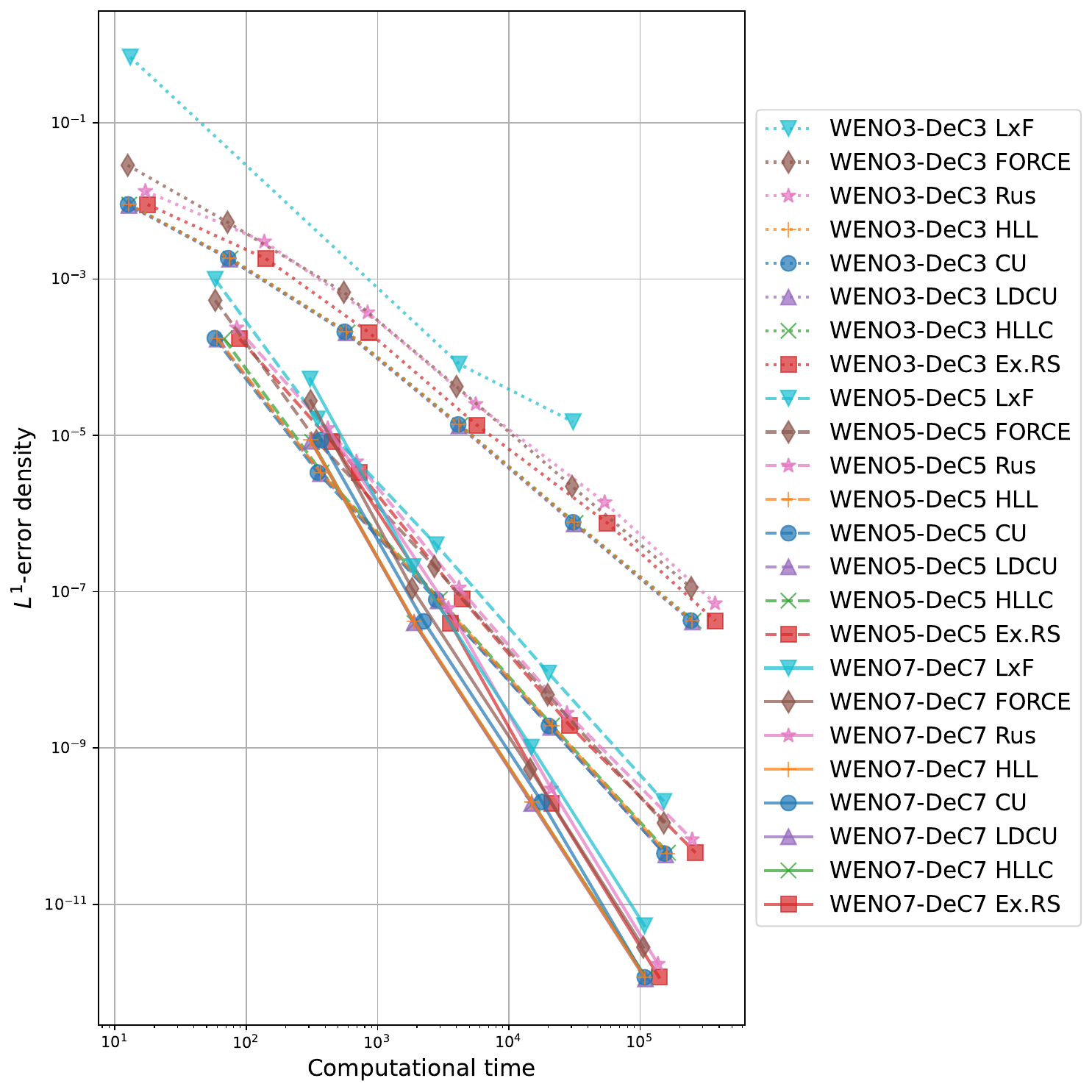}
		\caption{Error on the density versus computational time}
	\end{subfigure}
	\caption{Two--dimensional Euler equations, Smooth isentropic unsteady vortex:  Convergence analysis and efficiency analysis for all numerical fluxes and orders}
	\label{fig:Euler_2d_unsteady_vortex_WENODeC}
\end{figure}

\subsubsection{Long--time evolution of smooth isentropic unsteady vortex}\label{sec:Euler_2d_unsteady_vortex_longer_time}
In this section, we consider the same vortex presented in the previous one, but on the smaller computational domain $\Omega:=[-5,5]\times[-5,5]$, as we are not interested in machine precision effects.
We keep assuming periodic boundary conditions.
We want to asses the differences of the numerical fluxes under investigations on long--time evolution problems.
Numerical fluxes are well known to influence the diffusion of the resulting schemes, which is a crucial aspect in problems of this type. 
We expect more diffusive numerical fluxes, e.g., LxF, FORCE and Rus, to perform worse than less diffusive ones, e.g., HLLC and Ex.RS.
The density results obtained for all numerical fluxes for order 3, 5 and 7, at final times $T_f:=100$, $400$ and $1600$ respectively, are reported in Figure~\ref{fig:Euler_2d_unsteady_vortex_longer_time}. 
In particular, we report a one--dimensional slice of the two--dimensional density profile along the diagonal of the domain $y=x$.

For order 3, the line corresponding to LxF is absent, as the related simulation crashed. Again, we remark that LxF and FORCE are unstable in two--space dimensions for low order discretizations~\cite{toro2000centred}, however, in our numerical experiments they seem to be stable for high order ones: LxF from order 5 on, FORCE for all investigated orders.

For any order, we can see that, as expected, the worst results are the ones obtained with LxF, FORCE and Rus.
The best results are instead obtained with HLLC and Ex.RS for orders 3 and 5, with LDCU for order 7.
HLL and CU give the same results, as they are identical, even though here they are used with different wave speed  estimates.

Let us notice that, also in this case, the order of accuracy of the scheme plays a crucial role in the quality of the results. 
For order 3, results are rather diffusive, in fact, the density profile is smeared to around 20\% of its initial amplitude for the less diffusive numerical fluxes at final time $T_f:=100$.
For order 5, with final time four times longer, $T_f:=400$, results are much better, and the initial profile is only smeared to around 70\% of its initial amplitude for the less diffusive numerical fluxes.
For order 7, with final time four times longer, $T_f:=1600$, results are even better, with the profile only smeared to around 85\% of its initial amplitude for the less diffusive numerical fluxes.
Indeed also the results of more diffusive numerical fluxes improve increasing the order of accuracy of the discretization.

\begin{figure}[htbp]
	\centering
	\begin{subfigure}[b]{0.48\textwidth}
		\centering
		\includegraphics[width=\textwidth]{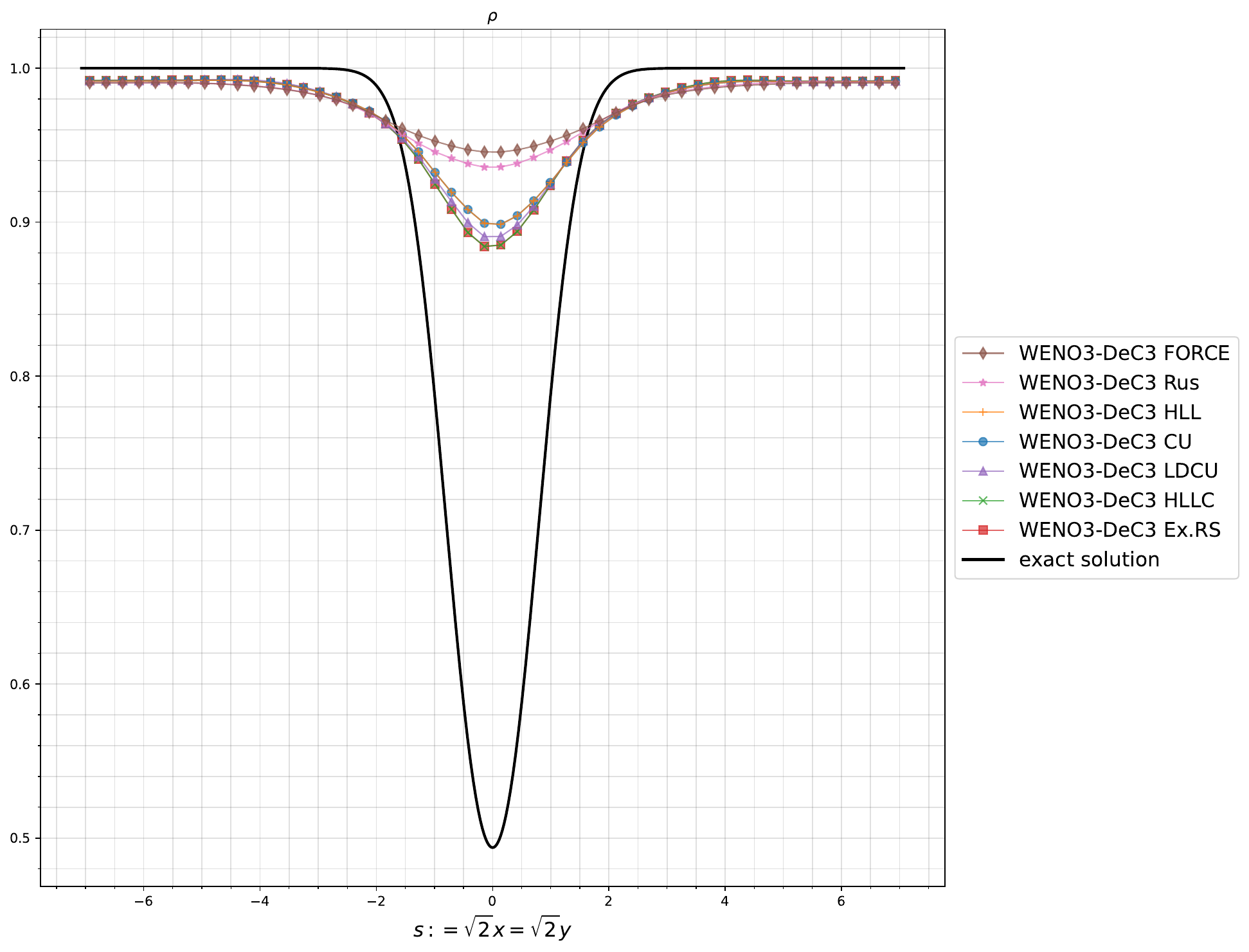}
		\caption{Order 3 with $T_f:=100$}
	\end{subfigure}\\
	\begin{subfigure}[b]{0.48\textwidth}
		\centering
		\includegraphics[width=\textwidth]{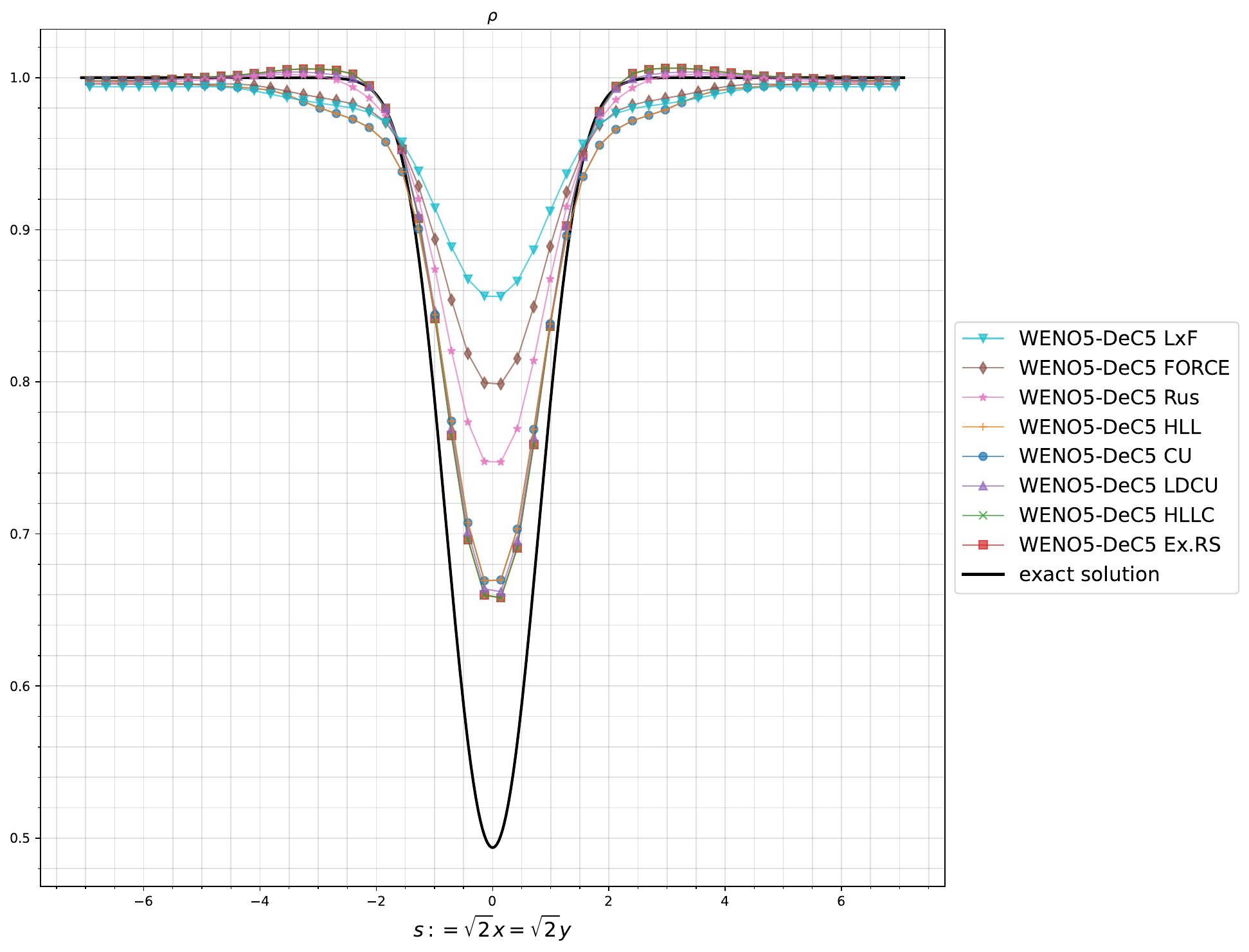}
		\caption{Order 5 with $T_f:=400$}
	\end{subfigure}
	\\
	\begin{subfigure}[b]{0.48\textwidth}
		\centering
		\includegraphics[width=\textwidth]{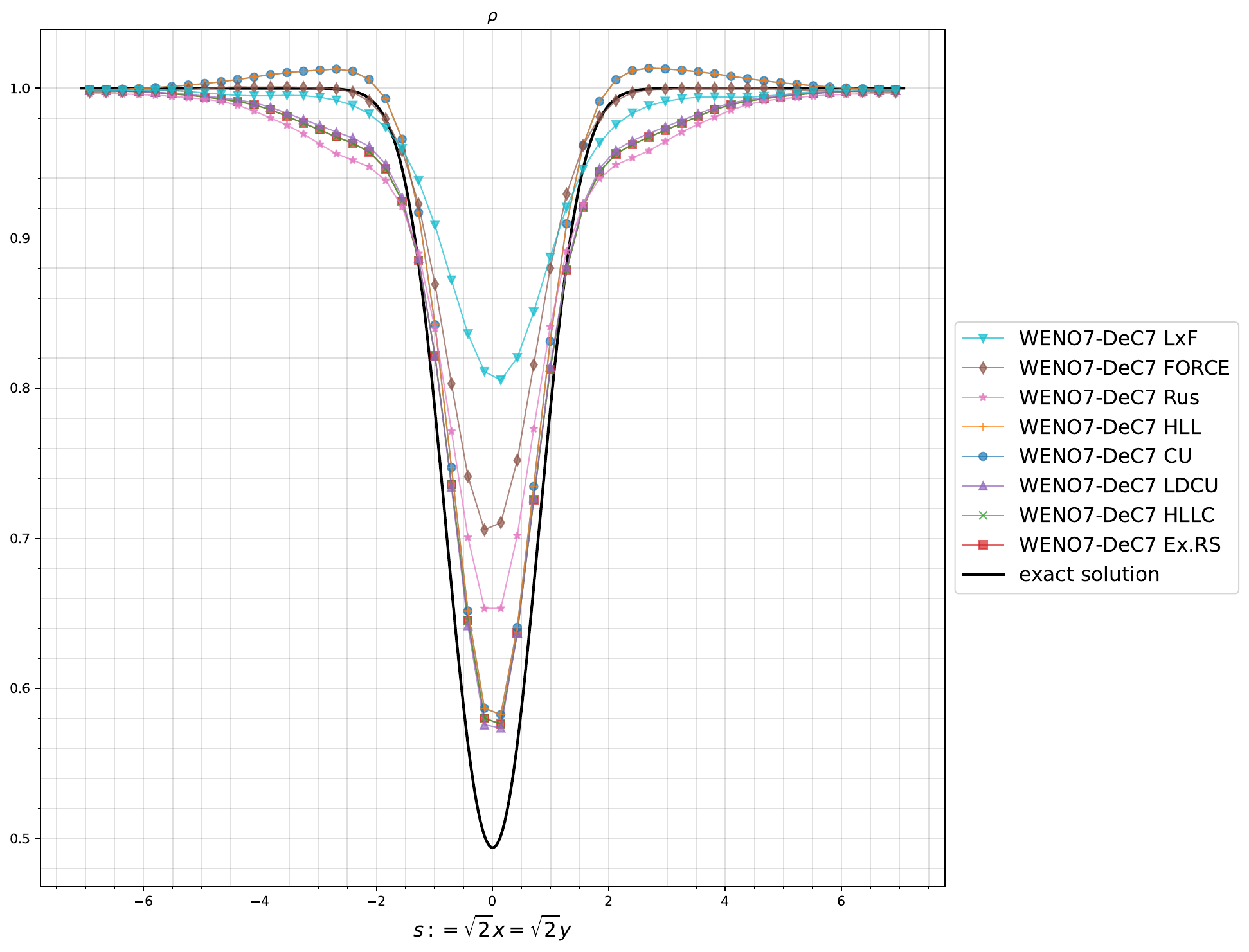}
		\caption{Order 7 with $T_f:=1600$}
	\end{subfigure}
	\caption{Two--dimensional Euler equations, Long--time evolution of smooth isentropic unsteady vortex: Slice of the density profile along the diagonal of the domain $y=x$ obtained with all numerical fluxes for different orders with $C_{CFL}:=0.45$ at different times}
	\label{fig:Euler_2d_unsteady_vortex_longer_time}
\end{figure}

\subsubsection{Explosion problem}\label{sec:Euler_2d_explosion_problem}
This test is taken from~\cite{ToroBook} and it can be regarded as the multidimensional radial version of the Sod shock tube~\cite{sod1978survey}.
On the computational domain $\Omega:=[-1,1]\times[-1,1]$, we consider the following initial conditions
\begin{align}
	\begin{pmatrix}
		\rho\\
		u\\
		v\\
		p
	\end{pmatrix}(x,y,0):=\begin{cases}
		(1,0,0,1)^T, \quad  &\text{if}~r:=\sqrt{x^2+y^2}< 0.4,\\
		(0.125,0,0,0.1)^T, \quad &\text{otherwise}.
	\end{cases} 
	\label{eq:Euler_2d_sod_IC}
\end{align}
We consider transmissive boundary conditions and a final time $T_f:=0.25$.
In Figure~\ref{fig:Euler_2d_explosion_problem}, we report the density obtained with WENO--DeC for all numerical fluxes and orders on computational meshes of $50\times 50$ cells with $C_{CFL}:=0.45$.
In particular, as for the previous test, we provide a one--dimensional slice of the two--dimensional numerical solution on the domain diagonal $y=x$.
We can see that also in this case LxF results are missing for order 3 due to its (expected) instability~\cite{toro2000centred}, however, FORCE results for all orders and LxF results for orders 5 and 7 are surprisingly present.

More or less, all numerical fluxes give similar behavior for fixed order with the exception of LxF and FORCE, which give worse results especially for order 3 and 5.
Moreover, as already remarked, LxF is unstable for order 3.
Also in this test, we see benefits for all numerical fluxes in increasing the order of accuracy of the adopted discretization. In particular, for order 7, results obtained through all numerical fluxes are very similar.

\begin{figure}[htbp]
	\centering
	\begin{subfigure}[b]{0.48\textwidth}
		\centering
		\includegraphics[width=\textwidth]{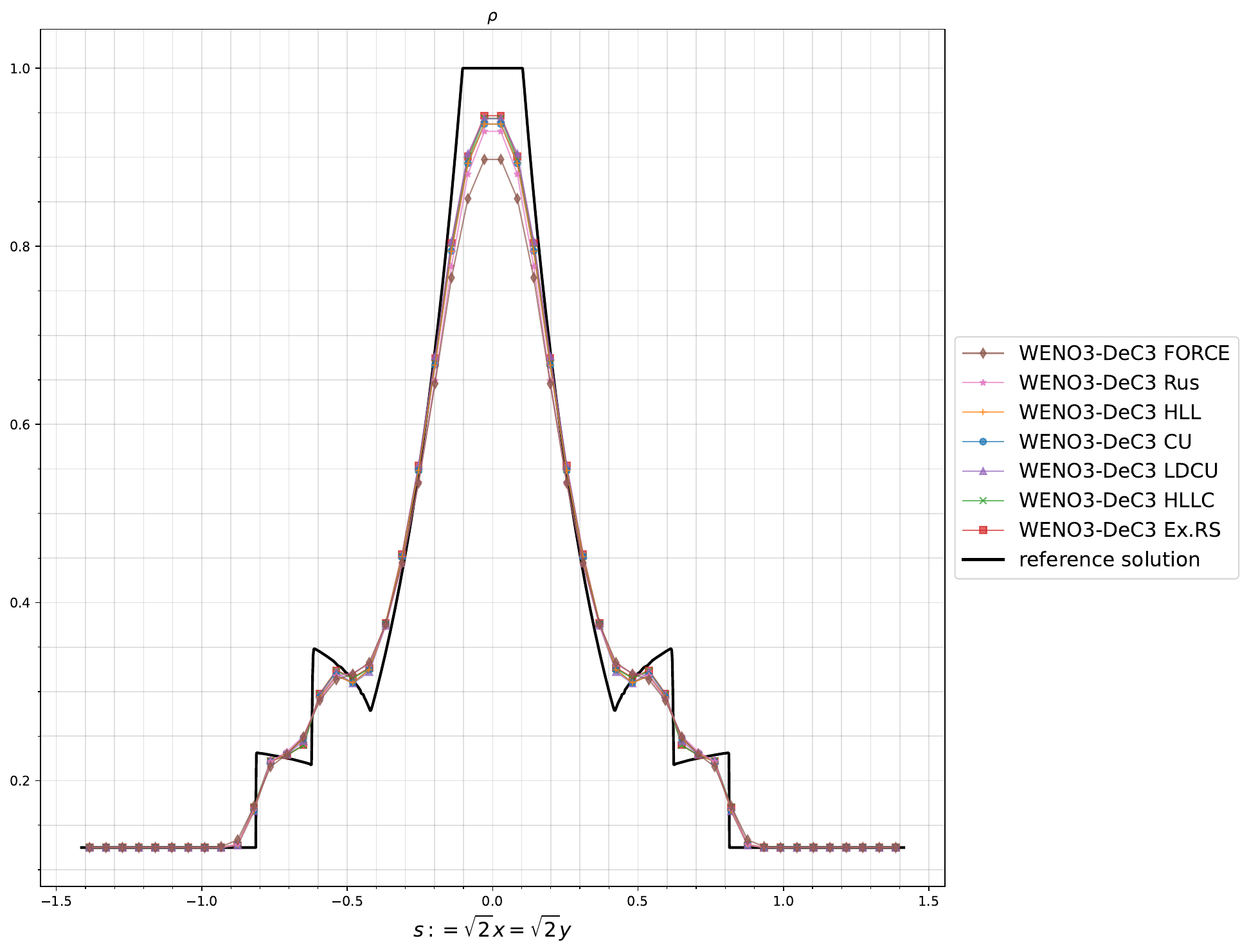}
		\caption{Order 3}
	\end{subfigure}\\
	\begin{subfigure}[b]{0.48\textwidth}
		\centering
		\includegraphics[width=\textwidth]{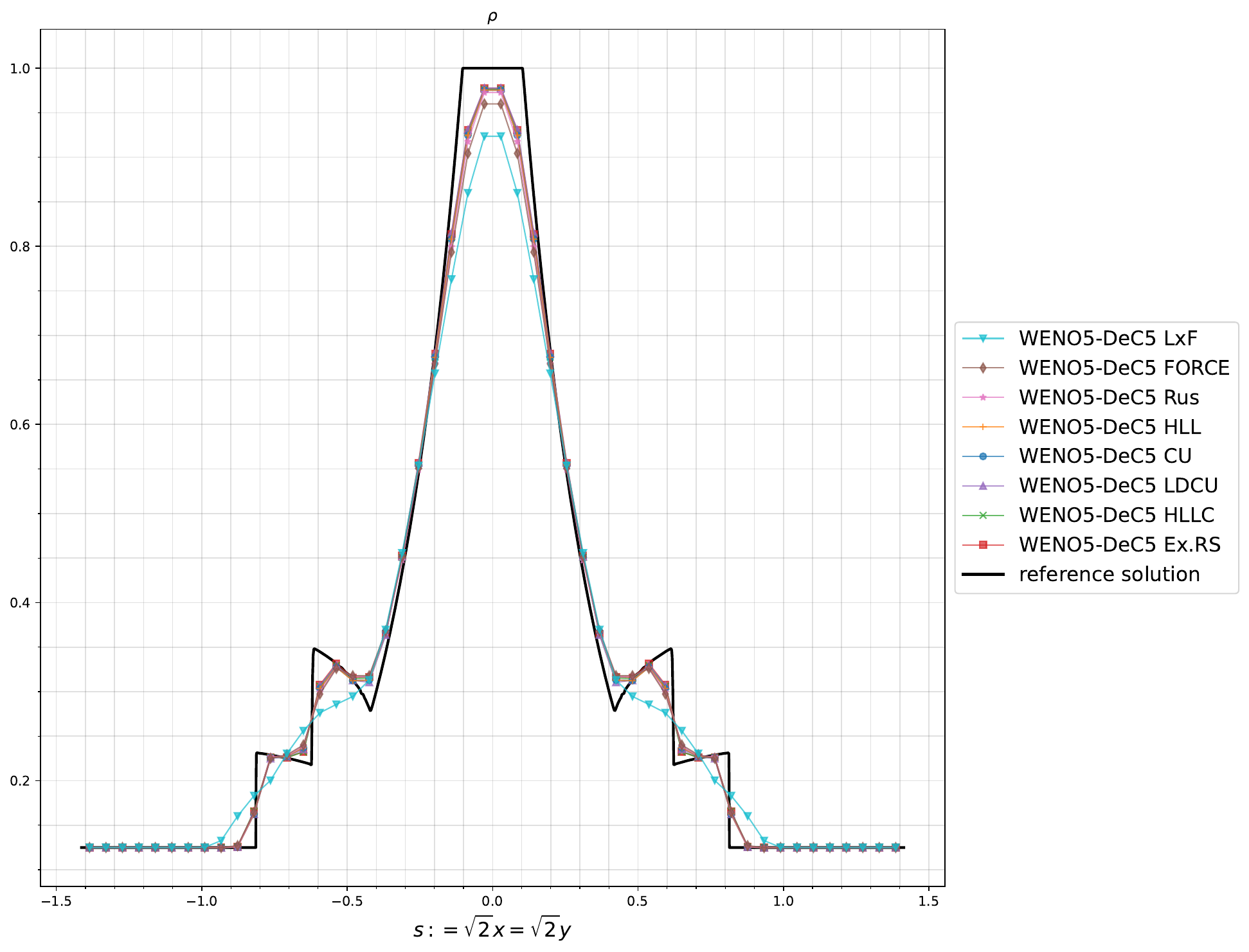}
		\caption{Order 5}
	\end{subfigure}
	\\
	\begin{subfigure}[b]{0.48\textwidth}
		\centering
		\includegraphics[width=\textwidth]{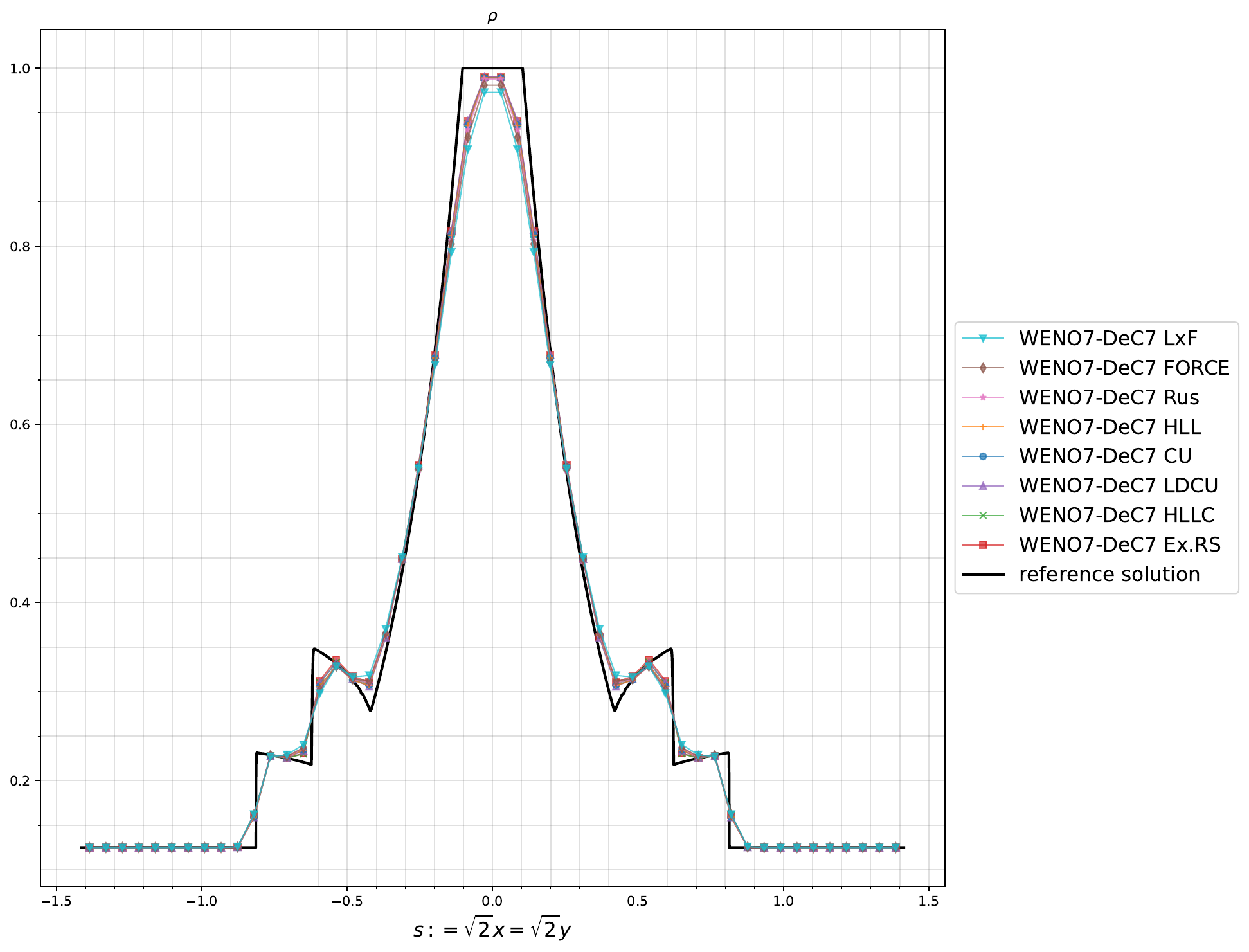}
		\caption{Order 7}
	\end{subfigure}
	\caption{Two--dimensional Euler equations, Explosion problem: Slice of the density profile along the diagonal of the domain $y=x$ obtained with all numerical fluxes for different orders with $C_{CFL}:=0.45$}
	\label{fig:Euler_2d_explosion_problem}
\end{figure}

\section{Conclusions and future perspectives}\label{sec:conclusions}
In this work, we have investigated the role of the numerical flux in an arbitrary high order semidiscrete finite volume (FV) framework, obtained through Weighted Essentially Non--Oscillatory (WENO) space reconstruction and Deferred Correction (DeC) time discretization.
We considered 8 numerical fluxes: Lax--Friedrichs (LxF)~\cite{lax1954weak}, First--Order Centred (FORCE)~\cite{Toro1996,toro2000centred,chen2003centred}, Rusanov~\cite{Rusanov1961}~(Rus), Harten--Lax--van Leer (HLL)~\cite{harten1983upstream}, Central--Upwind (CU)~\cite{kurganov2001semidiscrete,kurganov2000new}, Low--Dissipation Central--Upwind (LDCU)~\cite{kurganov2023new}, HLLC~\cite{toro1992restoration,toro1994restoration}, exact Riemann solver~\cite{Godunov}~(Ex.RS).
In particular, we have focused on order of accuracy up to 7.
We tested the methods on several benchmarks for one--dimensional and two--dimensional Euler equations.
Numerical results confirm the strong impact of the choice of the numerical flux in particular tests, where HLLC and Ex.RS outperform all other competitors and LxF, FORCE and Rus give the worst results.
However, we have also noticed that increasing the order of accuracy reduces the differences among the results obtained through different numerical fluxes.
Nonetheless, in many cases, within the range of investigated orders, this is not sufficient to bring the quality of the results obtained through more diffusive numerical fluxes to the level of the results obtained through more sophisticated ones.
We plan to investigate higher order realizations of WENO--DeC in future works.

Very interesting results have been obtained in the two--dimensional setting, where LxF and FORCE were expected to be unstable.
However, LxF appears to be stable for orders 5 and 7, while, FORCE appears to be stable for all investigated orders. 
Other investigations are planned in this direction, as these two centred numerical fluxes (and related ones) are good candidates for complex applications, where Riemann solvers may not available.

\subsubsection*{Acknowledgments}
LM has been funded by the LeRoy B. Martin, Jr. Distinguished Professorship Foundation. 
EFT gratefully acknowledges the support from the ``Special Programme on Numerical Methods for Non-linear Hyperbolic Partial Differential Equations'', sponsored by the Department of Mathematics and the Shenzhen International Center for Mathematics, Southern University of Science and Technology, Shenzhen, People Republic of China.
Both the authors would like to express gratitude to Alina Chertock, Alexander Kurganov and Chi--Wang Shu, the organizers of the Workshop on Numerical Methods for Shallow Water Models, held in Shenzhen, May 11-15 2024, which provided the initial spark for the development of this work.
Furthermore, the authors acknowledge Rémi Abgrall for granting access to the computational resources of the University of Zurich.

%
%


\printbibliography

\end{document}